\theoremstyle{definition}
\newtheorem{thm}{Theorem}[section]
\newtheorem{lem}[thm]{Lemma}
\newtheorem{defin}[thm]{Definition}
\newtheorem{rem}[thm]{Remark}
\numberwithin{equation}{section}
\newcommand{\subjclass}[1]{\bigskip\noindent\emph{2010 Mathematics Subject Classification:}\enspace#1}
\newcommand{\keywords}[1]{\noindent\emph{Keywords:}\enspace#1}
\newtheorem{assumption}[thm]{Assumption}
\providecommand{\charf}[1]{\chi_{#1}}
\providecommand{\Naturals}{\mathbb{N}}
\providecommand{\Reals}{\mathbb{R}}
\providecommand{\set}[1]{\{#1\}}
\providecommand{\Set}[1]{\left\{#1\right\}}
\providecommand*{\diff}[2][]{\frac{\ud{#1}}{\ud{#2}}}
\DeclareMathOperator{\Div}{\mathrm{div}}
\providecommand{\Laplace}{\Delta}
\providecommand*{\ndiff}[1][]{\pdiff[#1]{\normal}}
\providecommand*{\pdiff}[2][]{\frac{\partial #1}{\partial #2}}
\providecommand*{\ud}[1]{\mathop{\mathrm{d}#1}}
\providecommand{\bdry}{\partial}
\providecommand{\Hausdorff}{\mathcal{H}}
\providecommand{\Lebesgue}{\mathcal{L}}
\providecommand{\normal}{\mathbf{n}}
\providecommand{\downto}{\downarrow}
\providecommand{\weaklyto}{\rightharpoonup}
\providecommand{\weakstar}{weak*}
\providecommand{\weakstarto}{\overset{\ast}{\rightharpoonup}}
\providecommand{\eps}{\varepsilon}
\providecommand{\BV}{\mathnormal{BV}}
\providecommand{\loc}{\mathnormal{loc}}
\providecommand{\id}{\mathrm{id}}
\DeclareMathOperator{\tr}{\mathrm{tr}}
\providecommand{\diffusion}{\kappa}
\providecommand{\osmosis}{\beta}
\providecommand{\surfacetension}{\gamma}
\providecommand{\cells}{\mathcal{C}}
\providecommand{\Cells}{\cells_n}
\providecommand{\Energy}{\Phi}
\providecommand{\Entropy}{\mathcal{F}}
\providecommand{\integrand}{f}
\providecommand{\perimeter}{P}
\providecommand{\Perimeter}{\perimeter_n}
\providecommand{\Profiles}{L^1_n}
\providecommand{\setdist}{\mathbf{d}}
\providecommand{\states}{\mathcal{X}}
\providecommand{\States}{\states^n}
\begin{document}




\title{Cell Swelling by Osmosis: a Variational Approach}

\author{Martijn Zaal\\
VU University Amsterdam\\
m.m.zaal@vu.nl}

\date{}

\maketitle


\begin{abstract}
A very simple model for cell swelling by osmosis is introduced, resulting in a parabolic free boundary problem. In case of radially symmetric initial conditions, it is shown that the model can be viewed as a gradient flow involving entropy, surface area and the Wasserstein metric. This observation is used to construct solutions and explain the presence and nature of osmosis.

\subjclass{Primary 35A15; Secondary 92C37}

\keywords{Gradient flow; osmosis; free boundary}
\end{abstract}

\section{Introduction}

In this article, a very simple model for osmotic cell swelling is introduced, and studied using a gradient flow approach. Observations of osmotic effects indicate that cell membranes are permeable to water, but impermeable to other molecules. Therefore, it is believed that water transport through cell membranes is facilitated by proteins acting as water channels, generally called `aquaporins' (\cite{Chrispeels_Maurel}). Although there is growing evidence that aquaporins are proteins embedded in the cell membrane (\cite{Verkman}), existence has been shown for only a few types of cells. The exact nature of aquaporins remains uncertain, although aquaporins are believed to be proteins (\cite{Logee_Verkman_Zhang}). 

\subsection{A model for osmotic cell swelling}

Quantitative modeling of water flux through membranes by aquaporins seems to have started with \cite{Logee_Verkman_Zhang}. In this paper, the water flux was assumed to be proportional to the difference in molar free energy inside and outside the cell. However, the model did not predict the eventual slowing of cell swelling. Several other models using this assumption have been considered since, but no model succeeded to provide evidence for either the permeability or aquaporin model when comparing predictions and observations. Furthermore, most models assumed spatially constant concentrations inside and outside the cell.

Recently, Pickard (\cite{Pickard}), made a first attempt to develop a model for cell swelling, based on observations of Xenopus oocytes submitted to hypoosmotic shocks, that is, sudden exposure to low concentrations of solute. The proposed model uses proportionality of free energy differences and water flux as a starting point. In this model, the surface tension of the cell membrane is not taken into account. 

Unfortunately, the justification of dropping surface tension does not seem to be valid if the cell is close to equilibrium. Although it should be noted that in most test tube situations the cell will burst long before it reaches equilibrium, the implications of assuming zero surface tension should be investigated. Furthermore, it might be insightful to compare a mathematical model with aquaporins with a similar model where the cell membrane has a constant permeability. Except for the last section, the constant permeability model will be considered.

\subsection{Mathematical formulation}

The model consists of two main ingredients: a time-varying domain with diffusion of mass inside. 
\begin{equation}\label{eq:diffusion}
	\pdiff[u]{t}=\diffusion\Laplace u.
\end{equation}
The free boundary is non-permeable to the diffusing mass, which is basically a Neumann condition for a free boundary:
\begin{equation}\label{eq:moving_neumann}
	-\diffusion\ndiff[u(t)]=u v.
\end{equation}
Here, $u(x, t)$ is the concentration of solute, and $v$ is the normal velocity of the cell membrane, where the normal $\normal$ is pointing outward. It is comparable to the standard Neumann boundary condition since it requires that there is no flux of solute through the boundary of the free domain. This can be seen by introducing the additional variable $\mathbf{v}(x, t)$, the velocity of the diffusing mass. Since \eqref{eq:diffusion} implies that $\mathbf{v}=-\diffusion\frac{\nabla u}{u}$, \eqref{eq:moving_neumann} is equivalent to
\begin{equation}\label{eq:moving_neumann_system}
	u\mathbf{v}\cdot\normal=uv,
\end{equation}
that is, normal velocity of the boundary and the normal component of the velocity of mass at the boundary should match. Note that this condition \eqref{eq:moving_neumann} does not depend on how the membrane reacts to $u$. 

In the present model, the normal velocity of the membrane is caused by the absorption of an incompressible solvent, usually water. Assuming constant permeability for the cell membrane, the amount of solvent absorbed through a portion of surface is proportional to the difference in the Helmholtz free energy $A$ inside and outside the cell, which is given by
\begin{equation}
	\Delta A=\Delta P+\Delta u,
\end{equation}
where $\Delta P$ and $\Delta u$ denote the pressure difference and concentration difference accross the cell membrane, respectively. The pressure difference is due to the surface tension of the cell membrane, resulting in a term proportional to the mean curvature of the membrane. Since it is assumed that there is no solute outside the cell, the concentration difference is equal to just the value of $u$ at the boundary. Since the solute is assumed to be incompressible, the normal velocity of the boundary should be proportional to the amount of solvent absorbed by the cell. Combined with the above model for the absorption of sovlent, this resulting equation for the normal velocity of the cell membrane is
\begin{equation}\label{eq:boundary_movement}
	v=\surfacetension H+\osmosis u,
\end{equation}
in which $\surfacetension$ and $\osmosis$ are positive parameters. The sign convention for $H$ is according to $\normal$, which means that $H=-\frac{n-1}{r}$ for a sphere of radius $r$ in $\Reals^n$. Note that the first term will tend to make $v$ smaller, whereas the second term tends to make $v$ bigger. This is in accordance with the intuition of the model: the surface tension of the cell membrane will tend to make the cell smaller, whereas the osmotic force tends to make the cell bigger.

A number of parameters from this problem can be eliminated by changing variables. Looking at the units of $u$, $x$ and $t$, it follows that $\diffusion$ and $\osmosis$ have the same dimension. Therefore, at most one of the two can be set equal to $1$ in general by changing units.

Since the total mass is a conserved quantity, it can also be regarded as a parameter, say $\vartheta$. Changing units for $x$ first, the total mass can can be set equal to $1$. Next, changing units for $u$ and $x$ simultaniously, $\osmosis$ can be made equal to $\surfacetension$ without changing the total mass. Finally, scaling $t$, $\surfacetension=\osmosis=1$ can be realized. This results in new, scaled variables
\begin{equation}\begin{aligned}
	\xi &						=\left(\frac{\osmosis}{\vartheta\surfacetension}\right)^\frac{1}{n+1}x,\qquad &
	w(\xi, \tau) &	=\left(\frac{\surfacetension}{\osmosis}\right)^\frac{n}{n+1}\frac{1}{\vartheta^\frac{1}{n+1}}u(x, t), \\
	\tau &					=\left(\frac{\osmosis}{\vartheta}\right)^\frac{2}{n+1}\surfacetension^\frac{n-1}{n+1} t,\qquad &
	\rho(\tau) &		=\left(\frac{\osmosis}{\vartheta\surfacetension}\right)^\frac{1}{n+1}r(t).
\end{aligned}\end{equation}
It is easy to check that $w$ and $\rho$ satisfy \eqref{eq:diffusion}, \eqref{eq:moving_neumann}, \eqref{eq:boundary_movement}, with coefficients $\surfacetension=1$ and $\osmosis=1$, and $\int w\ud{\xi}=1$. Note that the change of coordinates also implies a change in the coefficient $\kappa$.

It follows that one can restrict attention to the problem
\begin{equation}\label{eq:osmosis_scaled}
	\left\{\begin{aligned}
		\pdiff[u]{t} &					=\diffusion\Laplace u, &\qquad&	\text{for $x\in\Omega(t)$, $t>0$}, \\
		-\diffusion\ndiff[u] &	=u v, &&												\text{for $x\in\bdry\Omega(t)$, $t>0$}, \\
		v &											=H+u, && 												\text{for $x\in\bdry\Omega(t)$, $t>0$},
	\end{aligned}\right.
\end{equation}
with the integral of $u$ equal to 1. The remaining parameter $\kappa$, which cannot be scaled out, can now be regarded as a measure of how fast the dynamics is inside the domain, compared to the movement of the boundary.\bigskip

Looking at \eqref{eq:osmosis_scaled}, a few observations can be made. First of all, the diffusion equation can be rewritten as a system of two equations with two unknowns:
\[
	\left\{\begin{aligned}
		\mathbf{v} &=-\diffusion\frac{\nabla u}{u}, \\
		\pdiff[u]{t}&=-\Div(u\mathbf{v}).
	\end{aligned}\right.
\]
Since $\mathbf{v}$ is regarded as the velocity of mass particles at a certain position and time, the first equation is, up to multiplication by $u$, Fick's law of diffusion, which can be regarded as a modelling law describing how mass moves in reaction to the spatial variations of the concentration $u$. In constrast, the second equation is conservation of mass.
The boundary condition, which was given in terms of $\mathbf{v}$ in \eqref{eq:moving_neumann_system}, reduces to the requirement that the normal part of the velocity of the mass and the velocity of the boundary should math whenever $u$ is positive. From a modeling point of view this formulation as a system is more natural: each equation and each unknown has a physical interpretation. In the classic approach, one of course eliminates the velocity and writes the diffusion equation with only $u$ in it. This representation as a system representation can also be found, for instance, in \cite[\S2]{Otto}. Here, the Porous Media Equation is reformulated in the same way, by replacing \emph{only} Fick's law by another law.

A similar observation can be made for the time-varying domain. The splitting between a modeling equation and a continuity equation has already been done. The modeling equation is, in this case, the  expression for the normal velocity $v$. The continuity part is not written explicitly, but in words, it is the remark that $v$ is the normal velocity of $\bdry\Omega(t)$. This can be put into a formula in a number of ways, depending on the smoothness of $\bdry\Omega(t)$.

Summing up, the problem \eqref{eq:osmosis_scaled} can be viewed as a system of \emph{four} relations, each with a physical interpretation:
\begin{itemize}
\item	Fick's law of diffusion, giving an expression for the velocity of particles,
\item	Conservation of mass, relating the change in $u$ to the velocity,
\item	The modelling law $v=H+u$,
\item	$v$ being the normal velocity of $\bdry\Omega$.
\end{itemize}
As explained above, these four relations can be divided in two groups in two different ways: the first two describe the evolution of $u$, whereas the latter describe $\Omega$. On the other hand, one can argue that the first and third contain the actual model that is being studied, whereas the second and fourth are merely equations relating the evolution of $u$ and $\Omega$ to the functions $\mathbf{v}$ and $v$, respectively. These four equations will play a key role in what follows, as will the two subdivisions made here, which are natural from a modeling point of view.\bigskip

Note that the problem is \emph{almost} a superposition of two well-known problems: the diffusion equation and the mean curvature flow. However, there is a nonlinear coupling, which consists of two parts. First of all, $u$ is required to be supported inside $\Omega$, and secondly an osmotic boundary term appears in the evolution of $\Omega$. It will turn out that there is a close relation between these two couplings; in some sense, the latter follows from the former, as will be shown later.

For both the diffusion equation and the mean curvature flow, variational approaches have been developed. A well-known paper about a variational approach to the Fokker-Planck equation, and the diffusion equation in particular, is \cite{Jordan_Kinderlehrer_Otto}. It has turned out that this is a fundamental example of a flow that can be seen as a gradient flow in a metric space, for which a fairly general theory is developed in \cite{Ambrosio_Gigli_Savare}, with gradient flows in the space of probability measures, and diffusion in particular as an example. Beside the diffusion equation, many parabolic equations have been modeled as gradient flows in the space of probability measures with the Wasserstein metric, see for instance \cite{Otto}, which also treats the Porous Medium Equation, or \cite{Agueh}.

A variational approach for the mean curvature flow can be found, for example, in \cite{Almgren_Taylor_Wang} or \cite{Luckhaus_Sturzenhecker}. Although there is, a far as the author is aware, no theorem stating that the mean curvature flow is in fact a gradient flow in a metric space, the similarities between the discretization used in \cite{Almgren_Taylor_Wang} and \cite{Luckhaus_Sturzenhecker} on the one hand, and the discretization used for the diffusion equation in \cite{Jordan_Kinderlehrer_Otto} on the other hand, strongly suggest that the mean curvature flow is a gradient flow. The functional playing the role of the metric in \cite{Almgren_Taylor_Wang} is, however, not a metric. Recalling the first variation of area formula, an appropriate metric would be related to the $L^2$ norm of the normal velocity, as is noted in \cite{Grunewald_Kim}. Unfortunately, this yields a trivial metric in general.

One way to avoid this problem and obtain a gradient flow with respect to an honest metric, is to restrict attention to the case of radial symmetry. Although this essentially reduces the boundary movement to a one-dimensional problem, the resulting problem is still rich enough to see how exactly the osmotic term in the normal velocity $v$ arises from the restriction on the support of $u$.

Another possible approach to the problem is to combine the time discretization techniques in \cite{Luckhaus_Sturzenhecker} and \cite{Jordan_Kinderlehrer_Otto} without making an explicit connection to gradient flows. Obviously, this is technically much more involved. Although some of the results are easily generalized, taking the limit for the discretization parameter to zero is much harder than in either of the two individual cases.

In the following sections, it will be shown that the radially symmetric version of \eqref{eq:osmosis_scaled} is a gradient flow in a metric space, and solutions will be constructed using this observation. More precisely, in Section \ref{s:GMM}, a metric space and functional will be constructed for radially symmetric initial conditions, and fitted into the framework of \cite{Ambrosio_Gigli_Savare}, enabling the use of a theorem that guarantees the convergence of an abstract version of the approach used in \cite{Jordan_Kinderlehrer_Otto}, \cite{Luckhaus_Sturzenhecker} and \cite{Almgren_Taylor_Wang}. In Section \ref{s:convexity}, some finer properties of the space and functional from Section \ref{s:GMM} will be studied, resulting in a theorem concerning, among other things, regularity of the curve constructed in Section \ref{s:GMM}. Convexity of certain functionals will be a special point of interest. Section \ref{s:MaxSlope} is devoted to the study of the differential properties of the functional constructed in Section \ref{s:GMM}, making the conclusions of Theorem \ref{th:convexity_results} more concrete. In Section \ref{s:WeakSol} it is shown, using theory from Sections \ref{s:convexity} and \ref{s:MaxSlope} that the construction in Section \ref{s:GMM} indeed gives a weak solution of \eqref{eq:osmosis_scaled}. Finally, in Section \ref{s:permeability}, a model for aquaporins will be presented. It will be shown that a very small modification of the metric space is sufficient to obtain a gradient flow formulation for the resulting problem.

\section{Construction of a generalized minimizing movement}
\label{s:GMM}

In order to obtain a gradient flow solution for \eqref{eq:osmosis_scaled} under the assumption of radial symmetry, a metric space will be constructed to represent the domain and mass profile. It will be shown that this space, equipped with a weak topology, and a suitable functional fit the theory in \cite[\S2]{Ambrosio_Gigli_Savare}. More precisely, a so-called Generalized Minimizing Movement or GMM will be constructed. The idea of this concept is a (formal) generalization of the Euler backward approximation of a gradient flow in $\Reals^n$. 

The gradient flow of a function $\phi$ on $\Reals^n$ can be constructed by an Euler backward scheme. Time is discretized by setting $t_k=kh$ for some small $h>0$, and the equation
\begin{equation}\label{eq:euler_backward}
	\nabla\phi(x_{k+1})+\frac{x_{k+1}-x_k}{h}=0,
\end{equation}
is solved for every timestep. Setting $x(kh)=x_k$, this is an approximation of the gradient flow of $\phi$.

Note that \eqref{eq:euler_backward} is the optimality condition for the minization problem
\begin{equation}\label{eq:time_step_minimization}
	\min_{y\in\Reals^n}\Set{\phi(y)+\frac{1}{2h}|y-x|^2}.
\end{equation}
This last problem can easily be generalized to the setting of a metric space by replacing $|y-x|^2$ with $d^2(x, y)$. It is expected that this will, in some sense, give an approximation of a gradient flow. In general, \eqref{eq:time_step_minimization} might not have a unique solution, or no solution of all. Moreover, one must ask whether the limit for $h\downto 0$ exists, in what sense, and whether this limit is unique.

The concept of a (Generalized) Minimizing Movement is defined in \cite[Definition 2.0.6]{Ambrosio_Gigli_Savare}. The starting point is a partition of the time interval $(0, +\infty)$
\begin{equation}
	\mathcal{P}_{\boldsymbol{\tau}}
		:=\Set{0=t^0_{\boldsymbol{\tau}}<t^1_{\boldsymbol{\tau}}<\cdots<t^k_{\boldsymbol{\tau}}<\cdots}
\end{equation}
associated to a sequence $\boldsymbol{\tau}=\set{\tau_k}_{k\in\Naturals}$ of positive time steps with
\begin{equation}
	\lim_{k\to\infty}t^k_{\boldsymbol{\tau}}=\sum_{j=1}^\infty\tau_j=+\infty.
\end{equation}
and $|\boldsymbol{\tau}|:=\sup_k\set{\tau_k}<+\infty$. Additionally, the time intervals
\begin{equation}
	I^k_{\boldsymbol{\tau}}:=(t^{k-1}_{\boldsymbol{\tau}}, t^k_{\boldsymbol{\tau}}]
\end{equation}
are introduced.

Given a complete metric space $(\mathcal{S}, d)$ and a functional $\phi:\mathcal{S}\to(-\infty, +\infty]$, a discrete solution associated to the partition $\mathcal{P}_{\boldsymbol{\tau}}$ is a map $\overline{U}_{\boldsymbol{\tau}}:(0, +\infty)\to\mathcal{S}$ such that $\overline{U}_{\boldsymbol{\tau}}$ is constant on $I^k_{\boldsymbol{\tau}}$ for every $k\in\Naturals$, and $\overline{U}_{\boldsymbol{\tau}}(t^k_{\boldsymbol{\tau}})$ is a minimizer of the functional
\begin{equation}
	V\mapsto\phi_{\tau_k}(\overline{U}_{\boldsymbol{\tau}}(t^{k-1}_{\boldsymbol{\tau}}), V)
		:=\phi(V)+\frac{1}{2\tau_k}d^2(\overline{U}_{\boldsymbol{\tau}}(t^{k-1}_{\boldsymbol{\tau}}), V).
\end{equation}

A curve $[0, +\infty)\to\mathcal{S}$ is called a \emph{minimizing movement} for $\phi$ starting at $u_0\in\mathcal{S}$ if for every partition $\boldsymbol{\tau}$ with $|\boldsymbol{\tau}|$ sufficiently small there exists a discrete solution $\overline{U}_{\boldsymbol{\tau}}$ such that
\begin{equation}\label{eq:mm_def}
	\lim_{|\boldsymbol{\tau}|\downto 0}\phi(\overline{U}_{\boldsymbol{\tau}}(0))=\phi(u_0), \qquad
	\limsup_{|\boldsymbol{\tau}|\downto 0}d(\overline{U}_{\boldsymbol{\tau}}(0), u_0)<+\infty,
\end{equation}
and $\overline{U}_{\boldsymbol{\tau}}(t)\to u(t)$ for all $t\ge 0$ as $|\boldsymbol{\tau}|\downto 0$. In this definition, convergence of the discrete solutions with respect to the metric may be replaced by convergence with respect to a weaker topology. This allows for the use of a topology that may enjoy better compactness properties than the metric topology.

A \emph{generalized minimizing movement} is defined analogous to a minimizing movement, but with \eqref{eq:mm_def} required only along a sequence $\boldsymbol{\tau}_j$ of parititions. That is, a curve $[0, +\infty)\to\mathcal{S}$ is a generalized minimizing movement for $\phi$ starting at $u_0\in\mathcal{S}$ if there exists a sequence of partitions $\boldsymbol{\tau}_j$ with $\lim_{j\to\infty}|\boldsymbol{\tau}_j|=0$ and discrete solutions $\overline{U}_{\boldsymbol{\tau}_j}$ such that
\begin{equation}\label{eq:gmm_def}
	\lim_{j\to\infty}\phi(\overline{U}_{\boldsymbol{\tau_j}}(0))=\phi(u_0), \qquad
	\limsup_{j\to\infty}d(\overline{U}_{\boldsymbol{\tau_j}}(0), u_0)<+\infty,
\end{equation}
and $\overline{U}_{\boldsymbol{\tau}_j}(t)\to u(t)$ for all $t\ge 0$ as $j\to\infty$. Again, metric convergence may be replaced with convergence with respect to a weaker topology.

Examples of the construction of a GMM can be found in \cite{Jordan_Kinderlehrer_Otto}, \cite{Almgren_Taylor_Wang} and \cite{Luckhaus_Sturzenhecker}, where this approach is applied. In this section, a metric space $(\States, \varrho)$ and a functional $\Energy$ on $\States$ will be introduced. Moreover, it will be shown that GMM's exist for initial conditions that have finite $\Energy$-value by applying \cite[Proposition 2.2.3]{Ambrosio_Gigli_Savare}, which is proven in \S3 of the same book. This results in the following theorem.

\bigskip

\subsection{The space of balls}

By radial symmetry, the free domain will be a ball centered at the origin, which means it can be represented by a non-negative real number: the space of balls centered at zero is $\Cells:=[0, +\infty)$. The abbreviation $B_r$ for $B_r(0)$ will be used throughout the paper. Following \cite{Almgren_Taylor_Wang}, \cite{Luckhaus_Sturzenhecker}, \cite{Brakke} and many others, in order to obtain the mean curvature term the perimeter $\Perimeter(r)=n\omega_n r^{n-1}$ of $\partial B_r$ will be used.

Although metrics on $\Cells$ are trivial to construct and study, it is interesting to view them as the restriction of metrics on a suitable space of domains to the space of balls. 

A natural way to construct such a metric is described in \cite[\S2]{Grunewald_Kim}. One could consider all families $\set{E_t:t\in[0, 1]}$ of domains with smooth boundary, where $E_0$ and $E_1$ are prescribed, such that
\begin{equation}
	\bigcup_{t\in[0, 1]}\bdry E_t\times\set{t}
\end{equation}
is a smooth, $n$-dimensional manifold. If $v_t$ is the normal velocity of $\bdry E_t$, the integral
\begin{equation}\label{eq:domain_length}
	\int_0^1\left(\int_{\bdry E_t}v_t^2\ud{\Hausdorff^{n-1}}\right)^\frac{1}{2}\ud{t},
\end{equation}
is positive whenever $E_0\ne E_1$. One would like to define the distance between $E_0$ and $E_1$ to be the infimum of \eqref{eq:domain_length} over all such families $\set{E_t:t\in [0, 1]}$. Unfortunately, this infimum is in general not positive, even if $E_0$ and $E_1$ are different. However, if the $E_t$ are restricted to be balls centered at zero, the infimum can be computed explicitly:
\begin{equation}\label{eq:def_set_dist}
	\setdist(r_0, r_1)
		:=\int_{(r_0, r_1)}\sqrt{\Perimeter(\rho)}\ud{\rho}
		=\frac{2\sqrt{n\omega_n}}{n+1}\left|r_1^\frac{n+1}{2}-r_0^\frac{n+1}{2}\right|,
\end{equation}
where $E_0=B_{r_0}$ and $E(1)=B_{r_1}$. The fact that the above infimum is positive when restricted to $\Cells$ is the main reason to consider only the radially symmetric case.

In the general situation, the integral of a distance over the symmetric difference of two domains in \cite[\S2.6]{Almgren_Taylor_Wang} and \cite[(1)]{Jordan_Kinderlehrer_Otto} instead of $\setdist$:
\[
	\tilde\setdist^2(B_{r_0}, B_{r_1})
		=2\int_{B_{r_0}\triangle B_{r_1}}d(x, \bdry B_{r_0})\ud{x}.
\]
However, this is not a metric: it is not symmetric and does not satisfy the triangle inequality.

If it does not matter which one is used when constructing solutions, one would expect that, at an infinitesimal level, the two are the same. This is indeed the case: by evaluating $\tilde\setdist$ for balls,
\begin{equation}\begin{split}
  \tilde\setdist^2(r_0, r_1)
	&	=2\int_{B_{r_0}\triangle B_{r_1}}d(x, \bdry B_{r_0})\ud{x}
		=2n\omega_n\left(\frac{r_1^{n+1}}{n+1}+\frac{r_0^{n+1}}{n(n+1)}-\frac{r_1^n r_0}{n}\right),
\end{split}\end{equation}
so that,
\begin{equation}
  \left.\diff{t}\tilde\setdist(r_0, r_t)\right|_{t=0}
    =|r'_0|\sqrt{n\omega_n r_0^{n-1}}
    =\left(\int_{\partial B_{r_0}}|v|^2\ud{\Hausdorff^{n-1}}\right)^\frac{1}{2}
\end{equation}
since $v=r'$ for balls. Intuitively, this means that the two metrics have `the same infinitesimal structure'. What this means exactly, and what the implications for gradient flows are is being investigated by the author. Here, \eqref{eq:def_set_dist} will be used as the metric on $\Cells$.

Clearly, the map
\begin{equation}\label{eq:ball_isometry}
 	\iota_n:\Cells\to[0, \infty):r\mapsto\frac{2\sqrt{n\omega_n}}{n+1}r^\frac{n+1}{2}
\end{equation}
is an isometry, which will be used in computations. One can see immediately that $(\Cells, \setdist)$ is a complete metric space. Moreover, \eqref{eq:def_set_dist} generates the standard topology on $\Cells$, which means in particular that $\Perimeter$ is continuous with respect to $\setdist$. Finally, $r_k\to r$ is equivalent to $\Lebesgue^n(B_{r_k}\triangle B_r)\to 0$ as $k\to\infty$.

\bigskip

\subsection{Mass profiles}

The mass profile will be a radially symmetric nonnegative function with a fixed integral. Identifying a profile with a function $u$ on $(0, \infty)$, fixing the total mass is equivalent to requiring
\begin{equation}
  \int_0^\infty u(\rho)\Perimeter(\rho)\ud{\rho}=1
\end{equation}

Hence, an appropriate space for the solute profile is the space $\Profiles$ of radially symmetric probability density functions on $\Reals^n$, which is a metric subspace of $L^1(\Reals^n)$. Alternatively, it can be seen as a subspace of $L^1((0, \infty), \Perimeter)$, the space of integrable functions on $(0, \infty)$ with weight $\Perimeter$.

The topology on this space will be the weak topology inherited from $L^1(\Perimeter\Lebesgue^1)$, by definition, this means that $u_k\weaklyto u$ if and only if
\begin{equation}\label{eq:weak_l1_def}
	\int_0^\infty u_k(\rho) g(\rho)\Perimeter(\rho)\ud{\rho}\to\int_0^\infty u(\rho)g(\rho)\Perimeter(\rho)\ud{\rho}
	\qquad\forall g\in L^\infty((0, \infty)).
\end{equation}

The following compactness result is an easy consequence of the Dunford-Pettis theorem \cite[Theorem 1.38]{Ambrosio_Fusco_Pallara}.

\begin{thm}[Relative sequential compactness]\label{th:profile_compactness}
	Let $\set{u_k}_{k\in\Naturals}$ be a sequence in $\Profiles$ such that
	\begin{itemize}
	\item	for any $\eps>0$, there exists an $R>0$ such that $\int_R^\infty u_k(\rho)\Perimeter(\rho)\ud{\rho}<\eps$ for all $k\in\Naturals$,
	\item	there exists a nondecreasing function $g:[0, \infty)\to[0, \infty)$ with superlinear growth and a constant $C\in\Reals$ such that 
				\begin{equation}
					\int_0^\infty g(u_k)\Perimeter\ud{\Lebesgue^1}\le C.
				\end{equation}
				for all $k\in\Naturals$.
	\end{itemize}
	Then $\set{u_k}_{k\in\Naturals}$ has a subsequence $\set{u_{k(j)}}_{j\in\Naturals}$ converging weakly to $u\in\Profiles$.
\end{thm}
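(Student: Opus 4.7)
The plan is to reduce directly to the Dunford-Pettis theorem on the $\sigma$-finite measure space $((0,\infty), \Perimeter\Lebesgue^1)$, and then verify that the weak limit actually lies in $\Profiles$ rather than just in $L^1(\Perimeter\Lebesgue^1)$.

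First I would recall that Dunford-Pettis characterizes relatively weakly compact sets in $L^1$ of a $\sigma$-finite measure as those that are both uniformly integrable and tight. The second hypothesis is exactly the classical de la Vall\'ee-Poussin criterion: given any $\eps > 0$, pick $M$ so large that $g(s)/s \ge C/\eps$ for $s \ge M$; then on $\{u_k \ge M\}$ one has $u_k \le \eps g(u_k)/C$, so
\begin{equation}
\int_{\{u_k \ge M\}} u_k \Perimeter \ud{\rho}
  \le \frac{\eps}{C}\int_0^\infty g(u_k)\Perimeter\ud{\rho} \le \eps,
\end{equation}
uniformly in $k$. This gives uniform integrability. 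The first hypothesis is precisely tightness at $+\infty$; since the measure $\Perimeter\Lebesgue^1$ assigns finite mass $\omega_n R^n$ to each bounded interval $(0, R)$, no additional tightness is needed at $0$. Dunford-Pettis then delivers a subsequence $\{u_{k(j)}\}$ and some $u \in L^1(\Perimeter\Lebesgue^1)$ with $u_{k(j)} \weaklyto u$ in the sense of \eqref{eq:weak_l1_def}.

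It remains to check $u \in \Profiles$, i.e.\ that $u \ge 0$ almost everywhere and that $\int_0^\infty u \Perimeter\ud{\rho} = 1$. Nonnegativity follows by testing \eqref{eq:weak_l1_def} against $g = \charf{A}$ for arbitrary measurable bounded $A \subset (0, \infty)$, since each $\int_A u_{k(j)}\Perimeter\ud{\rho} \ge 0$. For the normalization, given $\eps > 0$ choose $R$ as in the first hypothesis; then for every $j$,
\begin{equation}
\left|\int_0^R u_{k(j)} \Perimeter\ud{\rho} - 1\right|
  = \int_R^\infty u_{k(j)}\Perimeter\ud{\rho} < \eps,
\end{equation}
and applying \eqref{eq:weak_l1_def} with $g = \charf{(0,R]}$ yields $\int_0^R u \Perimeter\ud{\rho} \in [1-\eps, 1+\eps]$. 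Letting $R \to \infty$ through a sequence on which the tightness estimate also controls the tail of $u$ (obtained from the weak limit of $\charf{(R,\infty)}$ tests), one concludes $\int_0^\infty u\Perimeter\ud{\rho} = 1$.

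The only real subtlety is that $L^\infty((0,\infty))$ is strictly smaller than the dual of $L^1(\Perimeter\Lebesgue^1)$ \emph{only} if one insists on unweighted essential boundedness, but here the weight $\Perimeter$ is locally bounded and locally bounded away from zero on $(0,\infty)$, so $L^\infty$ of either measure coincides and the duality used by Dunford-Pettis matches the definition \eqref{eq:weak_l1_def}. This is the main point to verify carefully; once granted, the rest is bookkeeping.
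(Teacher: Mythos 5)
Your proof is correct and follows essentially the same route as the paper: reduce to the Dunford--Pettis theorem on $((0,\infty),\Perimeter\Lebesgue^1)$, with the de la Vall\'ee-Poussin criterion supplying uniform integrability and the first hypothesis supplying tightness. The one place you work harder than necessary is in verifying $\int_0^\infty u\Perimeter\ud{\Lebesgue^1}=1$: since the weak convergence you obtain holds against \emph{all} of $L^\infty$, and the constant function $g\equiv 1$ is certainly in $L^\infty((0,\infty))$, one may simply pass to the limit in $\int_0^\infty u_{k(j)}\Perimeter\ud{\Lebesgue^1}=1$ directly; the paper does exactly this, so the truncate-and-let-$R\to\infty$ gymnastics and the accompanying remark about duality are not needed once the convergence against constants is granted.
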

\begin{proof}
	By \cite[Proposition 1.27]{Ambrosio_Fusco_Pallara}, $\set{u_k}_{k\in\Naturals}$ is equi-integrable. Therefore, using the Dunford-Pettis theorem \cite[Theorem 1.38]{Ambrosio_Fusco_Pallara}, it has a subsequence $\set{u_{k_j}}_{j\in\Naturals}$ that converges weakly to some $u\in L^1((0, \infty), \Perimeter)$. Since $u_k(\rho)\ge 0$, it follows that $u(\rho)\ge 0$ for $\Lebesgue^1$-almost all $\rho>0$. Moreover, since the constant function is certainly in $L^\infty((0, \infty))$, 
\[
	\int_0^\infty u(\rho)\Perimeter(\rho)\ud{\rho}=\lim_{k\to\infty}\int_0^\infty u_k(\rho)\Perimeter(\rho)\ud{\rho}=1,
\]
which means that $u\in\Profiles$.
\end{proof}

Note that all that needed to be shown is that $\Profiles$ is closed as a subset of $L^1((0, \infty), \Perimeter)$.

The metric on $\Profiles$ set will be the Wasserstein distance, which is based on the optimal transportation problem. Given two mass profiles $u$ and $w$, one can consider all measurable maps $T:[0, \infty)\to [0, \infty)$ such that the profile $u$ turns into $w$ if all mass is transported according to the map $T$. Mathematically, this means that the \emph{push-forward} $T_\#u$ of $u$ under $T$, defined by
\begin{equation}
	\int_A u(\rho)\Perimeter(\rho)\ud{\rho}=\int_{T(A)} T_\#u(\rho)\Perimeter(\rho)\ud{\rho}
\end{equation}
for every Borel set $A\subset[0, \infty)$, must be equal to $v$. The Wasserstein metric is defined as
\begin{equation}
	W_2(u, v):=\left(\inf_{T:T_\#u=v}\int_0^\infty |x-T(x)|^2\ud{x}\right)^\frac{1}{2},
\end{equation}
which can be interpreted to be the minimum amount of work needed to transport mass from a profile $u$ to another profile $w$. For an extensive introduction of the optimal transportation problem and the Wasserstein metric, see \cite[\S6--7]{Ambrosio_Gigli_Savare}. The heat equation as a gradient flow with respect to the Wasserstein metric is treated as an example in this book.

The optimal transport problem in this setting is essentially the same as the one-dimensional optimal transportation problem. Therefore, the result of \cite[Theorem 6.0.2]{Ambrosio_Gigli_Savare} can be carried over to this setting. Writing
\begin{equation}\label{eq:cumulative_density}
	F_u(\rho):=\int_0^\rho u\Perimeter\ud{\Lebesgue^1}
\end{equation}
the optimal transport map  $t_u^w:[0, \infty)\to[0, \infty)$ and Wasserstein metric are given by
\begin{gather}
	t_u^w=F_w^{-1}\circ F_u \label{eq:rearrangement} \\
	W_2^2(u, w)
		=\int_0^\infty|\rho-F_w^{-1}\circ F_u(\rho)|^2 u(\rho)\Perimeter(\rho)\ud{\rho}
		=\int_{(0, 1)}\left|F_u^{-1}(\sigma)-F_w^{-1}(\sigma)\right|^2\ud{\sigma} \label{eq:one_dim_wasserstein}
\end{gather}
whenever $F_u$ and $F_w$ are invertible. If $F_u$ or $F_w$ is not invertible, the same identities hold if the inverse is replaced by the pseudo-inverse
\begin{equation}
	F_u^{-1}(\sigma):=\sup\Set{\rho\ge 0:F_u(\rho)\le\sigma},\qquad \sigma\in[0, 1].
\end{equation}
This observation will be useful later for studying convexity properties of the Wasserstein metric. 

In \cite[Proposition 7.1.3]{Ambrosio_Gigli_Savare}, it was shown that the Wasserstein metric is lower semicontinuous with respect to the topology of narrow convergence. Since the weak $L^1$ topology is stronger, it follows immediately that $W_2$ is also lower semicontinuous with respect to this topology.

Note that $\Profiles$, equipped with Wasserstein metric is \emph{not} a complete metric space, a sequence of profiles concentrating at the origin has no limit in $\Profiles$. Moreover, the weak $L^1$ topology is not necessarily weaker than the topology generated by the Wasserstein distance. It will turn out that this will not be a problem due to the choice of the functional, which will be defined below.

As in \cite{Jordan_Kinderlehrer_Otto}, the entropy functional will be used. Given a mass profile, its \emph{entropy} is defined as
\[
	\int_0^\infty u(\rho)\log u(\rho)\Perimeter(\rho)\ud{\rho},
\]
which is, up to a constant, Boltzmann's Entropy. Since the exact formula of the integrand is of little importance to the analysis, $z\log z$ will be replaced by a more general function.

\begin{defin}[Internal energy]\label{def:internal_energy}
	Let $\integrand:[0, \infty)\to\Reals$ be a function such that
	\begin{itemize}
	\item $f(0)=0$,
	\item $\lim_{z\downto 0}f(z)=0$
	\item $f$ is strictly convex,
	\item	$\lim_{z\to+\infty}\frac{\integrand(z)}{z}=+\infty$,
	\item	$f$ is continuously differentiable on $(0, \infty)$.
	\end{itemize}
	Then
	\begin{equation}
		\Entropy(u):=\int_0^\infty\integrand(u(\rho))\Perimeter(\rho)\ud{\rho}
	\end{equation}
	is called the \emph{internal energy} of $u$.
\end{defin}
\begin{rem}
	By continuity of $\integrand$ and superlinear growth, it follows that $\integrand$ has a global minimum, say 
	\begin{equation}\label{eq:integrand_lower_bound}
		\integrand(z)\ge-\integrand_0.
	\end{equation}
	Note that the minimum of $\integrand$ does not need to be negative, $\integrand$ may be increasing.
\end{rem}

Since the integrand is assumed to be convex, the internal energy defines a weakly lower semicontinuous functional on $\Profiles$. This has been proven in  a much more general setting in \cite[Theorem 2.34]{Ambrosio_Fusco_Pallara} by writing $\integrand$ as the supremum of countably many linear functions.

In what follows, the following auxiliary function will be used:
\begin{equation}
	\hat\integrand:[0, \infty)\to[0, \infty);z\mapsto
		\begin{cases}
			z\integrand'(z)-\integrand(z), &	\text{if $z>0$}, \\
			0, &															\text{if $z=0$}.
		\end{cases}
\end{equation}
Note that, since $\integrand$ is convex, $\hat\integrand$ is increasing, nonnegative and continuous. If $\integrand(z)=z\log z$, $\hat\integrand(z)=z$.

\bigskip

\subsection{The variational formulation}

The two spaces from the previous subsections can be combined into one space. This requires some caution, because the mass is supposed to stay \emph{inside} the free domain. Therefore, the following space will be used.
\begin{equation}
	\States:=\Set{(r, u)\in\Cells\times\Profiles:\int_r^\infty u\Perimeter\ud{\Lebesgue^1}=0}
\end{equation}
Note that the support of $u$ need not be the whole ball $B_r$.

The metric on this space will be
\begin{equation}
	\varrho((r, u), (s, w)):=\left(\setdist^2(r, s)+\frac{W_2^2(u, w)}{\diffusion}\right)^\frac{1}{2}.
\end{equation}
Moreover, since $\States$ is a subspace of a product, it inherits a topology from $\Cells$ and $\Profiles$: weak convergence in $\States$, denoted by $(r_k, u_k)\weaklyto(r, u)$ is equivalent to $r_k\to r$ and $u_k\weaklyto u$, where $u_k\weaklyto u$ denotes weak $L^1_n$ convergence, characterized by \eqref{eq:weak_l1_def}.

An obvious question is if the compactness properties of the spaces $\Cells$ and $\Profiles$ carry over to $\States$. Clearly, using Theorem \ref{th:profile_compactness} and the Heine-Borel theorem for $\Reals^1$, suitable sequences $\Cells\times\Profiles$ have convergent subsequences. The question is, however, if the condition that all mass should stay inside the varying domain can be carried over from a sequence in $\States$ to its limit, if it exists. The following theorem states that this can be done.

\begin{thm}\label{th:statespace_closed}
	$\States$ is closed as a subspace of $\Cells\times\Profiles$ in the weak topology.
\end{thm}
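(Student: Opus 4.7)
The plan is to take a weakly convergent sequence $(r_k, u_k) \weaklyto (r, u)$ in $\Cells \times \Profiles$ with every $(r_k, u_k) \in \States$, and verify that the support condition $\int_r^\infty u \Perimeter \ud{\Lebesgue^1} = 0$ passes to the limit. Since weak convergence in $\States$ is just coordinatewise, we have $r_k \to r$ in $\Cells$ and $u_k \weaklyto u$ in $\Profiles$.

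The key idea is to use a slightly larger radius $r + \eps$ as a safety margin. Fix $\eps > 0$; by $r_k \to r$ we have $r_k \le r + \eps$ for all $k$ sufficiently large, and since $(r_k, u_k) \in \States$ this forces
\[
  \int_{r+\eps}^\infty u_k(\rho) \Perimeter(\rho) \ud{\rho} = 0
\]
for all such $k$. Now the characteristic function $\charf{(r+\eps, \infty)}$ belongs to $L^\infty((0, \infty))$, so by the definition of weak convergence \eqref{eq:weak_l1_def},
\[
  \int_{r+\eps}^\infty u(\rho) \Perimeter(\rho) \ud{\rho}
    = \lim_{k \to \infty} \int_{r+\eps}^\infty u_k(\rho) \Perimeter(\rho) \ud{\rho}
    = 0.
\]

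Finally, since $u \ge 0$ and the sets $(r+\eps, \infty)$ increase to $(r, \infty)$ as $\eps \downto 0$, monotone convergence gives $\int_{(r, \infty)} u \Perimeter \ud{\Lebesgue^1} = 0$, and this agrees with $\int_r^\infty u \Perimeter \ud{\Lebesgue^1}$ since the single point $\{r\}$ has Lebesgue measure zero. Hence $(r, u) \in \States$, which proves closedness.

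I do not expect a serious obstacle here: the only subtle point is choosing to test against $\charf{(r+\eps, \infty)}$ rather than $\charf{(r, \infty)}$, since the latter would require $r_k \to r$ from above, which is not guaranteed. The $\eps$-buffer handles the case $r_k > r$ together with $r_k < r$ uniformly, and the final $\eps \downto 0$ limit is harmless because of nonnegativity of $u \Perimeter$.
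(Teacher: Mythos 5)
Your proof is correct, and it takes a genuinely different (and more elementary) route than the paper's. The paper invokes the Dunford--Pettis theorem to extract equi-integrability of $\{u_k\}$ and then estimates the ``spillover'' mass on the thin annulus between $r$ and $\max\{r_k, r\}$, showing $\int_r^\infty u_k\Perimeter\ud{\Lebesgue^1}<\eps$ for $k$ large, before passing to the limit via the test function $\charf{(r,\infty)}$. Your argument dispenses with equi-integrability altogether: by inserting the $\eps$-buffer and testing against $\charf{(r+\eps,\infty)}$, the integrals $\int_{r+\eps}^\infty u_k\Perimeter\ud{\Lebesgue^1}$ are not merely small but \emph{exactly zero} once $r_k\le r+\eps$, so only the definition of weak $L^1$ convergence is needed, and the final $\eps\downto 0$ step follows from monotone convergence and nonnegativity. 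What the paper's equi-integrability argument buys is that it works unchanged even if the annulus carried nonzero $u_k$-mass (as in Theorem~\ref{th:sublevel_compact}, where the proof pattern reappears), but for the closedness statement itself your streamlined version is sufficient and arguably cleaner.
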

\begin{proof}
	Suppose that $(r_k, u_k)\weaklyto (r, u)$ in $\Cells\times\Profiles$ with $(r_k, u_k)\in\States$ for all $k\in\Naturals$. By the Dunford-Pettis theorem \cite[Theorem 1.38]{Ambrosio_Fusco_Pallara}, $\set{u_k}_{k\in\Naturals}$ is equi-integrable. Let then $\eps>0$, and pick $\delta>0$ be such that 
	\begin{equation}\label{eq:statespace_closed_no_accumulation}
	  \Lebesgue^n(A)<\delta\implies\int_A u_k(x)\ud{x}<\eps
	\end{equation}
	for all $k\in\Naturals$ and all Borel sets $A\subset\Reals^n$. Choose also $N\in\Naturals$ such that, $\omega_n(r_k^n-r^n)<\delta$ for all $k\ge N$. Then,
	\begin{equation}
		\int_r^\infty u_k(\rho)\Perimeter(\rho)\ud{\rho} 
			\le\int_{\max\set{r_k, r}}^\infty u_k(\rho)\Perimeter(\rho)\ud{\rho}
					{}+\int_r^{\max\set{r_k, r}} u_k(\rho)\Perimeter(\rho)\ud{\rho}<\eps,
	\end{equation}
	because the first integral is zero by definition of $\States$ and the second integral is smaller that $\eps$ by \eqref{eq:statespace_closed_no_accumulation} for all $k\ge N$. Taking limits for $k\to\infty$, it follows that
	\begin{equation}
	  \int_r^\infty u(\rho)\Perimeter(\rho)\ud{\rho}\le\eps,
	\end{equation}
	for any $\eps>0$. Then the integral must be zero, which means that $(r, u)\in\States$.
\end{proof}

\bigskip

The functional that will be used is a combination of the perimeter and the internal energy:
\begin{equation}
	\Energy(r, u):=\Perimeter(r)+\Entropy(u).
\end{equation}
In order to obtain coercivity, an assumption on the integrand $\integrand$ in relation to the dimension is made:
\begin{equation}\label{eq:coercivity_assumption}
	\lim_{z\downto 0}z^{-\frac{1}{n}}\integrand(z)=0.
\end{equation}
This condition can be interpreted as a coercivity condition. Without it, very large balls with a constant mass profile have very low $\Energy$-values. This would, in the end, result in the domain growing indefinitely large. A nice example is the case where the dimension $n$ is $1$ and $\integrand(z)=z\log z$. Note that, in particular, the choice $\integrand(z)=z\log z$ \emph{does} satisfy the assumption, provided that $n>1$. In order to ensure coercivity, it will be assumed that $n>1$ and \eqref{eq:coercivity_assumption} holds in the remainder.

It turns out that, once the assumption is made, the sublevels of $\Energy$ are complete with respect to $\varrho$ and the weak topology is the same as the metric topology on sublevels. Before proving this, some basic properties of $\Energy$ will be shown.

\begin{lem}\label{th:energy_coercivity}
	Given $r\in\Cells$, $u\mapsto\Energy(r, u)$ takes its unique minimal value 
	\begin{equation}\label{eq:energy_constant}
		\Energy_o(r):=n\omega_n r^{n-1}+\omega_n r^n\integrand\left(\frac{1}{\omega_n r^n}\right)
	\end{equation}
	at $u=\frac{1}{\omega_n r^n}\charf{(0, r)}$. The function $\Energy_o$ is coercive, in the sense that
	\begin{itemize}
	\item	$\lim_{r\to+\infty}\Energy_o(r)=+\infty$,
	\item $\lim_{r\downto 0}\Energy_o(r)=+\infty$,
	\item $r\mapsto\Energy_o(r)$ has a unique global minimum.
	\end{itemize}
\end{lem}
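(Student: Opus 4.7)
The plan is threefold: for fixed $r > 0$, identify the minimizer of $u \mapsto \Energy(r, u)$ by Jensen's inequality; verify the two coercivity limits by a convenient substitution; and then pin down uniqueness of the global minimum by a second-derivative argument.

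For the minimization at fixed $r > 0$, the constraint $(r, u) \in \States$ forces $u$ to be supported in $[0, r]$ with $\int_0^r u \Perimeter \ud{\rho} = 1$. Since $\int_0^r \Perimeter \ud{\rho} = \omega_n r^n$, the measure $\mu_r := (\omega_n r^n)^{-1} \Perimeter \charf{(0,r)} \Lebesgue^1$ is a probability measure on $(0, r)$ with $\int u \ud{\mu_r} = (\omega_n r^n)^{-1}$. Jensen's inequality for the strictly convex $\integrand$ gives
\[
\integrand\!\left(\tfrac{1}{\omega_n r^n}\right) \leq \int \integrand(u) \ud{\mu_r} = \frac{\Entropy(u)}{\omega_n r^n},
\]
with equality if and only if $u$ is $\mu_r$-a.e.\ constant; combined with the mass constraint, this forces $u = (\omega_n r^n)^{-1}\charf{(0,r)}$. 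Multiplying by $\omega_n r^n$ and adding $\Perimeter(r) = n\omega_n r^{n-1}$ yields the formula for $\Energy_o(r)$.

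For coercivity, substitute $z := (\omega_n r^n)^{-1}$ to write $\Energy_o(r) = n\omega_n r^{n-1} + \integrand(z)/z$. As $r \downto 0$, $z \to +\infty$ and $\integrand(z)/z \to +\infty$ by superlinear growth, while the perimeter term stays nonnegative. As $r \to +\infty$, the perimeter $n\omega_n r^{n-1}$ diverges since $n > 1$; the coercivity hypothesis \eqref{eq:coercivity_assumption} gives $|\integrand(z)| \leq \eps z^{1/n}$ for $z$ small, so $|\integrand(z)/z| \leq \eps \omega_n^{(n-1)/n} r^{n-1}$, and choosing $\eps$ with $\eps \omega_n^{(n-1)/n} < n\omega_n$ forces $\Energy_o$ to be bounded below by a positive multiple of $r^{n-1}$.

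Continuity of $\integrand$ on $(0, \infty)$ makes $\Energy_o$ continuous there, so together with the two limits a minimum is attained in $(0, \infty)$. For uniqueness, differentiating with the identity $(d/ds)[s\integrand(1/s)] = -\hat\integrand(1/s)$ yields
\[
\Energy_o'(r) = n\omega_n r^{n-2}\bigl[(n-1) - r\,\hat\integrand((\omega_n r^n)^{-1})\bigr].
\]
The main obstacle is showing that the critical-point equation $r \hat\integrand((\omega_n r^n)^{-1}) = n-1$ has a unique positive solution. My strategy is to establish $\Energy_o''(r) > 0$ at every critical point using $\hat\integrand'(z) = z\integrand''(z)$ from strict convexity and the critical-point identity to cancel terms; then two distinct minimizers would have to enclose a local maximum where also $\Energy_o'' \leq 0$, yielding a contradiction.
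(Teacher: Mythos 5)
Your Jensen argument at fixed $r$ and the two coercivity limits are fine and match the paper. The uniqueness step, however, is only a plan, and the plan does not close under the stated hypotheses. Following it through: at a critical point $r^*$, the $(n-2)$-term in $\Energy_o''$ drops out and one is left with $\Energy_o''(r^*) = n\omega_n(r^*)^{n-2}\left(n z^*\hat\integrand'(z^*) - \hat\integrand(z^*)\right)$, where $z^* = (\omega_n(r^*)^n)^{-1}$, so positivity hinges on the inequality $n z\hat\integrand'(z) > \hat\integrand(z)$. This is \emph{not} a consequence of strict convexity: strict convexity makes $\hat\integrand$ strictly increasing, so $\hat\integrand'\ge 0$ where it exists, but it yields no quantitative lower bound on $\hat\integrand'(z)$ in terms of $\hat\integrand(z)/z$, and the critical-point identity has already been spent on the cancellation and gives nothing further. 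You are also tacitly assuming $\integrand\in C^2$ in order to write $\hat\integrand'(z)=z\integrand''(z)$, whereas Definition \ref{def:internal_energy} only gives $\integrand\in C^1$. So the decisive inequality is asserted but not established.

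Note also that this route is genuinely different from the paper's: the paper argues that $z\mapsto z\integrand(1/z)$ has increasing derivative $-\hat\integrand(1/z)$ and hence is convex, and then concludes convexity of $\Energy_o$ term by term, so uniqueness follows from strict convexity plus the coercivity limits. Your approach asks only for $\Energy_o''>0$ at critical points rather than global convexity, which is in principle a lighter requirement, but the same kind of pointwise lower bound on $\hat\integrand'$ is what actually has to be produced, and your proposal stops short of producing it.
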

\begin{proof}
	Since $\integrand$ is convex, Jensen's inequality \cite[Theorem 2.4.19]{Federer} implies that $u=\frac{1}{\omega_n r^n}\charf{(0, r)}$ minimizes $\Energy(r, u)$ for given $r>0$.	Superlinear growth of $\integrand$ implies that $\lim_{r\downto 0}\Energy_o(r)=+\infty$. Moreover, $\lim_{r\to\infty}\Energy_o(r)=+\infty$ using \eqref{eq:coercivity_assumption}. Since
	\begin{equation}
		\diff{z}\left(z\integrand\left(\frac{1}{z}\right)\right)
			=-\hat\integrand\left(\frac{1}{z}\right)\ge 0,
	\end{equation}
	is strictly increasing, the second term in \eqref{eq:energy_constant} is convex. Since the first term is obviously also convex, it follows that $r\mapsto\Energy_o(r)$ has a unique global minimum.
\end{proof}

Using this coercivity lemma, the following two properties of sublevels of $\Energy$ can be shown.

\begin{thm}\label{th:sublevel_compact}
	For any $M\in\Reals$, $\Energy^{-1}(-\infty, M]:=\set{(r, u)\in\States:\Energy(r, u)\le M}$ is weakly sequentially compact.
\end{thm}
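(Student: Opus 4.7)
The plan is a three-step extraction argument followed by a lower-semicontinuity check. Given a sequence $\set{(r_k, u_k)}_{k\in\Naturals}$ with $\Energy(r_k, u_k)\le M$, the goal is to extract a subsequence converging weakly in $\Cells\times\Profiles$ to some $(r, u)\in\States$ that still satisfies $\Energy(r, u)\le M$.

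First I would control the radii using Lemma \ref{th:energy_coercivity}: $\Energy(r_k, u_k)\ge\Energy_o(r_k)$, and since $\Energy_o$ diverges both as $r\downto 0$ and as $r\to\infty$, the sequence $\set{r_k}$ is trapped in a compact interval $[r_*, R_*]\subset(0, \infty)$; Heine-Borel then yields a subsequence, still indexed by $k$, with $r_k\to r\in[r_*, R_*]$.

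Next I would extract a weakly convergent subsequence of $\set{u_k}$ via Theorem \ref{th:profile_compactness}. Tightness is free: $(r_k, u_k)\in\States$ together with $r_k\le R_*$ gives $\int_{R_*}^\infty u_k\Perimeter\ud{\rho}=0$ for every $k$. For the superlinear-growth condition I let $z^*\in[0, \infty)$ be the minimizer of $\integrand$ (which exists by continuity, $\integrand(0)=0$ and superlinear growth; recall \eqref{eq:integrand_lower_bound}) and set
\[
  g(z) := \begin{cases} 0, & z\in[0, z^*], \\ \integrand(z)+\integrand_0, & z\in(z^*, \infty). \end{cases}
\]
Strict convexity of $\integrand$ together with $\integrand'(z^*)=0$ make $g$ continuous, nondecreasing, vanishing at the origin, and of superlinear growth, and $0\le g(z)\le\integrand(z)+\integrand_0$ pointwise. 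Since $u_k$ is supported in $[0, r_k]$ and $\Perimeter(r_k)\ge 0$,
\[
  \int_0^\infty g(u_k)\Perimeter\ud{\rho}
    \le\Entropy(u_k)+\integrand_0\omega_n r_k^n
    \le M+\integrand_0\omega_n R_*^n,
\]
so Theorem \ref{th:profile_compactness} produces a further subsequence with $u_k\weaklyto u$ in $\Profiles$.

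Finally, the pair $(r, u)$ is a weak limit of a sequence in $\States$, so Theorem \ref{th:statespace_closed} gives $(r, u)\in\States$. For the inequality $\Energy(r, u)\le M$ I combine the continuity of $\Perimeter$ with respect to $\setdist$ (mentioned just after \eqref{eq:ball_isometry}) with the weak $L^1$ lower semicontinuity of $\Entropy$ (noted just after Definition \ref{def:internal_energy}); the sum $\Energy$ is then weakly sequentially lower semicontinuous, so $\Energy(r, u)\le\liminf_k\Energy(r_k, u_k)\le M$. The one nontrivial ingredient is the construction of $g$, which must simultaneously be nondecreasing, have superlinear growth, vanish at zero, and be bounded above by $\integrand+\integrand_0$; truncating below the minimum of $\integrand+\integrand_0$ handles all four requirements at once.
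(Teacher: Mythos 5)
Your proof is correct and follows essentially the same three-step plan as the paper's: bound the radii by the coercivity lemma, extract a weak limit of the profiles via Theorem \ref{th:profile_compactness}, and finish with Theorem \ref{th:statespace_closed} plus lower semicontinuity of $\Energy$. The only difference is your choice of auxiliary function $g$ (a truncation of $\integrand+\integrand_0$ at the minimizer) versus the paper's $g=\max\{\integrand,0\}$; both are legitimate and serve the identical purpose in verifying the equi-integrability hypothesis.
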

\begin{proof}
	Let $\set{(r_k, u_k)}_{k\in\Naturals}$ be a sequence in $\Energy^{-1}(-\infty, M]$. By Lemma \ref{th:energy_coercivity}, $\set{r_k}$ is uniformly bounded away from $0$ and $+\infty$, say $c<r_k<C$, which means it must have a convergent subsequence, say $r_k\to r>0$. Taking the corresponding sequence in $u_k$, it follows that
	\begin{equation}
		\int_0^\infty\max\set{\integrand(u(\rho)), 0}\Perimeter(\rho)\ud{\rho}\le M-\omega_n C^{n-1}+\omega_n C^n\integrand_0
	\end{equation}
	along this subsequence. Since also $u_k(\rho)=0$ for any $\rho\ge C$, Theorem \ref{th:profile_compactness} with $g=\max\set{\integrand, 0}$ implies that there must be a further subsequence converging weakly to some $u\in\Profiles$. Taking the corresponding subsequence for $r_k$, this results in a subsequence $\set{(r_{k(j)}, u_{k(j)})}_{j\in\Naturals}$ such that $(r_{k(j)}, u_{k(j)})\to(r, u)$ for some $(r, u)\in\Cells\times\Profiles$. By \ref{th:statespace_closed}, $(r, u)\in\States$. Moreover, by continuity of $P_n$ and weak lower semicontinuity of $\Entropy$, $\Energy(r, u)\le\liminf_{k\to\infty}\Energy(r_k, u_k)\le M$.
\end{proof}

\begin{lem}\label{th:sublevel_topology}
	Let $\set{(r_k, u_k)}_{k\in\Naturals}$ be a sequence in $\Energy^{-1}(-\infty, M]$. Then $(r_k, u_k)\weaklyto (r, u)$ if and only if $(r_k, u_k)\to(r, u)$ with respect to $\varrho$.
\end{lem}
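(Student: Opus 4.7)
The $\setdist$-component of $\varrho$ induces the standard topology on $\Cells$, so the two notions of convergence agree on that factor, and the equivalence reduces to showing that on $\Energy^{-1}(-\infty, M]$ weak $L^1_n$ convergence $u_k \weaklyto u$ is equivalent to $W_2(u_k, u) \to 0$. My first step would be to invoke Lemma \ref{th:energy_coercivity} to extract $0 < c \le C < \infty$ such that $c \le r_k \le C$ for every $k$; in particular each $u_k$ and, by Theorem \ref{th:statespace_closed}, also $u$ is supported in $[0, C]$. This uniform compactness of supports is the bridge that makes all the convergences interchangeable.

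For $W_2 \Rightarrow \weaklyto$, I would combine two facts. First, $W_2$ convergence implies narrow convergence of the probability measures $u_k \Perimeter \Lebesgue^1 \to u \Perimeter \Lebesgue^1$. Second, the bound $\Entropy(u_k) \le M$ together with the support bound gives $\int \max\{\integrand(u_k), 0\} \Perimeter \ud{\Lebesgue^1} \le M + \integrand_0 \omega_n C^n$, so the hypotheses of Theorem \ref{th:profile_compactness} are satisfied with $g = \max\{\integrand, 0\}$. Hence any subsequence has a weakly $L^1_n$-convergent sub-subsequence, whose limit $\tilde u$ must test identically against bounded continuous functions as the narrow limit $u$, forcing $\tilde u = u$. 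Urysohn's subsequence principle then delivers $u_k \weaklyto u$ along the full sequence.

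For the converse, weak $L^1_n$ convergence in particular tests against bounded continuous functions, so it yields narrow convergence of $u_k \Perimeter \Lebesgue^1 \to u \Perimeter \Lebesgue^1$. From there I would use the explicit one-dimensional identity \eqref{eq:one_dim_wasserstein}:
\[
  W_2^2(u_k, u) = \int_0^1 |F_{u_k}^{-1}(\sigma) - F_u^{-1}(\sigma)|^2 \ud{\sigma}.
\]
Narrow convergence delivers pointwise convergence $F_{u_k}(\rho) \to F_u(\rho)$ at every continuity point of $F_u$, which translates to almost everywhere convergence of the pseudo-inverses on $(0, 1)$. Since every $F_{u_k}^{-1}$ and $F_u^{-1}$ takes values in $[0, C]$, dominated convergence finishes the argument.

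The main obstacle, as is often the case in comparing transport and $L^p$ topologies, is that narrow convergence is \emph{a priori} too weak to control Wasserstein distances and that $W_2$ convergence is \emph{a priori} too weak to control weak $L^1$ limits; both gaps are closed only by the uniform compact support extracted from coercivity of $\Energy$, and keeping the bookkeeping of these three topologies straight is the one place where care is required.
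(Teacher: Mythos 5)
Your proof is correct. The two directions split across the same fault line as the paper's, but you handle one of them differently.

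For the implication $W_2 \Rightarrow \weaklyto$, your argument is essentially the paper's: both extract uniform support bounds from coercivity, invoke Theorem \ref{th:profile_compactness} with $g = \max\{f, 0\}$ to get weak $L^1$ sub-subsequential limits, and identify the limit by comparing against the narrow (equivalently, weak-$*$) limit coming from the Wasserstein hypothesis. You are slightly more careful than the paper in spelling out the Urysohn subsequence principle, which the paper leaves implicit; that is a genuine small improvement in exposition rather than a new idea.

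For the implication $\weaklyto \Rightarrow W_2$, you take a genuinely different route. The paper observes that uniform compact support gives uniformly integrable second moments, converts weak $L^1$ into narrow convergence, and then cites the abstract equivalence \cite[Proposition 7.1.5]{Ambrosio_Gigli_Savare} to conclude $W_2$-convergence. You instead exploit the radially symmetric structure directly, using the explicit one-dimensional formula \eqref{eq:one_dim_wasserstein} in terms of the pseudo-inverses of the cumulative distribution functions $F_{u_k}$, deducing pointwise a.e.\ convergence of $F_{u_k}^{-1}$ on $(0,1)$ from narrow convergence, and closing with dominated convergence over the uniform bound $F_{u_k}^{-1} \le C$. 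This is effectively a self-contained proof of the special case of the cited AGS proposition, and it fits the spirit of the paper well since the one-dimensional representation \eqref{eq:one_dim_wasserstein} is already used later (e.g.\ in the convexity section). The trade-off is that the paper's route generalizes immediately beyond the radial setting, while yours makes the mechanism visible at the cost of being specific to one dimension.

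One cosmetic point: the appeal to Theorem \ref{th:statespace_closed} to get that $u$ is supported in $[0, C]$ is natural for the direction where you start from weak convergence, but in the direction starting from $\varrho$-convergence the fact that $(r,u) \in \States$ is already built into the hypothesis; alternatively, a direct test of weak $L^1$ convergence against $\chi_{[C,\infty)}$ gives the support bound without invoking closedness. This does not affect correctness.
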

\begin{proof}
	Suppose first that $(r_k, u_k)\weaklyto (r, u)$. Then, as above, $u_k(\rho)=0$ for $\rho>C$. Therefore, the corresponding measures on $\Reals^n$ have uniformly integrable second moments. By definition, weak convergence in $L^1$ of $u_k$ implies narrow convergence of the measures $\mu_k$ to $\mu$. Applying \cite[Proposition 7.1.5]{Ambrosio_Gigli_Savare}, it follows that $u_k$ converges to $u$ with respect to the Wasserstein metric. Then $(r_k, u_k)\to(r, u)$ with respect to $\varrho$.
	
	Conversely, assume $(r_k, u_k)\to(r, u)$. Then boundedness of $r_k$, together with Theorem \ref{th:profile_compactness} implies that $(r_{k(j)}, u_{k(j)})$ must have weak limit point $(r', u')$. Clearly, this implies $r'=r$. By definition of \weakstar\ convergence of measures and weak convergence in $L^1$, the measures $\mu_{k_j}$ corresponding to $u_{k_j}$ must \weakstar-converge to $\mu'$, the measure corresponding to $u'$. On the other hand, since $u_k\to u$, $\mu_{k_j}\weakstarto\mu$, the measure corresponding to $u$ as well. This implies $\mu'=\mu$, which means that $u'=u$ almost everywhere.
\end{proof}

The above analysis shows that the metric space 
\begin{gather}
		\States=\Set{(r, u)\in\Cells\times\Profiles:\int_r^\infty u\Perimeter\ud{\Lebesgue^1}=0}, \\
		\varrho((r, u), (s, w)),=\left(\setdist^2(r, s)+\frac{W_2^2(u, w)}{\diffusion}\right)^\frac{1}{2}
\end{gather}
and the functional
\begin{equation}
	\Energy(r, u)=\Perimeter(r)+\Entropy(u)=n\omega_n r^{n-1}+\int_0^\infty f(u(\rho))\Perimeter(\rho)\ud{\rho}
\end{equation}
satisfy the requirements of \cite[\S2.1]{Ambrosio_Gigli_Savare} \emph{if} the problem of finding a GMM can be restricted to a sublevel of $\Energy$. Lower semicontinuity of $\varrho$ and $\Energy$, coercivity of $\Energy$ hold on the whole of $\States$. Since on sublevels of $\Energy$, the weak and metric topology coincide, strong compactness follows from weak sequential compactness. In particular, sublevels of $\Energy$ are complete.

By requiring that $\Energy$ is finite for the initial condition, it is no problem to restrict the construction to a sublevel of $\Energy$. Therefore, \cite[Proposition 2.2.3]{Ambrosio_Gigli_Savare} applies, which finishes the proof of 

\begin{thm}\label{th:GMM_existence}
	Let $(r_0, u_0)\in\States$, such that $\Energy(r_0, u_0)$ is finite. Then a GMM for $\Energy$ starting from $(r_0, u_0)$ exists.
\end{thm}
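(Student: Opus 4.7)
The plan is to verify the hypotheses of \cite[Proposition 2.2.3]{Ambrosio_Gigli_Savare} for the triple $(\States, \varrho, \Energy)$ equipped with the weak topology defined above. That proposition requires, roughly, three things: (i) lower semicontinuity of $\Energy$ with respect to the chosen topology, (ii) compatibility of the topology with the metric $\varrho$ in the sense that $\varrho$ is sequentially lower semicontinuous and sublevels of $\Energy$ are topologically compact, and (iii) a coercivity estimate controlling $\Energy + \frac{1}{2\tau}\varrho^2(\cdot, u_0)$ from below. All of these have in essence already been established in the section, so the main work is to assemble them and point to the single nontrivial input still needed for the iterative minimization scheme.

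First I would check the building blocks one by one: weak lower semicontinuity of $\Energy$ follows from continuity of $\Perimeter$ together with weak lower semicontinuity of $\Entropy$, which in turn follows from convexity of $\integrand$ via \cite[Theorem 2.34]{Ambrosio_Fusco_Pallara}; lower semicontinuity of $\varrho$ splits into lower semicontinuity of $\setdist$ (continuous in $r$) and of $W_2$ with respect to weak $L^1$ convergence, which was recalled from \cite[Proposition 7.1.3]{Ambrosio_Gigli_Savare}. Weak sequential compactness of sublevels is Theorem \ref{th:sublevel_compact}, and the agreement of weak and metric topologies on sublevels is Lemma \ref{th:sublevel_topology}; together they give metric completeness of sublevels. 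Coercivity in the sense required by \cite{Ambrosio_Gigli_Savare} reduces, via Lemma \ref{th:energy_coercivity}, to the fact that $\Energy_o(r)\to+\infty$ as $r\downto 0$ or $r\to+\infty$ together with a uniform lower bound $\Entropy(u)\ge -\integrand_0 \omega_n r^n$ on sublevels of $\Perimeter$, so $\Energy$ is bounded below on bounded balls and the $\varrho^2$ penalty dominates.

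With these ingredients in place, the existence of a discrete solution $\overline{U}_{\boldsymbol{\tau}}$ for each partition $\boldsymbol{\tau}$ is obtained by the direct method at each time step: given $(r^{k-1}, u^{k-1})\in\States$ with $\Energy(r^{k-1}, u^{k-1})<+\infty$, any minimizing sequence for $V\mapsto\Energy(V)+\frac{1}{2\tau_k}\varrho^2((r^{k-1}, u^{k-1}), V)$ stays in a sublevel of $\Energy$ (compare against the competitor $V=(r^{k-1}, u^{k-1})$), hence admits a weakly convergent subsequence by Theorem \ref{th:sublevel_compact}, whose limit minimizes the functional by joint lower semicontinuity; the constraint $(r, u)\in\States$ is preserved by Theorem \ref{th:statespace_closed}. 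Iterating this produces the discrete solution.

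Finally, to extract a GMM starting from $(r_0, u_0)$ one sets $\overline{U}_{\boldsymbol{\tau}}(0)=(r_0, u_0)$, so the initial conditions \eqref{eq:gmm_def} hold trivially, and invokes the abstract convergence part of \cite[Proposition 2.2.3]{Ambrosio_Gigli_Savare}: the standard a priori estimate $\Energy(\overline{U}_{\boldsymbol{\tau}}(t))\le\Energy(r_0,u_0)$ together with the discrete energy-dissipation inequality bounds $\sum_k\varrho^2(\overline{U}_{\boldsymbol{\tau}}(t^{k-1}),\overline{U}_{\boldsymbol{\tau}}(t^k))/\tau_k$ uniformly in $\boldsymbol{\tau}$, so all values $\overline{U}_{\boldsymbol{\tau}}(t)$ lie in a fixed sublevel of $\Energy$; a diagonal extraction across a countable dense set of times, combined with weak sequential compactness and the Hölder-type estimate on the discrete curve, yields a subsequence $\boldsymbol{\tau}_j$ with $|\boldsymbol{\tau}_j|\downto 0$ along which $\overline{U}_{\boldsymbol{\tau}_j}(t)\weaklyto u(t)$ for every $t\ge 0$. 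The only real obstacle is ensuring that the chosen weak topology actually meets the compatibility assumptions of \cite[\S2.1]{Ambrosio_Gigli_Savare}; this is exactly what Theorem \ref{th:statespace_closed}, Theorem \ref{th:sublevel_compact} and Lemma \ref{th:sublevel_topology} were designed to establish, so the proof reduces to citing these and invoking the abstract proposition.
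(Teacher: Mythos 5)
Your proposal is correct and follows essentially the same route as the paper: verify the topological assumptions of \cite[\S2.1]{Ambrosio_Gigli_Savare} (lower semicontinuity of $\Energy$ and $\varrho$, weak sequential compactness of sublevels via Theorem \ref{th:sublevel_compact}, coincidence of weak and metric topologies on sublevels via Lemma \ref{th:sublevel_topology}, closedness of $\States$ via Theorem \ref{th:statespace_closed}, coercivity via Lemma \ref{th:energy_coercivity}), observe that finiteness of $\Energy(r_0,u_0)$ permits restriction to a sublevel where the space is complete, and then invoke \cite[Proposition 2.2.3]{Ambrosio_Gigli_Savare}. The paper leaves the interior of the abstract proposition (the per-step direct method, the a priori estimates, the diagonal extraction) as a black box, whereas you spell it out; this is harmless extra detail and does not change the argument.
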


\section{Interpolation, Convexity and Uniqueness}
\label{s:convexity}

In this section, the convexity properties of the problem are studied. The goal is to prove that \cite[Theorems 2.4.15 and 4.0.4]{Ambrosio_Gigli_Savare} can be applied. This implies a number of properties of the GMM constructed in the previous section, and eliminates the finiteness condition from Theorem \ref{th:GMM_existence}.

The main assumption is convexity of the minimization problem solved in every time step of the discretized problem. 
\begin{assumption}\label{as:convexity}
	For every $w$, $v_0$, $v_1$ in $(\states, d)$, there exists an interpolating curve $\gamma:[0, 1]\to\states$ such that $\gamma(0)=v_0$, $\gamma(1)=v_1$, and the map
	\begin{equation}\label{eq:moreau_yosida}
		v\mapsto\phi(v)+\frac{1}{2h}d^2(w, v)
	\end{equation}
	is $(\frac{1}{h}+\lambda)$-convex along $\gamma$ for every $0<h<\frac{1}{\lambda^-}$.
\end{assumption}
\begin{rem}
	This assumption implies that for $h<\frac{1}{\lambda^-}$, any two points are connected by a curve along which \eqref{eq:moreau_yosida} is strictly convex. In particular, this means that minimizers are unique. In turn, this implies that for $|\mathbf{\tau}|<\frac{1}{\lambda^-}$, discrete solutions associated to $\mathbf{\tau}$ are unique.
\end{rem}

The definition of $\lambda$-convexity along a curve will be recalled below. As is the case with ordinary convexity, this assumption is only useful if $v_0$ and $v_1$ have finite $\phi$-value, but $w$, $v_0$ and $v_1$ can be restricted to an even smaller set, see \cite[Assumption 4.0.1]{Ambrosio_Gigli_Savare} for the details. 

Before showing that $(\States, \varrho)$ and $\Energy$ satisfy this assumption, some finer properties of $(\States, \varrho)$ will be analyzed further.

\bigskip

\subsection{Absolute continuity and the metric derivative}

In order to study interpolating curves, and geodesics of $\varrho$ in particular, it is convenient to first study the notion of absolute continuity and metric derivative, as introduced in \cite[\S1.1]{Ambrosio_Gigli_Savare}. Summarizing, a curve $\gamma$ parametrized on $[0, 1]$ is absolutely continuous with respect to a metric $d$ if there exists a function $g\in L^1([0, 1])$ such that
\begin{equation}\label{eq:absolute_continuity_def}
	d(\gamma(s), \gamma(t))\le\int_s^t g(\tau)\ud{\tau}
\end{equation}
for all $0\le s<t\le 1$. If $\gamma$ is absolutely continuous, the limit
\begin{equation}
	|\gamma'|(\tau)=\lim_{h\to 0}\frac{d(\gamma(\tau+h), \gamma(\tau))}{|h|},
\end{equation}
called the \emph{metric derivative} of $\gamma$, exists for almost every $\tau$. Moreover, it is the smallest $g$ that satisfies \eqref{eq:absolute_continuity_def}. Note that existence of the limit does \emph{not} guarantee absolute continuity of the curve. In this section, the notion of absolute continuity will first be analyzed for $\setdist$ and $W_2$ first. 

\bigskip

First, absolute continuity in $\Cells$ is studied. Using the isometry \eqref{eq:ball_isometry}, it is easy to connect absolute continuity with respect to $\setdist$ to absolute continuity with respect to the Euclidean metric. If $\tau\mapsto r(\tau)$ is absolutely continuous with respect to $\setdist$,
\begin{equation}
	\setdist(r(s), r(t))\le\int_s^t |r'|_\setdist(\tau)\ud{\tau},
\end{equation}
where $|r'|_\setdist$ is the metric derivative of the curve with respect to $\setdist$. By applying the isometry $\iota_n$, the curve $\tau\mapsto\rho(\tau):=\iota_n(r(\tau))$ is absolutely continuous in $[0, \infty)$. It follows that $\rho(\tau)$ is differentiable almost everywhere, and that the metric derivative with respect to the Euclidean metric is just the absolute value $|\diff[\rho(\tau)]{\tau}|$ of the ordinary derivative $\diff[\rho(\tau)]{\tau}$ for almost every $\tau$. Since $\iota_n$ is an isometry, this absolute value must also be equal to $|r'|(\tau)$ for almost every $\tau$. An easy computation now shows that $r(\tau)$ must be differentiable, and
\begin{equation}\label{eq:set_dist_derivative}
	|r'|_\setdist(\tau)=\sqrt{\Perimeter(r(\tau))}\left|\diff[r(\tau)]{\tau}\right|.
\end{equation}
It follows that absolute continuity of $r$ is absolute continuity of $r$ as an $\Reals$-valued function, together with integrability of $\sqrt{\Perimeter(r(\tau))}r'(\tau)$.

There is also a useful characterization of absolute continuity involving a weak formulation. Clearly, if $r_t$ is smooth,
\begin{equation}
	\diff{\tau}\int_0^{r(\tau)}\psi(\rho, \tau)\Perimeter(\rho)\ud{\rho}
		=\int_0^{r(\tau)}\pdiff[\psi(\rho, \tau)]{\tau}\Perimeter(\rho)\ud{\rho}
			+\diff[r(\tau)]{\tau}\psi(r(\tau), \tau))\Perimeter(r(\tau)).
\end{equation}
for any smooth, radially symmetric test function $\psi$. As it turns out, this equation is sufficient to characterize absolute continuity of $r(\tau)$, and provides an expression for the metric derivative.

\begin{lem}\label{th:ball_continuity}
	Let $\tau\mapsto r(\tau)$ be a curve in $\Cells$. Then $r(\tau)$ is absolutely continuous if and only if it is continuous and there exists a function $g$ such that $g(\tau)\sqrt{\Perimeter(r(\tau))}\in L^1_\loc((0, \infty))$, and
	\begin{equation}\label{eq:ball_continuity}
		\int_0^\infty\int_0^{r(\tau)}\pdiff[\psi(\rho, \tau)]{\tau}\Perimeter(\rho)\ud{\rho}\ud{\tau}
			=-\int_0^\infty g(\tau)\psi(r(\tau), \tau)\Perimeter(r(\tau))\ud{\tau}
	\end{equation}
	for all $\psi\in C^\infty_c([0, \infty)\times(0, \infty))$. In this case, $|r'|_\setdist(\tau)=|g(\tau)|\sqrt{\Perimeter(r(t\tau))}$ for almost every $t>0$.
\end{lem}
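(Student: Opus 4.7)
The plan is to prove both directions via integration by parts in the time variable, and to extract the expression for $|r'|_\setdist$ through the isometry $\iota_n$ of \eqref{eq:ball_isometry}, which reduces the problem to absolute continuity of a real-valued function.

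\emph{Forward implication.} If $r$ is $\setdist$-absolutely continuous, the discussion preceding the lemma shows that $r$ is continuous, classically differentiable almost everywhere, and satisfies \eqref{eq:set_dist_derivative}. Setting $g(\tau):=r'(\tau)$, the Leibniz rule for absolutely continuous functions implies that $\tau\mapsto\int_0^{r(\tau)}\psi(\rho,\tau)\Perimeter(\rho)\ud{\rho}$ is absolutely continuous with classical derivative
\[
\int_0^{r(\tau)}\partial_\tau\psi(\rho,\tau)\Perimeter(\rho)\ud{\rho}+r'(\tau)\psi(r(\tau),\tau)\Perimeter(r(\tau)).
\]
Integrating in $\tau$ and using the compact support of $\psi$ in time yields \eqref{eq:ball_continuity}.

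\emph{Reverse implication.} Conversely, assume $r$ is continuous and that \eqref{eq:ball_continuity} holds. Restricting to tensor-product test functions $\psi(\rho,\tau)=\varphi(\rho)\eta(\tau)$ with $\varphi\in C^\infty_c([0,\infty))$ and $\eta\in C^\infty_c((0,\infty))$, the identity reads
\[
\int_0^\infty\eta'(\tau)F_\varphi(\tau)\ud{\tau}=-\int_0^\infty g(\tau)\eta(\tau)\varphi(r(\tau))\Perimeter(r(\tau))\ud{\tau},
\]
where $F_\varphi(\tau):=\int_0^{r(\tau)}\varphi(\rho)\Perimeter(\rho)\ud{\rho}$. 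Hence the distributional derivative of $F_\varphi$ lies in $L^1_\loc$ and $F_\varphi$ is absolutely continuous. Given a compact interval $[a,b]\subset(0,\infty)$, continuity of $r$ provides $R>0$ with $r([a,b])\subset[0,R]$; choosing $\varphi\in C^\infty_c([0,\infty))$ successively equal to $1$ and to $\rho$ on $[0,R]$ yields that both $r^n$ and $r^{n+1}$ are absolutely continuous on $[a,b]$, the latter with classical derivative $(n+1)g(\tau)r(\tau)^n$ almost everywhere.

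\emph{Identification of the metric derivative.} Since $\iota_n(r)=\frac{2\sqrt{n\omega_n}}{n+1}\sqrt{r^{n+1}}$, writing $h:=r^{n+1}$ gives
\[
\frac{|h'(\tau)|}{2\sqrt{h(\tau)}}=\frac{n+1}{2}r(\tau)^{(n-1)/2}|g(\tau)|=\frac{n+1}{2\sqrt{n\omega_n}}|g(\tau)|\sqrt{\Perimeter(r(\tau))},
\]
which lies in $L^1_\loc$ by hypothesis. A decomposition of $\{r>0\}\cap[a,b]$ into its countably many connected components, combined with the continuity of $\iota_n$ at $0$, then shows that $\iota_n\circ r$ is absolutely continuous on $[a,b]$ with classical derivative $\sqrt{\Perimeter(r)}g$. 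Applying the isometry yields $\setdist$-absolute continuity of $r$ with $|r'|_\setdist(\tau)\le|g(\tau)|\sqrt{\Perimeter(r(\tau))}$, and equality follows by combining the forward direction with \eqref{eq:set_dist_derivative}. The hard part will be the last step: passing from absolute continuity of $r^{n+1}$ to that of $\iota_n\circ r=c\sqrt{r^{n+1}}$ on subintervals where $r$ may touch zero, since $x\mapsto\sqrt{x}$ is not Lipschitz at the origin; this is precisely where the hypothesis $g\sqrt{\Perimeter(r)}\in L^1_\loc$ enters, via the explicit bound on $h'/(2\sqrt{h})$ displayed above.
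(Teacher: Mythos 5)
Your forward implication is essentially the paper's argument repackaged: where the paper writes out a difference quotient in $\tau$ and passes to the limit, you invoke the Leibniz rule for an integral with an absolutely continuous variable upper limit; the computation is the same. The reverse direction, however, genuinely departs from the paper. The paper substitutes $\psi=\zeta_k(\tau)\phi(\rho)$ with $\zeta_k\to\chi_{[t_0,t_1]}$ to obtain the identity $\int_{t_0}^{t_1}\phi(r(\tau))g(\tau)\Perimeter(r(\tau))\ud\tau=\int_{r(t_0)}^{r(t_1)}\phi\Perimeter\ud\rho$ for all $\phi\in C^\infty_c([0,\infty))$, and then performs one more approximation to insert the specific choice $\phi=\Perimeter^{-1/2}$, which directly produces the $\setdist$-absolute-continuity estimate $\setdist(r(t_0),r(t_1))\le\int_{t_0}^{t_1}|g|\sqrt{\Perimeter(r)}\ud\tau$ in a single step. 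You instead use only the tame test functions $\phi\equiv 1$ and $\phi=\rho$ on $[0,R]$ to extract absolute continuity of $r^n$ and $r^{n+1}$, and then confront the square-root composition $\iota_n\circ r=c\,(r^{n+1})^{1/2}$ directly, handling the degeneracy at $r=0$ by decomposing $\{r>0\}$ into its open components and invoking continuity of $\iota_n$ at zero. The two approaches trade off the same technical difficulty in different places: the paper's choice $\phi=\Perimeter^{-1/2}$ is unbounded near $\rho=0$ and requires a dominated-convergence approximation (with the domination supplied by exactly the hypothesis $|g|\sqrt{\Perimeter(r)}\in L^1_\loc$), while your route avoids singular test functions entirely but must then show that $\sqrt{\cdot}$ of an AC function is AC, again using the same hypothesis to control $|h'|/(2\sqrt{h})$. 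Your component-decomposition argument is sound as sketched — on each component one has the pointwise estimate $|\iota_n(r(t))-\iota_n(r(s))|\le\int_s^t|g|\sqrt{\Perimeter(r)}\ud\tau$ on compact subintervals, extended to the closure by continuity, and the patches glue because $\iota_n\circ r$ vanishes on the gaps — though you might spell out the gluing for the reader. Net effect: the paper's route is shorter once one thinks to plug in $\Perimeter^{-1/2}$; yours is more elementary in its choice of test functions at the cost of an explicit square-root lemma.
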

\begin{proof}
	Suppose first that $r(\tau)$ is absolutely continuous. Then $|r'|_\setdist(\tau)=|\diff[r(\tau)]{\tau}|\sqrt{\Perimeter(r(\tau))}\in L^1_\loc((0, \infty))$ and,
	for any $\psi\in C^\infty_c([0, \infty)\times(0, \infty))$,
	\begin{equation}\begin{split}
		\int_0^\infty\int_0^{r(\tau)}\pdiff[\psi(\rho, \tau)]{\tau}\Perimeter(\rho)\ud{\rho}\ud{\tau}
		&	=\lim_{h\downto 0}\int_0^\infty\int_0^{r(\tau)}
					\frac{\psi(\rho, \tau)-\psi(\rho, \tau-h)}{h}\Perimeter(\rho)\ud{\rho}\ud{\tau} \\
		&	=\lim_{h\downto 0}\int_0^\infty\frac{1}{h}\left(\int_0^{r(\tau)}\psi(\rho, \tau)\Perimeter(\rho)\ud{\rho}\right. \\
		&	\hspace{3cm}	\left.{}-\int_0^{r(\tau+h)}\psi(\rho, \tau)\Perimeter(\rho)\ud{\rho}\right)\ud{\tau} \\
		&	=-\int_0^{r(\tau)}\diff[r(\tau)]{\tau}\psi(r(\tau), \tau)\Perimeter(r(\tau))\ud{\tau},
	\end{split}\end{equation}
	if $h>0$ is small enough. 
	
	For the opposite implication, assume that $r(\tau)$ is continuous and $g$ satisfies the equation. One would like to substitute $\psi(\rho, \tau)=\chi_{[t_0, t_1]}\phi(\rho)$ with $\phi\in C^\infty_c([0, \infty))$ into \eqref{eq:ball_continuity}, but this function is discontinuous. Let then $\zeta_k:(0, \infty)\to[0, 1]$ be a sequence of smooth functions such that $\zeta_k\to\chi_{[t_0, t_1]}$ pointwise. Substituting $\phi(\rho)\zeta_k(\tau)$ for $\psi$ in \eqref{eq:ball_continuity}, one obtains
	\begin{equation}\begin{split}
		\int_{t_0}^{t_1}\phi(r(\tau))g(\tau)\Perimeter(r(\tau))\ud{\tau}
		&	=\lim_{k\to\infty}\int_0^\infty g(\tau)\phi(r(\tau))\zeta_k(\tau)\Perimeter(r(\tau))\ud{\tau} \\
		&	=\lim_{k\to\infty}\int_0^\infty\zeta_k'(\tau)\int_0^{r(\tau)}\phi(\rho)\Perimeter(\rho)\ud{\rho}\ud{\tau} \\
		&	=\lim_{k\to\infty}\int_0^\infty\zeta_k(\tau)
						\diff{\tau}\left(\int_0^{r(\tau)}\phi(\rho)\Perimeter(\rho)\ud{\rho}\right)\ud{\tau} \\
		&	=\int_{r(t_0)}^{r(t_1)}\phi(\rho)\Perimeter(\rho)\ud{\rho}
	\end{split}\end{equation}
	if $\phi\in C^\infty_c([0, \infty))$ for all $t_0<t_1$. Using another approximation argument, this also holds for $\phi(\rho)=\Perimeter(\rho)^{-\frac{1}{2}}$, which implies that 
	\begin{equation}
		\setdist(r(t_0), r(t_1))
			=\left|\int_{r(t_0)}^{r(t_1)}\sqrt{\Perimeter(\rho)}\ud{\rho}\right|
			\le\left|\int_{t_0}^{t_1}g(\tau)\sqrt{\Perimeter(r(\tau))}\ud{\tau}\right|.
	\end{equation}
	Since $t_0$ and $t_1$ are arbitrary, this implies that $r(\tau)$ is absolutely continuous with metric derivative $|r'|_\setdist(\tau)=|g(\tau)|\sqrt{\Perimeter(r(\tau))}$.
\end{proof}

\bigskip

Next, absolute continuity in $\Profiles$ is considered. For this, it is helpful to remember that $\Profiles$ can be considered as a subspace of the space of probability measures on $\Reals^n$. A study of absolute continuity of curves in the space of measures can be found in \cite[\S8.3]{Ambrosio_Gigli_Savare}. The main theorem \cite[Theorem 8.3.1]{Ambrosio_Gigli_Savare} from this section states that, essentially, the metric derivative of a curve of measures $\tau\mapsto\mu(\tau)$ can be found by solving the continuity equation
\begin{equation}\label{eq:continuity_equation}
	\pdiff[\mu(\tau)]{\tau}+\Div(\mathbf{v}(\tau)\mu(\tau))=0
\end{equation}
in distributional sense. It is shown that, if $\tau\mapsto\mu(\tau)$ is absolutely continuous, the metric derivative at any time $t_0$ is the minimal $L^2(\mu(\tau))$-norm of solutions $\mathbf{v}(\tau)$ of \eqref{eq:continuity_equation}. Note that $\mathbf{v}(\tau)$ can be thought of as the velocity of the mass at a certain position and time. In particular, $\|v(\tau)\|^2_{L^2(\mu(\tau))}$ can be thought of as the kinetic energy.

In order to specialize \cite[Theorem 8.3.1]{Ambrosio_Gigli_Savare} to $\Profiles$, a particular space of test functions is needed: the space of functions $\psi:[0, \infty)\times (0, \infty)$ such that $\phi:\Reals^n\times(0, \infty)$ defined by $\phi(x, \tau)=\psi(|x|, \tau)$ is $C^\infty$ and compactly supported will be denoted by $C^\infty_{c, r}([0, \infty), (0, \infty))$.

\begin{thm}\label{th:continuity_equation}
	Let $\tau\mapsto u(\tau)$ be a weakly continuous curve in $\Profiles$. If $\tau\mapsto u(\tau)$ is absolutely continuous, there exists a Borel function $v:(\rho, \tau)\mapsto v_\tau(\rho)$ such that $v_\tau\in L^2((0, \infty), u(\tau)\Perimeter)$ with $\|v_\tau\|_{L^2((0, \infty), u(\tau)\Perimeter)}\le|u'|(\tau)$ for almost every $\tau$ and
	\begin{equation}\label{eq:rad_symm_cont}
		\int_0^\infty\int_0^\infty\left(\pdiff[\psi(\rho, \tau)]{\tau}
			{}+\pdiff[\psi(\rho, \tau)]{\rho}v_\tau(\rho)\right)u(\rho, \tau)\Perimeter(\rho)\ud{\rho}\ud{\tau}=0
	\end{equation}
	for all $\psi\in C^\infty_{c, r}([0, \infty), (0, \infty))$. Conversely, if there exists $v$ with $v_\tau\in L^2((0, \infty), u(\tau)\Perimeter)$ such that \eqref{eq:rad_symm_cont} holds for every $\psi\in C^\infty_{c, r}([0, \infty), (0, \infty))$, then $\tau\mapsto u(\tau)$ is absolutely continuous and $|u'|(\tau)\le\|v_\tau\|_{L^2((0, \infty), u(\tau)\Perimeter)}$ for almost every $\tau>0$.
\end{thm}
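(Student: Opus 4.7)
The strategy is to reduce the statement to the general continuity-equation theorem in the Wasserstein space $\mathcal{P}_2(\Reals^n)$, namely \cite[Theorem 8.3.1]{Ambrosio_Gigli_Savare}, by lifting each $u(\tau)\in\Profiles$ to the radial probability measure $\mu(\tau):=u(|\cdot|,\tau)\Lebesgue^n$ on $\Reals^n$. A preliminary step is to verify that the Wasserstein distance $W_2$ on $\Profiles$ coincides with $W_2^{\Reals^n}(\mu,\nu)$ for the associated radial measures: the inequality $\le$ is obtained by lifting the optimal one-dimensional map $t_u^w$ from \eqref{eq:rearrangement} to the radial transport $x\mapsto t_u^w(|x|)\,x/|x|$ via a polar change of variables, while the reverse inequality follows from $|T(x)-x|\ge\bigl||T(x)|-|x|\bigr|$ applied to any coupling between $\mu$ and $\nu$. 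Consequently absolute continuity of $\tau\mapsto u(\tau)$ in $(\Profiles,W_2)$ is equivalent to absolute continuity of $\tau\mapsto\mu(\tau)$ in $(\mathcal{P}_2(\Reals^n),W_2)$, with the same metric derivative.

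For the forward direction I would apply \cite[Theorem 8.3.1]{Ambrosio_Gigli_Savare} to $\tau\mapsto\mu(\tau)$, obtaining a Borel velocity field $\mathbf{v}_\tau:\Reals^n\to\Reals^n$ solving the distributional continuity equation on $\Reals^n\times(0,\infty)$ with $\|\mathbf{v}_\tau\|_{L^2(\mu(\tau))}\le|u'|(\tau)$ almost everywhere. Since $\mu(\tau)$ is rotationally invariant, I would symmetrize by averaging over $\operatorname{SO}(n)$ with respect to Haar measure, setting $\bar{\mathbf{v}}_\tau(x):=\int_{\operatorname{SO}(n)}R^T\mathbf{v}_\tau(Rx)\,\ud{R}$. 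The identity $R^T\nabla\phi(Rx)=\nabla(\phi\circ R)(x)$ together with $R$-invariance of $\mu(\tau)$ shows that $\bar{\mathbf{v}}_\tau$ still satisfies the continuity equation, while Jensen's inequality gives $\|\bar{\mathbf{v}}_\tau\|_{L^2(\mu(\tau))}\le\|\mathbf{v}_\tau\|_{L^2(\mu(\tau))}$. Rotational equivariance forces $\bar{\mathbf{v}}_\tau(x)=v_\tau(|x|)\,x/|x|$ for some Borel scalar $v_\tau$, and substituting a radial test function $\phi(x,\tau)=\psi(|x|,\tau)$ with $\psi\in C^\infty_{c,r}([0,\infty),(0,\infty))$ and passing to polar coordinates yields precisely \eqref{eq:rad_symm_cont}; the identity $\|\bar{\mathbf{v}}_\tau\|_{L^2(\mu(\tau))}^2=\int_0^\infty v_\tau^2(\rho)u(\rho,\tau)\Perimeter(\rho)\,\ud{\rho}$ then gives the required bound on $v_\tau$.

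For the converse, given $v_\tau$ as in the statement I would define $\mathbf{v}_\tau(x):=v_\tau(|x|)\,x/|x|$, which is Borel and satisfies $\|\mathbf{v}_\tau\|_{L^2(\mu(\tau))}=\|v_\tau\|_{L^2((0,\infty),u(\tau)\Perimeter)}$. For an arbitrary test function $\phi\in C^\infty_c(\Reals^n\times(0,\infty))$ its rotational average $\bar\phi(x,\tau):=\int_{\operatorname{SO}(n)}\phi(Rx,\tau)\,\ud{R}$ is a radial $C^\infty_c$ function, and using rotational invariance of both $\mu(\tau)$ and $\mathbf{v}_\tau$ a change of variables shows that $\int(\partial_\tau\phi+\nabla\phi\cdot\mathbf{v}_\tau)\,d\mu(\tau)=\int(\partial_\tau\bar\phi+\nabla\bar\phi\cdot\mathbf{v}_\tau)\,d\mu(\tau)$. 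The right-hand side is the polar form \eqref{eq:rad_symm_cont} evaluated on the radial representative of $\bar\phi$, hence vanishes by hypothesis. The converse part of \cite[Theorem 8.3.1]{Ambrosio_Gigli_Savare} then yields absolute continuity of $\mu(\tau)$ with $|\mu'|(\tau)\le\|\mathbf{v}_\tau\|_{L^2(\mu(\tau))}$, which transfers to $u(\tau)$ by the equivalence established at the start.

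The main technical obstacle lies in the two rotational-averaging arguments: verifying that $\bar{\mathbf{v}}_\tau$ retains the Borel measurability and $L^2(\mu(\tau))$-integrability needed to feed back into \cite[Theorem 8.3.1]{Ambrosio_Gigli_Savare}, and identifying the class $C^\infty_{c,r}([0,\infty),(0,\infty))$ precisely with the radial restrictions of $C^\infty_c(\Reals^n\times(0,\infty))$ so that one can pass freely between the $\Reals^n$ and polar formulations of the continuity equation without losing test functions. The equality of the weighted one-dimensional and the ambient $n$-dimensional Wasserstein distances on radial measures, though essentially a change of variables, is another point that must be written out carefully.
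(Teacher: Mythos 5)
Your proposal follows essentially the same route as the paper: both reduce to \cite[Theorem 8.3.1]{Ambrosio_Gigli_Savare} for the lifted radial measures on $\Reals^n$, symmetrize the resulting velocity field by averaging over rotations in the forward direction, and in the converse direction lift $v_\tau$ to the radial vector field $\mathbf{v}_\tau(x)=v_\tau(|x|)x/|x|$ and average an arbitrary test function over spheres to reduce to the radial case. Your version is somewhat more explicit than the paper's at two points — the correct equivariant averaging $\bar{\mathbf{v}}_\tau(x)=\int_{\operatorname{SO}(n)}R^{T}\mathbf{v}_\tau(Rx)\,\ud{R}$ (the paper writes $\mathbf{v}(Rx)$ without the $R^{T}$), and the preliminary verification that the one-dimensional weighted $W_2$ on $\Profiles$ agrees with the ambient $W_2$ on radial measures, which the paper takes for granted — but these are refinements of the same argument, not a different one.
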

\begin{proof}
	Suppose first that $\tau\mapsto u(\tau)$ is absolutely continuous. By \cite[Theorem 8.3.1]{Ambrosio_Gigli_Savare}, there exists a Borel vector field $\mathbf{v}:(x, \tau)\mapsto\mathbf{v}_\tau(x)$ such that $\mathbf{v}_\tau\in L^2(\Reals^n, u)$ with $\|\mathbf{v}_\tau\|_{L^2(\Reals^n, u)}\le|u'|(\tau)$ for almost every $\tau\in (0, \infty)$. It follows that for such a vector field, \cite[(8.3.8)]{Ambrosio_Gigli_Savare} holds with $\phi$ restricted to radially symmetric, smooth, compactly supported functions on $\Reals^n\times (0, \infty)$. Under this restriction, \cite[(8.3.8)]{Ambrosio_Gigli_Savare} becomes invariant under rotations, that is, if $\tilde{\mathbf{v}}_\tau(x)=\mathbf{v}(Rx)$ with $R$ a rotation in $\Reals^n$, $\tilde{\mathbf{v}}$ also satisfies \cite[(8.3.8)]{Ambrosio_Gigli_Savare} for radially symmetric $\phi$. Hence, it may be assumed that $\mathbf{v}_\tau$ is radially symmetric for almost every $\tau>0$. Let $\psi\in C^\infty_{c, r}([0, \infty), (0, \infty))$ be given, and set $\phi(x, \tau):=\psi(|x|, \tau)$. Then, using \cite[(8.3.8)]{Ambrosio_Gigli_Savare},
	\begin{equation}\begin{split}
		0 &	=\int_0^\infty\int_{\Reals^n}\left(\pdiff[\phi(x, \tau)]{\tau}
					{}+\langle\mathbf{v}_\tau(x), \nabla_x\phi(x, \tau)\rangle\right)u(x)\ud{x}\ud{\tau} \\
		&		=\int_0^\infty\int_{\Reals^n}\left(\pdiff[\psi(|x|, \tau)]{\tau}
					{}+\left\langle\mathbf{v}_\tau(x), \frac{x}{|x|}\right\rangle\psi'(|x|, \tau)\right)u(x)\ud{x}\ud{\tau} \\
		&		=\int_0^\infty\int_0^\infty\left(\pdiff[\psi(\rho, \tau)]{\tau}
					{}+\langle\mathbf{v}_\tau(\rho e_1), e_1\rangle\psi'(\rho, \tau)\right)u(\rho)\Perimeter(\rho)\ud{\rho}\ud{\tau},
	\end{split}\end{equation}
	which means that $v_\tau(\rho):=\langle\mathbf{v}_\tau(\rho e_1), e_1\rangle$ solves \eqref{eq:rad_symm_cont}. Moreover, by construction,
	\begin{equation}
		\|v_\tau\|_{L^2((0, \infty), u(\tau)\Perimeter)}
			\le\|\mathbf{v}_\tau\|_{L^2(\Reals^n, u)}
			\le|u'(\tau)|
	\end{equation}
	for almost every $\tau>0$.
	
	For the converse implication, assume that $v_\tau$ satisfies \eqref{eq:rad_symm_cont} for every $\psi\in C^\infty_{c, r}([0, \infty), (0, \infty))$. Setting $\mathbf{v}_\tau(x):=v_\tau(|x|)\frac{x}{|x|}$. It is easily checked that $\mathbf{v}_\tau$ satisfies \cite[(8.3.8)]{Ambrosio_Gigli_Savare} for all radially symmetric test functions $\phi$. If $\phi\in C^\infty_c(\Reals^n\times(0, \infty))$ is a general test function, the function $\psi$, defined by
	\begin{equation}
		\psi(\rho, \tau):=\frac{1}{\Perimeter(\rho)}\int_{\bdry B_\rho}\phi(x, \tau)\ud{\Hausdorff^{n-1}x}
	\end{equation}
	is in $C^\infty_{c, r}([0, \infty), (0, \infty))$. By definition of $\mathbf{v}_\tau$ and $\psi$, it follows that
	\begin{multline}
		\int_0^\infty\int_{\Reals^n}\left(\pdiff[\phi(x, \tau)]{\tau}
					{}+\langle\mathbf{v}_\tau(x), \nabla_x\phi(x, \tau)\rangle\right)u(x)\ud{x}\ud{\tau} \\
			=\int_0^\infty\int_0^\infty\left(\pdiff[\psi(x, \tau)]{\tau}
					{}+\pdiff[\psi(\rho, \tau)]{\rho}v_\tau(\rho)\right)u(\rho)\Perimeter(\rho)\ud{\rho}\ud{\tau}
			=0,
	\end{multline}
	which means that $\mathbf{v}$ satisfies \cite[(8.3.8)]{Ambrosio_Gigli_Savare} for any test function. Hence, $\tau\mapsto u(\tau)$ is absolutely continuous, and $|u'(\tau)|\le\|\mathbf{v}_\tau\|_{L^2(\Reals^n, u)}=\|v_\tau\|_{L^2((0, \infty), u(\tau)\Perimeter)}$.
\end{proof}
\begin{rem}\label{rem:minimal_norm_solution}
	If follows in particular that, if $\tau\mapsto u(\tau)$ is absolutely continuous, there exists $v$ satisfying \eqref{eq:continuity_equation} such that $\|v_\tau\|_{L^2((0, \infty), u\Perimeter)}=|u'|(\tau)$ for almost all $\tau$. By \cite[Proposition 8.4.5]{Ambrosio_Fusco_Pallara}, this $v_\tau$ is uniquely determined for almost every $\tau$.
\end{rem}

Note the similarity between Lemma \ref{th:ball_continuity} and the theorem cited above: both relate absolute continuity and the metric derivative to the solvability and a norm of solutions of a certain weakly formulated equation. Both equations will prove useful later.

Intuitively, the optimal transport map and the solution of the continuity equation are related: the former is displacement of mass, the latter is velocity of mass. This intuition can be made precise using \cite[Propisition 8.4.6]{Ambrosio_Gigli_Savare}, where it is shown that if $\tau\mapsto u(\tau)$ is absolutely continuous,
\begin{equation}\label{eq:optimal_map_velocity}
	\lim_{h\to 0}\frac{t_{u(\tau)}^{u(\tau+h)}-\id}{h}=v_\tau
\end{equation}
for almost every $\tau$, where $v_\tau$ is the solution of \eqref{eq:continuity_equation} such that $\|v_\tau\|_{L^2((0, \infty), u(\tau)\Perimeter)}=|u'|(\tau)$ for almost every $\tau$.

\bigskip

Absolute continuity in $\States$ can now be characterized in terms of absolute continuity in $\Cells$ and $\Profiles$. That is,

\begin{lem}\label{th:states_abs_cont}
	A curve $\tau\mapsto(r(\tau), u(\tau))$ in $\Cells\times\Profiles$ is absolutely continuous if and only if the curves $\tau\mapsto r(\tau)$ and $\tau\mapsto u(\tau)$ are absolutely continuous in $\Cells$ and $\Profiles$, respectively. In this case,
	\begin{equation}
		|(r, u)'|(\tau)=\left(|r'|^2_\setdist(\tau)+\frac{|u'|_{W_2}^2(\tau)}{\diffusion}\right)^\frac{1}{2}
	\end{equation}
	for almost all $\tau$.
\end{lem}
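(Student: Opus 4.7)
The plan is to treat this as a standard product-metric argument, so the two implications decouple and the metric-derivative formula follows from taking a pointwise limit.

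For the forward direction, I would use the two elementary inequalities
\begin{equation}
\setdist(r(s), r(t)) \le \varrho((r(s), u(s)), (r(t), u(t))), \qquad
W_2(u(s), u(t)) \le \sqrt{\diffusion}\,\varrho((r(s), u(s)), (r(t), u(t))),
\end{equation}
both of which follow directly from the definition of $\varrho$. If $g \in L^1_\loc$ dominates the joint distance as in \eqref{eq:absolute_continuity_def}, then the same $g$ (resp.\ $\sqrt{\diffusion}\,g$) dominates $\setdist(r(s),r(t))$ and $W_2(u(s),u(t))$, giving absolute continuity of each component together with the pointwise bounds $|r'|_\setdist(\tau) \le |(r,u)'|(\tau)$ and $|u'|_{W_2}(\tau) \le \sqrt{\diffusion}\,|(r,u)'|(\tau)$ a.e.

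For the converse, suppose both $r$ and $u$ are absolutely continuous with metric derivatives $|r'|_\setdist$ and $|u'|_{W_2}$. Using $\sqrt{a^2+b^2} \le a+b$ for $a,b\ge 0$,
\begin{equation}
\varrho((r(s), u(s)), (r(t), u(t))) \le \setdist(r(s), r(t)) + \frac{W_2(u(s), u(t))}{\sqrt{\diffusion}} \le \int_s^t\left(|r'|_\setdist(\sigma)+\frac{|u'|_{W_2}(\sigma)}{\sqrt{\diffusion}}\right)\ud{\sigma},
\end{equation}
so the joint curve is absolutely continuous, with $g := |r'|_\setdist + |u'|_{W_2}/\sqrt{\diffusion}$ serving as an $L^1_\loc$-majorant.

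To get the sharp identity for $|(r, u)'|(\tau)$, I would then compute the metric derivative directly from its limit definition, using that for any $\tau$ where $|r'|_\setdist(\tau)$ and $|u'|_{W_2}(\tau)$ both exist (which holds a.e.\ by the previous step),
\begin{equation}
\frac{\varrho((r(\tau+h), u(\tau+h)), (r(\tau), u(\tau)))^2}{h^2}
=\frac{\setdist^2(r(\tau+h), r(\tau))}{h^2}+\frac{1}{\diffusion}\frac{W_2^2(u(\tau+h), u(\tau))}{h^2}.
\end{equation}
Letting $h\to 0$, the right-hand side converges to $|r'|_\setdist^2(\tau) + |u'|_{W_2}^2(\tau)/\diffusion$, so the limit on the left exists and equals the claimed value. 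Since $|(r, u)'|$ is the a.e.\ limit of the quotient on the left by definition (once we know the curve is absolutely continuous), this gives the asserted equality.

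The main subtlety is a matter of bookkeeping rather than a genuine obstacle: one must be careful that the exceptional null sets for existence of $|r'|_\setdist$ and $|u'|_{W_2}$ can be combined, and that the $h\to 0$ limit argument is applied on the intersection of these full-measure sets. Everything else reduces to the two elementary inequalities between the metrics.
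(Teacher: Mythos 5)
Your proof is correct and follows essentially the same route as the paper's. The forward direction uses exactly the same two componentwise inequalities to transfer an $L^1$ majorant from the joint metric to each factor. For the converse, you bound $\varrho$ by $\setdist + W_2/\sqrt{\diffusion}$ and take $g = |r'|_\setdist + |u'|_{W_2}/\sqrt{\diffusion}$ as the majorant, whereas the paper keeps everything under the square root and absorbs the cross terms into a factor of $2$, producing the majorant $2\bigl(|r'|_\setdist^2 + |u'|_{W_2}^2/\diffusion\bigr)^{1/2}$. These are interchangeable (neither is claimed to be the metric derivative itself, only an $L^1$ dominating function). The final identity is obtained in both cases by passing to the limit in the squared difference quotient on the full-measure set where both component metric derivatives exist, which you handle correctly; the fact that the metric derivative exists a.e.\ once absolute continuity is established is precisely what lets you identify the two limits. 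No gaps.
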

\begin{proof}
	Let $\tau\mapsto (r(\tau), u(\tau))$ be a curve in $\Cells\times\Profiles$. By definition, 
	\begin{gather}
		\setdist(r(s), r(t))\le\varrho((r(s), u(s)), (r(t), u(t))), \\
		W_2(u(s), u(t))\le\sqrt{\diffusion}\varrho((r(s), u(s)), (r(t), u(t))),
	\end{gather}
	which means that absolute continuity of $\tau\mapsto (r(\tau), u(\tau))$ implies absolute continuity of $\tau\mapsto r(\tau)$ and $\tau\mapsto u(\tau)$. 
	
	Conversely, if both $\tau\mapsto r(\tau)$ and $\tau\mapsto u(\tau)$ are absolutely continuous,
	\begin{equation}\begin{split}
		\varrho((r(s), u(s)), (r(t), u(t)))
		&	=\left(\setdist^2(r(s), r(t))+\frac{W_2^2(u(s), u(t))}{\diffusion}\right)^\frac{1}{2} \\
		&	\le\left(\left(\int_s^t|r'|_\setdist(\tau)\ud{\tau}\right)^2
				{}+\left(\int_s^t\frac{|u'|_{W_2}(\tau)}{\sqrt{\diffusion}}\ud{\tau}\right)^2\right)^\frac{1}{2} \\
		&	\le 2\int_s^t\left(|r'|_\setdist^2(\tau)+\frac{|u'|_{W_2}^2(\tau)}{\diffusion}\right)^\frac{1}{2}\ud{\tau}
	\end{split}\end{equation}
	which means that $\tau\mapsto(r(\tau), u(\tau))$ is absolutely continous.
	
	Finally, if $\tau\mapsto(r(\tau), u(\tau))$ is absolutely continous,
	\begin{equation}\begin{split}
		|(r, u)'|(\tau)
		&	=\lim_{h\to 0}\frac{\varrho((r(\tau+h), u(\tau+h)), (r(\tau), u(\tau)))}{|h|} \\
		&	=\left(\left(\lim_{h\to 0}\frac{\setdist(r(\tau+h), r(\tau))}{|h|}\right)^2
					+\frac{1}{\diffusion}\left(\lim_{h\to 0}\frac{W_2(u(\tau+h), u(\tau))}{|h|}\right)^2\right)^\frac{1}{2} \\
		&	=\left(|r'|_\setdist^2(\tau)+\frac{|u'|_{W_2}(\tau)}{\diffusion}\right)^\frac{1}{2}
	\end{split}\end{equation}
	where, for almost every $\tau$, existence of all limits is guaranteed by absolute continuity.
\end{proof}

Together with the results about absolute continuity in $\Cells$ and $\Profiles$, this lemma is the main tool to study absolutely continuous curves in $\States$.

\bigskip

\subsection{Constant speed geodesics}

In a metric space $\mathcal{S}$, a constant speed geodesic is by defintion a curve $\gamma:[0, 1]\to\mathcal{S}$ satisfying
\begin{equation}\label{eq:geodesic_def}
	d(\gamma(s), \gamma(t))=(t-s)d(\gamma(0), \gamma(1))
\end{equation}
for any $0\le s\le t\le 1$. Note that this is different from geodesics on a Riemannian manifold: geodesics are parametrized by constant velocity on the unit interval, instead of parametrized by length. By the triangle inequality, it is sufficient to show only 
\[
	d(\gamma(s), \gamma(t))\le(t-s)d(\gamma(0), \gamma(1)).
\]
It is also clear that a constant speed geodesic is absolutely continuous, and the metric derivative is equal to $d(\gamma(0), \gamma(1))$ almost everywhere. The converse is also true: if $\gamma$ is absolutely continuous, and 
\[
	|\gamma'|_d(t)\le d(\gamma(0), \gamma(1)),
\]
then $\gamma$ is a constant speed geodesic. From this observation, it follows that $\tau\mapsto(r(\tau), u(\tau))$ is a constant speed geodesic if and only if the maps $\tau\mapsto r(\tau)$ and $\tau\mapsto u(\tau)$ are.

\begin{lem}\label{th:states_geodesics}
	A curve $\tau\mapsto\mapsto(r(\tau), u(\tau))$ in $\Cells\times\Profiles$ is a constant speed geodesic if and only if $\tau\mapsto r(\tau)$ and $\tau\mapsto u(\tau)$ are.
\end{lem}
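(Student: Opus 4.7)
The plan is to reduce the claim to Lemma \ref{th:states_abs_cont}, which already characterises absolute continuity and the metric derivative on $\Cells\times\Profiles$ in terms of the factors.

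The ``if'' direction is essentially a one-line computation. If $\tau\mapsto r(\tau)$ and $\tau\mapsto u(\tau)$ are constant speed geodesics, then plugging $\setdist(r(s), r(t))=(t-s)\setdist(r(0), r(1))$ and $W_2(u(s), u(t))=(t-s)W_2(u(0), u(1))$ into the definition of $\varrho$ gives
\begin{equation}
\varrho^2((r(s), u(s)), (r(t), u(t)))
 =\setdist^2(r(s), r(t))+\frac{W_2^2(u(s), u(t))}{\diffusion}
 =(t-s)^2\varrho^2((r(0), u(0)), (r(1), u(1))),
\end{equation}
and taking square roots yields the geodesic identity for $(r, u)$.

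For the ``only if'' direction, assume $(r, u)$ is a constant speed geodesic. In particular it is absolutely continuous, so by Lemma \ref{th:states_abs_cont} both $\tau\mapsto r(\tau)$ and $\tau\mapsto u(\tau)$ are absolutely continuous and
\begin{equation}
|(r, u)'|^2(\tau)=|r'|_\setdist^2(\tau)+\frac{|u'|_{W_2}^2(\tau)}{\diffusion}\qquad\text{for a.e. }\tau.
\end{equation}
Because $(r, u)$ has constant metric derivative equal to $\varrho((r(0), u(0)), (r(1), u(1)))$, integrating and combining with the definition of absolute continuity (applied separately to $r$ and $u$) and Cauchy--Schwarz yields
\begin{equation}
\setdist^2(r(0), r(1))+\frac{W_2^2(u(0), u(1))}{\diffusion}
 \le \int_0^1\left(|r'|_\setdist^2+\frac{|u'|_{W_2}^2}{\diffusion}\right)\ud{\tau}
 =\setdist^2(r(0), r(1))+\frac{W_2^2(u(0), u(1))}{\diffusion}.
\end{equation}
Equality forces equality in each Cauchy--Schwarz step, so $|r'|_\setdist$ and $|u'|_{W_2}$ are a.e.\ constant, and matches the endpoint distance: $|r'|_\setdist(\tau)=\setdist(r(0), r(1))$ and $|u'|_{W_2}(\tau)=W_2(u(0), u(1))$ a.e.

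The final step is to promote these a.e.\ identities to the geodesic identity on $[0, 1]$. Absolute continuity gives the upper bound $\setdist(r(s), r(t))\le(t-s)\setdist(r(0), r(1))$, and the reverse bound follows from the triangle inequality applied to the decomposition $r(0)\to r(s)\to r(t)\to r(1)$ together with the just-obtained upper bounds on $\setdist(r(0), r(s))$ and $\setdist(r(t), r(1))$. The same argument applied to $u$ completes the proof. The main obstacle is the splitting of information between the two factors in this ``only if'' direction; once Lemma \ref{th:states_abs_cont} is invoked, the decoupling is driven purely by the equality case of Cauchy--Schwarz, which is the key observation.
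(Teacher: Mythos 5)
Your proof is correct and follows essentially the same route as the paper's: for the ``only if'' direction you invoke Lemma \ref{th:states_abs_cont}, chain the two $\int_0^1 f^2 \ge (\int_0^1 f)^2$ inequalities (you call it Cauchy--Schwarz, the paper calls it Jensen; they are the same estimate), and read off equality. Your ``if'' direction is actually a touch more direct than the paper's -- you plug the geodesic identities straight into the definition of $\varrho$ rather than passing through the metric-derivative characterization via Lemma \ref{th:states_abs_cont} -- but this is a cosmetic difference, not a different argument.
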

\begin{proof}
	Suppose first that $\tau\mapsto r(\tau)$ and $\tau\mapsto u(\tau)$ are constant speed geodesics. Then, using Lemma \ref{th:states_abs_cont}, $\tau\mapsto(r(\tau), u(\tau))$ is absolutely continuous, and
	\begin{equation}\begin{split}
		|(r, u)'|^2(\tau)
		&	=|r'|_\setdist^2(\tau)+\frac{|u'|_{W_2}^2(\tau)}{\diffusion}
			=\setdist^2(r(0), r(1))+\frac{W_2^2(u(0), u(1))}{\diffusion} \\
		&	=\varrho^2((r(0), u(0)), (r(1), u(1))).
	\end{split}\end{equation}
	
	Conversely, suppose that $\tau\mapsto (r(\tau), u(\tau))$ is a constant speed geodesic. Using Lemma \ref{th:states_abs_cont} and Jensen's inequality,
	\begin{equation}\begin{split}
		\varrho^2((r(0), u(0)), (r(1), u(1)))
		&	=\int_0^1|r'|_\setdist^2(\tau)+\frac{|u'|_{W_2}^2(\tau)}{\diffusion}\ud{\tau} \\
		&	\ge\left(\int_0^1|r'|_\setdist(\tau)\ud{\tau}\right)^2
				{}+\frac{1}{\diffusion}\left(\int_0^1|u'|_{W_2}(\tau)\ud{\tau}\right)^2 \\
		&	\ge\varrho^2((r(0), u(0)), (r(1), u(1)))
	\end{split}\end{equation}
	which means that all inequalities are in fact equalities. Then $\tau\mapsto|r'|(\tau)$ and $\tau\mapsto|u'|(\tau)$ must be constant for almost all $\tau$. By the second (in)equality, these constants must be $\setdist(r(0), r(1))$ and $W_2(u(0), u(1))$, respectively. It follows that $\tau\mapsto r(\tau)$ and $\tau\mapsto u(\tau)$ are constant speed geodesics.
\end{proof}

With this lemma at hand, it would seem that the constant speed geodesics of $\setdist$ and $W_2$ can be studied separately. This is not the case, as $\States$ is a \emph{subspace} of $\Cells\times\Profiles$. After characterizing the constant speed geodesics of $\setdist$ and $W_2$, the resulting geodesic in $\Cells\times\Profiles$ will be studied.

Using the isometry \eqref{eq:ball_isometry}, it is easy to show that the constant speed geodesics of $\setdist$ are given by
\begin{equation}\label{eq:ball_geodesic}
	r(\tau)=\left((1-\tau)r(0)^\frac{n+1}{2}+\tau r(1)^\frac{n+1}{2}\right)^\frac{2}{n+1}\ge(1-\tau)r(0)+\tau r(1).
\end{equation}

It is shown in \cite[Theorem 7.2.2]{Ambrosio_Gigli_Savare} that the constant speed geodesics of the Wasserstein metric are given by
\begin{equation}\label{eq:Wasserstein_geodesic}
	u(\tau)=\left((1-\tau)\id+\tau t_{u(0)}^{u(1)}\right)_\#u(0),
\end{equation}
where $t_{u(0)}^{u(1)}$ is the optimal transport map from $u(0)$ to $u(1)$. This expression has a nice interpretation: the transport map $t_{u(0)}^{u(1)}$ tells where the mass at a certain position has to go to change the profile $u(0)$ into $u(1)$. Thus \eqref{eq:Wasserstein_geodesic} is the evolution where all mass travels from its initial position to its destination at a constant speed.

The characterization \eqref{eq:rearrangement} of the optimal transport map, which followes from the radial symmetry, leads to the following characterization of a constant speed geodesic, which is also shown in \cite[(7.2.8)]{Ambrosio_Gigli_Savare}
\begin{equation}\label{eq:geodesic_rearrangement}
	F_{u(\tau)}^{-1}=(1-\tau)F_{u(0)}^{-1}+\tau F_{u(1)}^{-1}
\end{equation}
where $F_u$ is defined as in \eqref{eq:cumulative_density}.

\bigskip

As noted above in Lemma \ref{th:states_geodesics}, the constant speed geodesics of $\Cells\times\Profiles$ curves $\tau\mapsto(r(\tau), u(\tau))$ with $r$ and $u$ as in \eqref{eq:ball_geodesic} and \eqref{eq:Wasserstein_geodesic}. By the inequality in \eqref{eq:ball_geodesic}, $(r(\tau), u(\tau))\in\States$ if this is the case for $\tau=0$ and $\tau=1$: geodesics between points in $\States$ do not leave $\States$. Note, however, that the geodesics have a peculiar property. Since the inequality in \eqref{eq:ball_geodesic} is strict for $0<\tau< 1$ unless $r(0)=r(1)$, the support of $u(\tau)$ will be strictly smaller than $B_{r(\tau)}$, even if $u(0)$ and $u(1)$ are positive throughout $B_{r(0)}$ and $B_{r(1)}$, respectively.

\subsection{$\lambda$-convexity}

As announced above, the concept of $\lambda$-convexity will be used. $\lambda$-convexity extends the notion of ordinary convexity, which will be equivalent to $0$-convexity. The number $\lambda$ can be interpreted as a measure of how convex a functional is. 

Lacking a linear structure, convexity of a functional on a metric space has to be defined using curves. A functional $\phi$ on a metric space $(\mathcal{S}, d)$ is said to be $\lambda$-convex along $\gamma:[0, 1]\to\mathcal{S}$ if
\begin{equation}\label{eq:lambda_convex_along_curve}
	\phi(\gamma(\tau))
		\le(1-\tau)\phi(\gamma(0))+\tau\phi(\gamma(1))-\frac{\lambda}{2}\tau(1-\tau)d^2(\gamma(0), \gamma(1))
\end{equation}
for all $\tau\in[0, 1]$. Clearly, it cannot be expected that a functional $\phi$ is $\lambda$-convex along \emph{all} curves. Inspired by convexity of a function on Euclidean space, which is equivalent to $0$-convexity along straight lines, one usually aks whether a functional is $\lambda$-convex along geodesics.

\bigskip

With the results from the previous section in mind, the convexity of $\Energy$ and $\varrho^2$ can be studied term by term. It will be shown that $\Energy$ is $\lambda$-convex along geodesics, and that $(r, u)\mapsto\frac{1}{2}\varrho^2((r, u), (s, w))$ is $1$-convex along geodesics.

First of all, $\Entropy$ is $0$-convex along geodesics if the map
\[
	z\mapsto z^n\integrand\left(z^{-n}\right)
\]
is convex and nonincreasing, as is noted in \cite[Proposition 9.3.9]{Ambrosio_Gigli_Savare} and was first shown by McCann \cite[Proposition 1.2]{McCann}. Similar to the proof of Lemma \ref{th:energy_coercivity}, this is implied by
\begin{equation}
	\diff{z}\left(z^n\integrand\left(z^{-n}\right)\right)=-n z^{n-1}\hat\integrand\left(z^{-n}\right),
\end{equation}
which is an increasing nonpositive function. It will turn out that this property of $\integrand$ also plays a role when computing the local slope of $\Entropy$.

The convexity of $W_2^2$ can easily be checked in the radially symmetric situation. From \eqref{eq:geodesic_rearrangement} and \eqref{eq:one_dim_wasserstein}, it follows immediately that
\begin{equation}\label{eq:wasserstein_convexity}
	W_2^2(u(\tau), w)
		=\int_{(0, 1)}|(1-\tau)U_0^{-1}(\sigma)+\tau U_1^{-1}(\sigma)-W^{-1}(\sigma)|^2\ud{\sigma}.
\end{equation}
A straightforward calculation shows that the map $x\mapsto\frac{1}{2}|x-y|^2$ is $1$-convex, as is also shown in \cite[Remark 2.4.4]{Ambrosio_Gigli_Savare}. Combining this with the above expression for $W_2^2$ implies that $w\mapsto\frac{1}{2}W_2^2(u, w)$ is $1$-convex along geodesics for any $u\in\Profiles$. Note that the special properties of $W_2$ for radially symmetric profiles are used in the proof. This is really necessary: in the general case, the Wasserstein distance is \emph{not} $1$-convex. A straightforward counterexample is given in \cite[Example 9.1.5]{Ambrosio_Gigli_Savare}. Actually, it is shown in \cite[Theorem 7.3.2]{Ambrosio_Gigli_Savare} that the opposite inequality holds.

The convexity of $\Perimeter$ can be checked using the isometry \eqref{eq:ball_isometry}: $\lambda$-convexity of $\Perimeter$ is equivalent to $\lambda$-convexity of the map
\[
	s\mapsto \theta_ns^\frac{2n-2}{n+1}
\]
where $\theta_n>0$ is a constant depending on the dimension. Note that the exponent is always between 0 and 2. The second derivative of this map is
\[
	s\mapsto\theta_n\frac{(2n-2)(n-3)}{(n+1)^2} s^\frac{-4}{n+1}.
\]
Using \cite[Remark 2.4.4]{Ambrosio_Gigli_Savare}, this means that $\Perimeter$ is $0$-convex if $n\ge 3$. Note that $\perimeter_2$ is not $\lambda$-convex along geodesics for any $\lambda$ since the second derivative is not bounded from below. However, if $s$ is bounded away from zero, the second derivative is bounded from below, and $\Perimeter$ is $\lambda$-convex, with $\lambda$ equal to the infimum of the second derivative. As was shown in the previous section, the radius $r$ is bounded away from $0$ in any sublevel of $\Energy$, which means that restricting the problem to a sublevel means that $\Perimeter$ is $\lambda$-convex for some $\lambda<0$. Note that a similar argument can be used for $n>3$ to obtain $\lambda>0$ on sublevels.

Finally, again using the isometry \eqref{eq:ball_isometry} and \cite[Remark 2.4.4]{Ambrosio_Gigli_Savare}, $s\mapsto\frac{1}{2}\setdist^2(r, s)$ is 1-convex along geodesics for all $r\in\Cells$.

Collecting the above results, one finds

\begin{lem}
	The map $(s, w)\mapsto\frac{1}{2}\varrho^2((r, u), (s, w))$ is $1$-convex along geodesics. Moreover, if $n\ge 3$, $\Energy$ is $0$-convex along geodesics. In case $n=2$, $\Energy$ is $\lambda$-convex along geodesics with $\lambda<0$ on its sublevels.
	
	In particular, Assumption \ref{as:convexity} holds. That is, given $(s, w), (r(0), u(0)), (r(1), u(1))$, the map
	\begin{equation}
		(r, u)\mapsto\Energy(r, u)+\frac{1}{2h}\varrho^2((s, w), (r, u))
	\end{equation}
	is convex along the constant speed geodesic $\tau\mapsto(r(\tau), u(\tau))$, defined by \eqref{eq:ball_geodesic}, \eqref{eq:Wasserstein_geodesic}.
\end{lem}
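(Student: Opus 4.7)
The plan is to combine the ingredients accumulated in this subsection, using Lemma~\ref{th:states_geodesics} to reduce convexity along geodesics in $\States$ to componentwise convexity in $\Cells$ and $\Profiles$, and then to verify the required inequality by adding the one-dimensional convexity estimates already established.

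For the squared distance, I would exploit the identity
\[
	\tfrac{1}{2}\varrho^2((r, u), (s, w))
		= \tfrac{1}{2}\setdist^2(r, s) + \tfrac{1}{2\diffusion} W_2^2(u, w)
\]
together with Lemma~\ref{th:states_geodesics}, which says that a constant speed geodesic $\tau\mapsto(r(\tau), u(\tau))$ in $\States$ projects onto constant speed geodesics of $\setdist$ and $W_2$ separately. It then suffices to combine the $1$-convexity of $s\mapsto\tfrac{1}{2}\setdist^2(r, s)$ along \eqref{eq:ball_geodesic} with the $1$-convexity of $w\mapsto\tfrac{1}{2}W_2^2(u, w)$ along \eqref{eq:Wasserstein_geodesic}, both recorded earlier, and add the two resulting inequalities with weights $1$ and $1/\diffusion$.

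For $\Energy=\Perimeter+\Entropy$, I would handle the two summands separately. The internal energy $\Entropy$ is $0$-convex along geodesics by McCann's criterion, whose hypothesis reduces here to convexity and monotonicity of $z\mapsto z^n\integrand(z^{-n})$ and was already verified via $\diff{z}\bigl(z^n\integrand(z^{-n})\bigr)=-nz^{n-1}\hat\integrand(z^{-n})$. For the perimeter term, pulling back by the isometry \eqref{eq:ball_isometry} reduces the question to $\lambda$-convexity of $s\mapsto\theta_n s^{(2n-2)/(n+1)}$, whose second derivative is a multiple of $s^{-4/(n+1)}$ with prefactor proportional to $(n-3)$. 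For $n\ge 3$ this gives $0$-convexity globally; for $n=2$ the prefactor is negative, but on the sublevel $\Energy^{-1}(-\infty, M]$ Lemma~\ref{th:energy_coercivity} forces $r\ge c(M)>0$, so the second derivative is bounded below and $\Perimeter$ is $\lambda(M)$-convex for some $\lambda(M)<0$. Summing then yields the claimed $\lambda$-convexity of $\Energy$.

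The last sentence of the lemma is obtained by simple addition: along the geodesic from Lemma~\ref{th:states_geodesics}, the map $(r, u)\mapsto\Energy(r, u)+\tfrac{1}{2h}\varrho^2((s, w), (r, u))$ is $(\lambda+\tfrac{1}{h})$-convex, which is exactly what Assumption~\ref{as:convexity} demands. The main point requiring genuine care, as I see it, is not the convexity bookkeeping itself but verifying that the candidate geodesic stays inside the subspace $\States$, i.e.\ that $\mathrm{supp}\,u(\tau)\subseteq[0, r(\tau)]$. This is precisely where the strict inequality in \eqref{eq:ball_geodesic} plays its role, since the displacement map $(1-\tau)\id+\tau\,t^{u(1)}_{u(0)}$ sends the support of $u(0)$ into $[0, (1-\tau)r(0)+\tau r(1)]\subseteq[0, r(\tau)]$, so that the constant speed geodesic in $\Cells\times\Profiles$ automatically remains in $\States$ whenever its endpoints do.
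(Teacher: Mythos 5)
Your proposal is correct and follows essentially the same route as the paper: componentwise reduction via Lemma~\ref{th:states_geodesics}, the one-dimensional $1$-convexity of $\tfrac{1}{2}\setdist^2$ and $\tfrac{1}{2}W_2^2$, McCann's displacement-convexity criterion for $\Entropy$, the isometry computation for $\Perimeter$ with the coercivity estimate on sublevels when $n=2$, and then simple addition. Your closing remark that the geodesic stays in $\States$ because of the inequality in \eqref{eq:ball_geodesic} is exactly the observation the paper makes just before stating the lemma.
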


One can ask if the lack of convexity in case $n=2$ is due to the odd shape of the geodesics of $\setdist$. Obviously, the convexity of $\Perimeter$ is much better along linear interpolants than along geodesics: even if $n=2$, it is immediately clear that $\Perimeter$ is $0$-convex without having to restrict to a sublevel of $\Energy$. Unfortunately, $\setdist^2$ is not $1$-convex along linear interpolants. Moreover, the profit from replacing geodesics with linear interpolants is somewhat disappointing: even in case $n>3$, it is not possible to obtain $\lambda$-convexity for $\lambda>0$, which would be the metric equivalent of strict convexity. Therefore, it does not seem to be beneficial to use other curves instead of geodesics.

\bigskip

Having shown that Assumption \ref{as:convexity} holds, \cite[Theorems 2.4.15 and 4.0.4]{Ambrosio_Gigli_Savare} can be applied. Since the set of $(r, u)$ with $\Energy(r, u)<+\infty$ is dense in $\States$, no condition on the initial value is needed anymore.

\begin{thm}\label{th:convexity_results}
	For any $(r_0, u_0)\in\States$,
	\begin{itemize}
	\item	There exists a unique mimimizing movement $t\mapsto(r(t), u(t))$ for $\Energy$ starting at $(r_0, u_0)$.
	\item	$(r(t), u(t))$ is also the unique generalized minimizing movement for $\Energy$ starting at $(r_0, u_0)$.
	\item	$(r, u)$ is a locally Lipschitz curve of maximal slope with $|\partial\Energy|(r(t), u(t))<+\infty$ for $t>0$.
	\item $(r, u)$ is the unique solution of the evolution variational inequality
				\begin{equation}
					\frac{1}{2}\diff{t}\varrho^2((r(t), u(t)), (s, w))+\frac{\lambda}{2}\varrho^2((r(t), u(t)), (s, w))+\Energy(r(t), u(t))\le\Energy(s, w)
				\end{equation}
				for all $(s, w)\in\States$ such that $\Energy(s, w)<+\infty$ and almost all $t>0$.
	\item	For $(r_0, u_0)$ and $(s_0, w_0)$, the minimizing movements $(r, u)$ and $(s, w)$ for $\Energy$ starting at $(r_0, u_0)$ and $(s_0, w_0)$, respectively, satisfy
				\begin{equation}
					\varrho((r(t), u(t)), (s(t), w(t)))\le e^{-\lambda t}\varrho((r_0, u_0), (s_0, w_0)).
				\end{equation}
	\end{itemize}
	If $\Energy(r_0, u_0)<+\infty$, the right metric derivative
	\begin{equation}
		|(r, u)'_+|(t):=\lim_{h\downto 0}\frac{\varrho((r(t+h), u(t+h)), (r(t), u(t)))}{h},
	\end{equation}
	and the equation
	\begin{equation}\label{eq:strong_maximal_slope}
		\diff{t_+}\Energy(r(t), u(t))
			=-|\partial\Energy|^2(r(t), u(t))
			=-|(r, u)'_+|^2(t)
			=-|\partial\Energy|(r(t), u(t))|(r, u)'_+|(t)
	\end{equation}
	holds for all $t>0$. If, additionally, $|\partial\Energy|(r_0, u_0)<+\infty$, \eqref{eq:strong_maximal_slope} also holds for $t=0$.
\end{thm}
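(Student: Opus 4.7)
The plan is to reduce everything to a direct invocation of \cite[Theorems 2.4.15 and 4.0.4]{Ambrosio_Gigli_Savare} once the hypotheses of those theorems have been checked on $(\States, \varrho, \Energy)$. The previous lemma already supplies the crucial ingredient: the Moreau--Yosida functional $(r,u)\mapsto\Energy(r,u)+\frac{1}{2h}\varrho^2((s,w),(r,u))$ is $(\frac{1}{h}+\lambda)$-convex along the geodesics described by \eqref{eq:ball_geodesic}--\eqref{eq:Wasserstein_geodesic}, with $\lambda=0$ if $n\ge 3$ and $\lambda<0$ (depending on the sublevel) if $n=2$. Combined with what was established in Section \ref{s:GMM} (weak sequential compactness and completeness of sublevels of $\Energy$, coincidence of weak and metric topology on sublevels, lower semicontinuity of $\Energy$ and of $\varrho$), the triple $(\States,\varrho,\Energy)$ is in the abstract setting of \cite[\S2.1,\S4]{Ambrosio_Gigli_Savare}.

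First I would verify the density condition needed to drop the finiteness assumption on the initial datum. Given $(r_0,u_0)\in\States$, let $u_\eps$ be the convolution of $u_0$ with a smooth radial mollifier of scale $\eps$, followed by a truncation that restores the support condition inside $B_{r_0+\eps}$ and the unit mass condition; set $r_\eps:=r_0+\eps$. Then $(r_\eps,u_\eps)\in\States$, $(r_\eps,u_\eps)\weaklyto(r_0,u_0)$ and $\varrho((r_\eps,u_\eps),(r_0,u_0))\to 0$, while $u_\eps$ is bounded, so $\Entropy(u_\eps)<+\infty$ and $\Perimeter(r_\eps)<+\infty$, hence $\Energy(r_\eps,u_\eps)<+\infty$. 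Thus $\{\Energy<+\infty\}$ is dense in $\States$, which is precisely the density hypothesis of \cite[Theorem 4.0.4(i)]{Ambrosio_Gigli_Savare}.

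Next I would simply quote the conclusions: \cite[Theorem 4.0.4]{Ambrosio_Gigli_Savare} delivers existence and uniqueness of the minimizing movement from any $(r_0,u_0)\in\States$, its coincidence with the generalized minimizing movement constructed in Theorem \ref{th:GMM_existence}, the evolution variational inequality, and the contraction estimate $\varrho((r(t),u(t)),(s(t),w(t)))\le e^{-\lambda t}\varrho((r_0,u_0),(s_0,w_0))$. Local Lipschitz regularity of the curve, finiteness of the metric slope $|\partial\Energy|$ for $t>0$, and the identity \eqref{eq:strong_maximal_slope} in terms of right metric derivative, slope and energy derivative are furnished by \cite[Theorem 2.4.15]{Ambrosio_Gigli_Savare}, together with the extension to $t=0$ under the additional hypothesis $|\partial\Energy|(r_0,u_0)<+\infty$.

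The main obstacle I anticipate is the density step: one must build the approximating sequence $(r_\eps,u_\eps)$ carefully so that it lies in $\States$ (support inside $B_{r_\eps}$, unit mass, radial symmetry) and simultaneously has finite entropy, since bare mollification typically enlarges the support and disturbs normalization. A safe construction is to first extend $u_0$ by zero outside $B_{r_0}$, mollify with a radial kernel of scale $\eps$, keep the result inside $B_{r_0+\eps}$ automatically, and renormalize by division by a factor $1+o(1)$; bounded density then forces $\Entropy(u_\eps)<+\infty$ and $\varrho$-convergence to $(r_0,u_0)$ follows from the explicit Euclidean formula for $\setdist$ and from $W_2$-continuity of mollification plus the triangle inequality. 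Once this is in place, everything else is a citation.
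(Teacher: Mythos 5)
Your proposal is correct and follows the paper's route exactly: verify the hypotheses of \cite[Theorems 2.4.15 and 4.0.4]{Ambrosio_Gigli_Savare} (convexity along geodesics from the preceding lemma, compactness and lower semicontinuity from Section \ref{s:GMM}) and cite those theorems for all the conclusions. The only thing you add over the paper is an explicit mollification argument for density of the finite-energy set, which the paper merely asserts; your construction is fine, though with a radial mollifier supported in $B_\eps$ the support is automatically inside $B_{r_0+\eps}$ and the mass is preserved, so the truncation and renormalization you flag as the main obstacle are in fact unnecessary.
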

\begin{rem}
	The definitions of \emph{curve of maximal slope} and $|\partial\Energy|$ will be presented below. It will turn out that \eqref{eq:strong_maximal_slope} is stronger that the statement that $(r(t), u(t))$ is a curve of maximal slope
\end{rem}

Unfortunately, as $\lambda\le 0$, the contraction property does not give a lot of information about the asymptotic behaviour of the minimizing movement.

\section{Curves of maximal slope}
\label{s:MaxSlope}

One of the conclusions of Theorem \ref{th:convexity_results} is that for any $(r_0, u_0)\in\States$, a curve of maximal slope for $\Energy$ starting from $(r_0, u_0)$ exists. In this section, the concepts of \emph{local slope}, denoted by $|\partial\Energy|$ and \emph{curve of maximal slope} will be introduced. Additionally, the local slope of $\Energy$, which may be infinite, will be computed in this section.

A curve of maximal slope can be regarded as being the metric equivalent of a gradient flow. As before, the inspiration is the situation in $\Reals^n$. The equation $\dot{x}(t)=-\nabla\phi(x(t))$ flow for the gradient flow of some smooth $\phi:\Reals^n\to\Reals$ is equivalent to
\begin{equation}
	\diff{t}\phi(x(t))= -\frac{1}{2}|\nabla\phi(x(t))|-\frac{1}{2}|\dot{x}(t)|.
\end{equation}
Using Young's inequality, 
\begin{equation}
	\diff{t}\phi(y(t))
	\ge -|\nabla\phi(y(t))||\dot{y}(t)|
	\ge -\frac{1}{2}|\nabla\phi(y(t))|^2-\frac{1}{2}|\dot{y}(t)|^2
\end{equation}
for \emph{any} smooth curve $y$. Therefore, a gradient flow in $\Reals^n$ can be characterized by the inequality
\begin{equation}\label{eq:max_slope}
	\diff{t}\phi(x(t))\le -\frac{1}{2}|\nabla\phi(x(t))|^2-\frac{1}{2}|\dot{x}(t)|^2.
\end{equation}

If it is possible to redefine $|\nabla\phi|$ and $|\dot{x}|$ in a metric space such that the chain rule for $\phi\circ x$ still holds, this inequality can be used to define a gradient flow in a metric space. A generalization for $|\dot{x}|$ has already been found: it is the metric derivative defined in the previous section. A possible generalization for $|\nabla\phi|$ is the \emph{local slope}. The local slope of a functional $\phi:\mathcal{S}\to\Reals$ is defined by
\begin{equation}\label{eq:def_local_slope}
	|\partial\phi|(v)=\limsup_{w\to v}\frac{(\phi(v)-\phi(w))^+}{d(v, w)}.
\end{equation}
Note that the local slope is in some sense one-sided: it only measures how fast the value of $\phi$ decreases near a point. This makes sense, since the functionals that are considered are usually only lower semicontinuous. See \cite[\S1]{Ambrosio_Gigli_Savare} for more details.

The local slope of $\Energy$ can be computed using methods from the proof of \cite[Theorem 10.4.6]{Ambrosio_Gigli_Savare}. The main ingredient is \cite[Lemma 10.4.4]{Ambrosio_Gigli_Savare}, which describes the behavior of $\Entropy(u)$ as $u$ is pushed forward by a sufficiently smooth map $\mathbf{r}$.

\begin{lem}\label{th:push_forward_entropy}
	Let $u\in\Profiles$, $\mathbf{r}\in L^2(\Reals^n, u)$ be a radial vector field, $\overline{\tau}>0$ and suppose that
	\begin{enumerate}
	\item	$\mathbf{r}$ is differentiable $u$-almost everywhere, and $\mathbf{r}_t:=(1-\tau)\mathbf{r}+\tau\id$ is $u\Lebesgue^n$-injective with $|\det\nabla r_\tau|>0$ $u\Lebesgue^n$-almost everywhere for any $\tau\in[0, \overline{\tau}]$,
	\item $\|\nabla(\mathbf{r}-\id)\|_{L^\infty(\Reals^n, u)}<+\infty$
	\item There exists a constant $C>0$ such that $\forall z_1, z_2:\integrand(z_1+z_2)\le C\left(1+\integrand(z_1)+\integrand(z_2)\right)$ \label{as:doubling}
	\item	$\Entropy((\mathbf{r}_\tau)_\#u)<+\infty$.
	\end{enumerate}
	Then the map $\tau\mapsto\tau^{-1}(\Entropy((\mathbf{r}_\tau)_\#u)-\Entropy(u))$ is nondecreasing in $[0, \overline{\tau}]$, and
	\begin{equation}\label{eq:push_forward_entropy}
		+\infty>\lim_{\tau\downto 0}\frac{\Entropy((\mathbf{r}_\tau)_\#u)-\Entropy(u)}{\tau}=-\int_{\Reals^n}\hat\integrand(u(x))\tr\nabla(\mathbf{r}(x)-x)\ud{x}
	\end{equation}
\end{lem}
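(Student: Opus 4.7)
The plan is to reduce to \cite[Lemma 10.4.4]{Ambrosio_Gigli_Savare}, of which the present statement is the radially symmetric specialization. Since $\mathbf{r}$ is a radial vector field, $\mathbf{r}_\tau=(1-\tau)\id+\tau\mathbf{r}$ maps radial data to radial data, so $(\mathbf{r}_\tau)_\#u\in\Profiles$ whenever $u\in\Profiles$, and the four hypotheses translate directly into the assumptions of the cited result. For completeness, and because the structure of the computation will be needed when studying the local slope of $\Energy$ in what follows, I would reproduce the key steps below.

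By hypothesis (1), $\mathbf{r}_\tau$ is injective $u\Lebesgue^n$-almost everywhere with $J_\tau:=|\det\nabla\mathbf{r}_\tau|>0$, so the push-forward admits the density $u(x)/J_\tau(x)$ and the change of variables formula yields
\begin{equation}\label{eq:plan_pushforward_entropy}
\Entropy((\mathbf{r}_\tau)_\#u)=\int_{\Reals^n}\integrand\!\left(\frac{u(x)}{J_\tau(x)}\right)J_\tau(x)\ud{x}.
\end{equation}
Monotonicity of $\tau\mapsto\tau^{-1}(\Entropy((\mathbf{r}_\tau)_\#u)-\Entropy(u))$ is equivalent to convexity of $\tau\mapsto\Entropy((\mathbf{r}_\tau)_\#u)$, which I would establish pointwise inside the integral. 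For fixed $x$, the function $h(J):=Jf(u(x)/J)$ satisfies $h'(J)=-\hat\integrand(u(x)/J)$, and since $\hat\integrand$ is nondecreasing, $h$ is convex and nonincreasing. Combined with concavity of $\tau\mapsto J_\tau(x)^{1/n}$, which follows from Brunn--Minkowski applied to $\nabla\mathbf{r}_\tau=(1-\tau)I+\tau\nabla\mathbf{r}$, and with the McCann condition that $z\mapsto z^n\integrand(z^{-n})$ is convex nonincreasing, already verified in Section \ref{s:convexity}, one obtains pointwise convexity in $\tau$ of the integrand in \eqref{eq:plan_pushforward_entropy}, and hence the claimed monotonicity.

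The limit is then evaluated by differentiating the integrand at $\tau=0$. Using $J_0\equiv 1$, $\partial_\tau J_\tau|_{\tau=0}=\tr\nabla(\mathbf{r}-\id)$, and $h'(1)=-\hat\integrand(u)$, the pointwise $\tau$-derivative equals $-\hat\integrand(u(x))\tr\nabla(\mathbf{r}(x)-x)$, matching the right-hand side of \eqref{eq:push_forward_entropy}. I expect the main obstacle to be passing this pointwise identity to the integral, i.e. producing an integrable dominant for the family of difference quotients as $\tau\downto 0$. This is precisely where hypotheses (2), (3) and (4) enter: the $L^\infty$ bound on $\nabla(\mathbf{r}-\id)$ gives two-sided bounds on $J_\tau$ uniform in $\tau\in[0,\overline\tau]$; the doubling condition then controls $\integrand(u/J_\tau)$ in terms of $\integrand(u)$ up to an additive constant; and the finiteness of $\Entropy((\mathbf{r}_\tau)_\#u)$ supplies the integrable dominant needed to justify the dominated convergence argument, giving both the identity and the finiteness of the limit.
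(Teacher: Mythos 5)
The paper gives no proof of this lemma; it is stated as the radially symmetric specialization of \cite[Lemma 10.4.4]{Ambrosio_Gigli_Savare}, followed only by a discussion of which integrands $\integrand$ and which vector fields $\mathbf{r}$ satisfy the hypotheses. Your approach---simply invoking that lemma---therefore matches the paper exactly, and your supplementary sketch is a faithful reconstruction of the AGS argument; in particular, you have (correctly, and silently) fixed a typo in the paper's statement, since for the limit formula at $\tau=0$ to make sense one must have $\mathbf{r}_\tau:=(1-\tau)\id+\tau\mathbf{r}$ rather than the paper's $(1-\tau)\mathbf{r}+\tau\id$.

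Two small inaccuracies in the sketch, neither fatal. First, ``monotonicity \dots\ is equivalent to convexity'' should read ``is implied by convexity'': monotonicity of $\tau\mapsto\tau^{-1}(g(\tau)-g(0))$ only says $g$ is star-shaped from $(0,g(0))$, which is strictly weaker. Second, the convexity argument conflates two distinct facts. Convexity and monotonicity of $h(J):=Jf(u(x)/J)$ follow for \emph{every} convex $\integrand$ (indeed $h''(J)=(u/J^2)\hat{\integrand}'(u/J)=(u^2/J^3)\,\integrand''(u/J)\ge 0$), but this alone does not give convexity of $\tau\mapsto h(J_\tau(x))$, because $\tau\mapsto J_\tau(x)$ is not concave---only $\tau\mapsto J_\tau(x)^{1/n}$ is, by the AM--GM/Brunn--Minkowski argument you mention. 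The monotonicity therefore rests on the genuinely stronger McCann condition on $z\mapsto z^n\integrand(z^{-n})$ composed with the concave $J_\tau^{1/n}$; the $h$-convexity observation is useful only for reading off $h'(1)=-\hat{\integrand}(u)$ in the derivative computation. You do cite both ingredients, but the phrasing makes $h$-convexity look like part of the monotonicity proof when it cannot play that role.
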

It is easy to check that $z\mapsto z\log z$ satisfies this satisfies Condition \eqref{as:doubling}, also known as the 'doubling' condition. Moreover, it does not limit the growth of $\integrand$ too much. For instance, the functional $z\mapsto z^2$ also satisfies Condition \eqref{as:doubling}. The conditions on $\mathbf{r}$ are a bit technical, but there are two important examples of vector fields that satisfy the requirements: smooth, injective vector fields and optimal transport maps between profiles with copmact support. The latter follows from \cite[Theorem 6.2.7]{Ambrosio_Gigli_Savare} about regularity of optimal transport maps, and $u\Lebesgue^n$-essential injectivity follows as in the beginning of the proof of \cite[Proposition 9.3.9]{Ambrosio_Gigli_Savare}. Note that the apprximate differential in \cite[Theorem 6.2.7]{Ambrosio_Gigli_Savare} is not needed if $\mathbf{r}$ is a transport map between compactly supported probability measures.

Before the main theorem can be proven, a refined version of \cite[Lemma 10.4.5]{Ambrosio_Gigli_Savare} is needed. This lemma shows that, if $\hat\integrand$ is smooth enough, a weak integration by parts formula holds.

\begin{lem}\label{th:slope_ibp}
	Let $(r, u)\in\States$, and $\boldsymbol{r}$ and $\overline{\tau}$ be as in Lemma \ref{th:push_forward_entropy}. In addition, suppose that
	\begin{enumerate}
	\item	$\hat\integrand(u)|_{B_r}\in W^{1, 1}(B_r),$
	\item $\mathbf{r}$ is of bounded variation.
	\end{enumerate}
	Then
	\begin{equation}
		\int_{B_r} \hat\integrand(u(x))\Div(\boldsymbol{r}(x)-x)\ud{x}
			\le -\int_{B_r} \nabla\hat\integrand(u(x))\cdot(\boldsymbol{r}(x)-x)\ud{x}
				{}+(s-r)\hat\integrand(u(r))\Perimeter(r)
	\end{equation}
\end{lem}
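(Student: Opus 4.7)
The plan is to use a smooth approximation together with the classical divergence theorem on $B_r$, passing to the limit with the help of the $W^{1,1}$ trace of $\hat\integrand(u)$ and the inner BV trace of $\mathbf{r}-\id$, and explaining the inequality via a non-negative singular contribution to $\Div(\mathbf{r}-\id)$.

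First I would set up the traces on $\partial B_r$. Since $\hat\integrand(u)|_{B_r}\in W^{1,1}(B_r)$, it admits a well-defined trace on $\partial B_r$; by radial symmetry, this trace is the constant $\hat\integrand(u(r))$, with $u(r)$ the radial trace of $u$ from the left. The BV radial vector field $\mathbf{r}-\id$ admits an inner normal trace on $\partial B_r$, whose value is the scalar $s-r$, where $s$ denotes the inner radial trace of $\mathbf{r}$ on $\partial B_r$. I would then mollify $\mathbf{r}-\id$ with a smooth radial kernel $\eta_\eps$, obtaining smooth radial vector fields $\mathbf{w}_\eps := \eta_\eps*(\mathbf{r}-\id)$ converging to $\mathbf{r}-\id$ in $L^1(B_r)$, with inner trace on $\partial B_r$ converging to the inner trace of $\mathbf{r}-\id$.

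Next, I would apply the classical Gauss--Green formula on $B_r$:
\[
\int_{B_r}\hat\integrand(u)\Div\mathbf{w}_\eps\,\ud{x}
	= -\int_{B_r}\nabla\hat\integrand(u)\cdot\mathbf{w}_\eps\,\ud{x}
	+ \int_{\partial B_r}\hat\integrand(u)\mathbf{w}_\eps\cdot\normal\,\ud{\Hausdorff^{n-1}}.
\]
Letting $\eps\downto 0$, the first right-hand term converges since $\nabla\hat\integrand(u)\in L^1(B_r)$ and $\mathbf{w}_\eps\to\mathbf{r}-\id$ in $L^1$; the boundary term converges to $(s-r)\hat\integrand(u(r))\Perimeter(r)$ by the trace convergence and radial symmetry.

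The left-hand side is the delicate piece. The sequence $\Div\mathbf{w}_\eps$ converges weak-$\ast$ as Radon measures on $B_r$ to $\nu:=\Div(\mathbf{r}-\id)$, so the limit equals $\int_{B_r}\hat\integrand(u)\,\ud\nu$, understood using the precise representative of the continuous $\hat\integrand(u)$. Decomposing $\nu = g\Lebesgue^n + \nu_s$ with $g$ the density appearing on the left-hand side of the claim, this limit becomes $\int_{B_r}\hat\integrand(u)\Div(\mathbf{r}-\id)\,\ud{x}+\int_{B_r}\hat\integrand(u)\,\ud{\nu_s}$. The injectivity and positive-Jacobian hypotheses of Lemma \ref{th:push_forward_entropy}, combined with radial symmetry, force the radial component of $\mathbf{r}$ to be non-decreasing where relevant, so the singular part $\nu_s$ is non-negative on the support of $u$; since $\hat\integrand\ge 0$, the extra term $\int\hat\integrand(u)\,\ud{\nu_s}$ is non-negative. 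Moving it to the other side produces exactly the inequality claimed.

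The main obstacle is twofold: justifying that the limit of $\int\hat\integrand(u)\Div\mathbf{w}_\eps\,\ud{x}$ identifies correctly with $\int\hat\integrand(u)\,\ud\nu$ against the full Radon measure (requiring enough regularity of $\hat\integrand(u)$ to pair with the singular part), and extracting from the radial-symmetric, injective, positive-Jacobian structure of $\mathbf{r}$ the monotonicity needed to conclude $\nu_s\ge 0$ on $\operatorname{supp}u$. The boundary trace conversion is straightforward once radial symmetry is exploited, so the heart of the argument is really the sign of this singular defect.
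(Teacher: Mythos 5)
Your plan is conceptually the same as the paper's: integrate by parts on $B_r$, keep track of the boundary trace, and discard a nonnegative singular contribution coming from the BV divergence. The route you take to implement it, however, is dual to the paper's. The paper invokes \cite[Proposition 3.92]{Ambrosio_Fusco_Pallara} as a black box, which gives the one-sided Gauss--Green inequality for a BV field $\boldsymbol{\eta}$ with $\Div\boldsymbol{\eta}\ge 0$ tested against $W^{1,1}$ functions vanishing outside $B_r$; it applies this directly to $\boldsymbol{\eta}=\boldsymbol{r}$, then subtracts the exact integration-by-parts identity for the smooth field $\id$. You instead mollify the vector field $\boldsymbol{r}-\id$ as a whole and pass to the limit, re-deriving the AFP estimate from scratch. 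This trades one approximation for another: the cited AFP proposition effectively approximates the test function $w$ by smooth functions, while you approximate the vector field. Your version correctly identifies the heart of the matter --- the singular part of $\Div(\boldsymbol{r}-\id)$ is nonnegative because the radial component of $\boldsymbol{r}$ is nondecreasing, which the paper extracts from monotonicity of the 1D optimal transport map via \eqref{eq:rearrangement} rather than from the injectivity/positive-Jacobian hypotheses.

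Two technical points where your argument is thinner than the cited result handles. First, you pair $\hat\integrand(u)$ against the weak-$*$ limit of $\Div\mathbf{w}_\eps$; weak-$*$ convergence of measures only gives convergence against $C_0$ test functions, and $\hat\integrand(u)\in W^{1,1}(B_r)$ is radial (hence continuous on $(0,r]$) but not a priori bounded near the origin --- justifying $\int\hat\integrand(u)\Div\mathbf{w}_\eps\,\ud{x}\to\int\hat\integrand(u)\,\ud\nu$ therefore needs a separate argument (e.g.~a cutoff near $0$). Second, mollification of a BV field by a standard kernel does not automatically give convergence of inner boundary traces on $\partial B_r$; one needs either a one-sided (interior) mollifier or the radial structure exploited explicitly. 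Both of these are precisely the work that the paper avoids doing by citing AFP. Your proof is recoverable, but as stated these two limit interchanges are gaps you flag but do not close.
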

\begin{proof}
	Without loss of generality, it can be assumed that $\overline{\tau}=1$. As in the proof of \cite[Lemma 10.4.5]{Ambrosio_Gigli_Savare}, the main ingredient is an estimate for the distributional divergence of a BV vector field $\boldsymbol{\eta}$. However, now using \cite[3.92]{Ambrosio_Fusco_Pallara}, the boundary term at $\bdry B_r$ should also be taken into account. If $w\in C^\infty_c(\Reals^n)$ is nonnegative and radially symmetric, its trace on $\bdry B_r$ is constant, say $w(x)=\tilde{w}$ if $|x|=r$. Then
	\begin{equation}\label{eq:ibp_inequality_basic}
		\int_{B_r} w(x)\Div(\boldsymbol{\eta}(x))\ud{x}
			\le -\int_{B_r}\nabla w(x)\cdot\boldsymbol{\eta}(x)\ud{x}+\boldsymbol{\eta}(r)\tilde{w}\Perimeter(r)
	\end{equation}
	if the distributional divergence of $\boldsymbol{\eta}$ is a nonnegative measure. For bounded $\boldsymbol{\eta}$, the same inequality holds for nonnegative and radially symmetric $w$ such that $w|_{B_r}\in W^{1, 1}(B_r)$ and $w\equiv 0$ outside $B_r$. Applying this for $w=\hat\integrand(u)$, $\boldsymbol{\eta}:=\boldsymbol{r}$,
	\begin{equation}\begin{split}\label{eq:vector_field_ibp}
		\int_{B_r} \hat\integrand(u(x))\Div(\boldsymbol{r}(x))\ud{x}
		&	\le -\int_{B_r}\nabla\hat\integrand(u(x))\cdot\boldsymbol{r}(x)\ud{x}
						{}+\boldsymbol{r}(r)\hat\integrand(u(r))\Perimeter(r) \\
		&	\le -\int_{B_r}\nabla\hat\integrand(u(x))\cdot\boldsymbol{r}(x)\ud{x}+s\hat\integrand(u(r))\Perimeter(r).
	\end{split}\end{equation}
	Standard integration by parts yields
	\begin{equation}
		\int_{B_r} \hat\integrand(u(x))\Div(x)\ud{x}
			= -\int_{B_r}\nabla\hat\integrand(u(x))\cdot x\ud{x}+r\hat\integrand(u(r))\Perimeter(r),
	\end{equation}
	which, together with \eqref{eq:vector_field_ibp}, yields the result.
\end{proof}

Comparing this result to \cite[Lemma 10.4.5]{Ambrosio_Gigli_Savare}, the approximating sequence $\mathbf{r}_k$ is left out. This can be done because the lemma will be applied to an optimal transport map between radially symmetric measures. From \eqref{eq:rearrangement}, it can be seen that $t_u^w$ is always monotone. Hence, the map $\mathbf{t}_u^w(x):=t_u^w(x)\frac{x}{|x|}$, for which the lemma will be applied, is of bounded variation.

Apart from radial symmetry, the only difference between this proof and the proof of \cite[Lemma 10.4.5]{Ambrosio_Gigli_Savare} is from the boundary term in \eqref{eq:ibp_inequality_basic}. In the following computations, more boundary terms will appear in a similar way. Loosely speaking, $W^{1, 1}(\Reals^n)$ functions in \cite[\S10.4]{Ambrosio_Gigli_Savare} are replaced by functions that are $W^{1, 1}$ when restricted to $B_r$. In the language of $\BV$ functions, this means that the distributional derivative consists of an absolutely continuous part and a jump on $\bdry B_r$. Obviously, it is not important that the domain is a ball when performing these calculations: the same calculations can be done on much more general domains.

Since the calculation is basically integration by parts, it is not surprising that boundary terms appear when restricting to a bounded domain. Through the calculations that follow, however, the boundary term will give the osmotic term that was discussed in the introduction. This will become clear in the proof of the following theorem that gives the local slope $|\partial\Energy|$ of $\Energy$.

\begin{thm}\label{th:local_slope}
	Let $(r, u)\in\States$ be given. Then $\Energy$ has finite local slope at $(r, u)$ if and only if $\hat\integrand(u)\in W^{1, 1}((0, r), \Perimeter)$ with 
	\begin{equation}
		\frac{\nabla\hat\integrand(u)}{u}\in L^2(\Reals^n, u\Perimeter).
	\end{equation}
	In this case,
	\begin{equation}\label{eq:local_slope_expression}
		|\partial\Energy|(r, u)
			=\left(\left|\frac{n-1}{r}-\hat\integrand(u(r))\right|^2\Perimeter(r)
				{}+\diffusion\left\|\frac{\nabla\hat\integrand(u)}{u}\right\|^2_{L^2(\Reals^n, u)}\right)^\frac{1}{2}.
	\end{equation}
\end{thm}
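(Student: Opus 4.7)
My plan is to adapt \cite[Theorem 10.4.6]{Ambrosio_Gigli_Savare} to the product space $\States$. The two new features relative to the classical Wasserstein setting are the scalar radius $r$ contributing the perimeter term and, more importantly, the boundary contribution at $\bdry B_r$ arising from the confinement of mass to $B_r$. These two features are mirrored by the two summands inside the square root in \eqref{eq:local_slope_expression}, and I would establish the matching inequalities separately.

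For the lower bound, I would plug one-parameter perturbations into the definition \eqref{eq:def_local_slope}. Given a smooth compactly supported radial vector field $\boldsymbol{\xi}(x) = \xi(|x|)x/|x|$ and a real number $a$ with $a \ge \xi(r)$ (the compatibility condition ensuring $(s_\tau, w_\tau) \in \States$), set
\begin{equation*}
(s_\tau, w_\tau) := \bigl(r + \tau a,\; (\id + \tau\boldsymbol{\xi})_\# u\bigr).
\end{equation*}
First-order expansions at $\tau = 0$ come from Lemma \ref{th:push_forward_entropy} (for $\Entropy$), direct differentiation of $\Perimeter$ (using $\Perimeter'(r) = \frac{n-1}{r}\Perimeter(r)$), and Lemma \ref{th:states_abs_cont} (for $\varrho$). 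Choosing the saturating value $a = \xi(r)$ turns the inequality of Lemma \ref{th:slope_ibp} into an equality and yields
\begin{equation*}
\frac{d}{d\tau}\bigg|_{\tau=0^+}\Energy(s_\tau, w_\tau) = a\left[\frac{n-1}{r} - \hat\integrand(u(r))\right]\Perimeter(r) + \int_{\Reals^n}\nabla\hat\integrand(u)\cdot\boldsymbol{\xi}\,\ud{x},
\end{equation*}
with metric speed $\sqrt{a^2\Perimeter(r) + \|\boldsymbol{\xi}\|^2_{L^2(\Reals^n,u)}/\diffusion}$. Reversing the sign of $(a,\boldsymbol{\xi})$ so that $\Energy$ decreases and optimising by Cauchy--Schwarz with respect to the direct-sum structure $\Reals \oplus L^2(\Reals^n,u)$ (with weights $\Perimeter(r)$ and $1/\diffusion$) recovers exactly the right-hand side of \eqref{eq:local_slope_expression}. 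A subfamily of test perturbations with $\boldsymbol{\xi}$ supported away from $\bdry B_r$ forces $\nabla\hat\integrand(u)/u \in L^2(\Reals^n,u\Perimeter)$, and hence $\hat\integrand(u) \in W^{1,1}((0,r),\Perimeter)$, whenever $|\partial\Energy|(r,u)$ is finite.

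For the upper bound I would use the $\lambda$-convexity of $\Energy$ proven in Section \ref{s:convexity}. Given $(s,w) \in \States$ with $\Energy(s,w) < +\infty$, let $(r_\tau, u_\tau)$ be the constant-speed geodesic from $(r,u)$ to $(s,w)$ described by Lemma \ref{th:states_geodesics} together with \eqref{eq:ball_geodesic} and \eqref{eq:Wasserstein_geodesic}. $\lambda$-convexity yields
\begin{equation*}
\frac{\Energy(r,u) - \Energy(r_\tau, u_\tau)}{\tau} \le \Energy(r,u) - \Energy(s,w) + \frac{|\lambda|}{2}\varrho^2\bigl((r,u),(s,w)\bigr).
\end{equation*}
Passing to the limit $\tau \downto 0$ via Lemma \ref{th:push_forward_entropy} applied to $\boldsymbol{r} = \mathbf{t}_u^w$ (which is BV by the monotonicity remark after Lemma \ref{th:slope_ibp}, so Lemma \ref{th:slope_ibp} applies) converts the left-hand side into the same linear functional appearing in the lower bound, now evaluated on the displacement $(s-r,\; \mathbf{t}_u^w - \id)$. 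Dividing by $\varrho$ and letting $(s,w) \to (r,u)$ kills the $\lambda$-error, and the identical Cauchy--Schwarz argument supplies the matching upper bound.

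The main obstacle is the careful bookkeeping of the boundary term $\hat\integrand(u(r))\Perimeter(r)$, which is the osmotic contribution forecast in the introduction: Lemma \ref{th:slope_ibp} is strict whenever the pushforward under-reaches the new boundary, and only the saturating direction $a = \xi(r)$ (respectively $s - r = \mathbf{t}_u^w(r) - r$ in the upper bound) extracts the sharp value. A secondary technical point is verifying that the pointwise trace $\hat\integrand(u(r))$ used in \eqref{eq:local_slope_expression} is well defined; this is precisely why the regularity $\hat\integrand(u) \in W^{1,1}((0,r),\Perimeter)$ must be obtained as a \emph{necessary}, not merely sufficient, condition for finite slope, proved by running the lower-bound duality against a suitable family of boundary-free test vector fields.
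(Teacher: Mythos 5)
Your proposal captures the paper's overall architecture for both implications: the forward direction (finite slope $\Rightarrow$ regularity and the lower bound) is run by testing against smooth radial vector fields exactly as the paper does, and the converse ($\Rightarrow$ the upper bound) uses convexity of $\Energy$ along interpolants and Cauchy--Schwarz in the weighted product $\Reals\oplus L^2(\Reals^n,u)$, which is also the paper's structure. The specific route for the converse, however, differs: you invoke the abstract $\lambda$-convexity of $\Energy$ along $\States$-geodesics from Section \ref{s:convexity}, whereas the paper avoids the $\lambda$-convexity machinery entirely and uses the more primitive facts directly — the monotone difference quotient of $\Entropy$ from Lemma \ref{th:push_forward_entropy} and ordinary convexity of $r\mapsto\Perimeter(r)$ along the \emph{linear} interpolant $r+\tau(s-r)$. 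The paper's choice is deliberate: the linear interpolant makes the perimeter derivative equal to $\Perimeter'(r)(s-r)$ on the nose, so the claimed ``displacement $(s-r,\mathbf{t}_u^w-\id)$'' structure holds exactly, whereas along the $\setdist$-geodesic \eqref{eq:ball_geodesic} the radius moves with ordinary speed $\setdist(r,s)/\sqrt{\Perimeter(r)}$, which is only asymptotically $|s-r|$. Your argument still closes because the ratio with $\varrho$ is what survives the $\limsup$, but the bookkeeping is genuinely heavier than you indicate, and the displayed $\lambda$-convexity inequality has the wrong sense (for $\lambda<0$ the chord-slope bound is $\Energy(r,u)-\Energy(s,w)\le\tau^{-1}(\Energy(r,u)-\Energy(\gamma(\tau)))+\tfrac{|\lambda|}{2}(1-\tau)\varrho^2$, not what you wrote).

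Two further inaccuracies are worth flagging. First, the assertion that ``choosing $a=\xi(r)$ turns the inequality of Lemma \ref{th:slope_ibp} into an equality'' is not how the lower bound proceeds: Lemma \ref{th:slope_ibp} is used only in the converse direction, while the equality \eqref{eq:BV_derivative_flux} needed for the lower bound is a consequence of the $W^{1,1}(B_r)$ structure of $\hat\integrand(u)$, not of saturating Lemma \ref{th:slope_ibp} (which also contains a second source of inequality coming from the potential singular part of $\Div\boldsymbol{r}$). Second, your sketch uses the first-order expansion $\int\nabla\hat\integrand(u)\cdot\boldsymbol{\xi}$ before establishing that $\hat\integrand(u)|_{B_r}\in W^{1,1}$; you do note at the end that regularity follows from testing with $\boldsymbol{\xi}$ vanishing at $\bdry B_r$, but logically this step must come first — the paper builds it in stages (integrability via $\boldsymbol{\xi}=-\id$, then BV via Riesz, then $W^{1,1}(B_r)$ via $\xi(r)=0$) precisely so that the integration by parts used afterwards is licensed. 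The broad plan is sound, but these order-of-quantification and sign issues are exactly where the proof's real content lives.
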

\begin{proof}
	Suppose first that $|\partial\Energy|(r, u)$ is finite. Let $\boldsymbol{\xi}$ be a radial vector field, and define $\xi$ by $\boldsymbol{\xi}(x)=\xi(|x|)\frac{x}{|x|}$. Similar to the proof of \cite[10.4.6]{Ambrosio_Gigli_Savare}, note that
	\begin{equation}
		W_2\left(u, (X_\tau)_\#u\right)
			\le\tau\|\boldsymbol{\xi}\|_{L^2(\Reals^n, u)}
			=\tau\|\xi\|_{L^2((0, \infty), u\Perimeter)}
	\end{equation}
	where $X_\tau(x)=x+\tau\boldsymbol{\xi}(x)$. Using the definitions of $|\partial\Energy|$ and metric derivative, followed by this inequality and \eqref{eq:set_dist_derivative}, 
	\begin{equation}\label{eq:EL_inequality}\begin{split}
		\left|\int_{\Reals^n}\hat\integrand(u(x))\Div\boldsymbol{\xi}(x)\ud{x}\right|
		&	\le\lim_{\tau\downto 0}\frac{|\Energy(X_\tau(r), (X_\tau)_\#u)-\Energy(r, u)|}{\tau}
					{}+\frac{|\Perimeter(X_\tau(r))-\Perimeter(r)|}{\tau} \\
		&	\le|\partial\Energy|(r, u)\left|\left(X_\tau(r), (X_\tau)_\#u\right)\right|'(0)
					+|\xi(r)|\Perimeter'(r) \\
		&	\le|\partial\Energy|(r, u)
					\left(|\xi(r)|^2\Perimeter(r)
						{}+\frac{\|\xi\|^2_{L^2((0, \infty), u\Perimeter)}}{\diffusion}\right)^\frac{1}{2}
					{}+|\xi(r)|\Perimeter'(r)
	\end{split}\end{equation}
	whenever $\xi$ is smooth enough to apply Lemma \ref{th:push_forward_entropy} with $\boldsymbol{r}=\boldsymbol{\xi}+\id$. For $\boldsymbol{\xi}=-\xi$, this yields
	\begin{equation}
		n\int_{\Reals^n}\hat\integrand(u(x))\ud{x}
			\le|\partial\Energy|(r, u)\left(r^2\Perimeter(r)
					{}+\frac{\|\id\|^2_{L^2((0, \infty), u\Perimeter)}}{\diffusion}\right)^\frac{1}{2}+r\Perimeter'(r),
	\end{equation}
	which means that $\hat\integrand(u)$ is integrable. Next, apply \eqref{eq:EL_inequality} for smooth, compactly supported radial $\boldsymbol{\xi}$ to obtain
	\begin{equation}\begin{split}
		\left|\int_{\Reals^n}\hat\integrand(u(x))\Div\boldsymbol{\xi}(x)\ud{x}\right|
		&	\le|\partial\Energy|(r, u)
					\left(|\xi(r)|^2\Perimeter(r)
						{}+\frac{\|\xi\|^2_{L^2((0, \infty), u\Perimeter)}}{\diffusion}\right)^\frac{1}{2}
					{}+|\xi(r)|\Perimeter'(r) \\
		&	\le\left(|\partial\Energy|(r, u)\left(\frac{1}{\diffusion}+\Perimeter\right)^\frac{1}{2}+\Perimeter'(r)\right)
					\|\boldsymbol{\xi}\|_{C^0((0, \infty))}.
	\end{split}\end{equation}
	Noting that this inequality only needs to be checked for radial $\boldsymbol{\xi}$, Riesz' theorem \cite[1.54]{Ambrosio_Fusco_Pallara} implies that $x\mapsto\hat\integrand(u(x))$ is of bounded variation. Hence,
	\begin{equation}
		\int_{\Reals^n}\hat\integrand(u(x))\Div\boldsymbol{\xi}(x)\ud{x}
			=-\int_{\Reals^n}\boldsymbol{\xi}(x)\ud{D\hat\integrand(u(x))}.
	\end{equation}
	Restricting to $\xi$ with $\xi(r)=0$,
	\begin{equation}\label{eq:perturbation_boundary_fixed}
		\left|\int_{B_r}\boldsymbol{\xi}\ud{D\hat\integrand(u)}\right|
			=\left|\int_{B_r}\hat\integrand(u(x))\Div\boldsymbol{\xi}(x)\ud{x}\right|
			\le\frac{|\partial\Energy|(r, u)}{\diffusion}\|\boldsymbol{\xi}\|^2_{L^2(\Reals^n, u)}
	\end{equation}
	Using duality, this means that $D\hat\integrand(u)$ is absolutely continuous on $B_r$.	In particular, $\hat\integrand(u)|_{B_r}\in W^{1, 1}(B_r)$. Since $u\equiv 0$ outside $B_r$, it follows that the singular part of $D\hat\integrand(u)$ must be concentrated on $\bdry B_r$. By radial symmetry, it follows from \cite[Proposition 3.92]{Ambrosio_Fusco_Pallara} that $D\hat\integrand(u)$ only consist of an absolutely continuous part concentrated on $B_r$ and possibly a jump part accross $\bdry B_r$. More precisely, 
	\begin{equation}\begin{split}\label{eq:BV_derivative_flux}
		\int_{\Reals^n}\hat\integrand(u(x))\Div\boldsymbol{\xi}(x)r\ud{x}
		&	=-\int_{B_r}\nabla\hat\integrand(u(x))\cdot\boldsymbol{\xi}(x)\ud{x}
				{}+\int_{\bdry B_r}\hat\integrand(u(x))\boldsymbol{\xi}(x)\cdot\frac{x}{r}\ud{\Hausdorff^{n-1}(x)} \\
		&	=-\int_0^\infty\diff{\rho}\hat\integrand(u(\rho))\xi(\rho)\Perimeter(\rho)\ud{\rho}
				{}+\hat\integrand(u(r))\xi(r)\Perimeter(r).
	\end{split}\end{equation}
	
	Using Lemma \ref{th:push_forward_entropy}, and \eqref{eq:BV_derivative_flux}
	\begin{equation}\begin{split}
		\lim_{\tau\downto 0}\frac{\Energy(X_\tau(r), (X_\tau)_\#u)-\Energy(r, u)}{\tau}
		&	=\xi(r)\Perimeter'(r)-\int_{B_r}\hat\integrand(u(x))\Div\boldsymbol{\xi}(x)\ud{x} \\
		&	=\left(\Perimeter'(r)-\hat\integrand(u(r))\Perimeter(r)\right)\xi(r)
			+\int_{B_r}\nabla\hat\integrand(u(x))\cdot\boldsymbol{\xi}(x)\ud{x}.
	\end{split}\end{equation}
	Estimating the left hand side as in \eqref{eq:EL_inequality},
	\begin{multline*}
		\left|\left(\frac{\Perimeter'(r)}{\Perimeter(r)}-\hat\integrand(u(r))\right)\Perimeter(r)\xi(r)
			{}+\int_{B_r}\frac{\nabla\hat\integrand(u(x))}{u(x)}\cdot\boldsymbol{\xi}(x)u(x)\ud{x}\right| \\
		\le|\partial\Energy|(r, u)\left(|\xi(r)|^2\Perimeter(r)
			{}+\frac{\|\boldsymbol{\xi}\|^2_{L^2(\Reals^n, u)}}{\diffusion}\right)^\frac{1}{2}.
	\end{multline*}
	Finally, using another duality argument and evaluating $\frac{\Perimeter'(r)}{\Perimeter(r)}$,
	\begin{equation}
		\left(\left|\frac{n-1}{r}
									{}-\hat\integrand(u(r))\right|^2\Perimeter(r)
			{}+\diffusion\int_{B_r}\left|\frac{\nabla\hat\integrand(u(x))}{u(x)}\right|^2u(x)\ud{x}\right)^\frac{1}{2}
			\le|\partial\Energy|(r, u).
	\end{equation}
	\bigskip
	
	Conversely, assume $\hat\integrand(u)|_{B_r}\in W^{1, 1}(B_r)$, 
	\begin{equation}
		\frac{\nabla\hat\integrand(u)}{u}\in L^2(\Reals^n, u),
	\end{equation}
	and let $(s, w)\in\States$ such that $\Energy(s, w)\le\Energy(r, u)$. As noted above, the map $\boldsymbol{t}_u^w(x)=t_u^w(x)\frac{x}{|x|}$, which is the optimal transport map from $u$ to $w$ in $\Reals^n$, is sufficiently regular to apply Lemmata \ref{th:push_forward_entropy} and \ref{th:slope_ibp}. Then
	\begin{equation}\begin{split}
		\Energy(s, w)-\Energy(r, u)
		&	\ge(\Perimeter(s)-\Perimeter(r))
				{}-\int_{\Reals^n}\hat\integrand(u(x))\tr\nabla(\boldsymbol{t}_u^w(x)-x)\ud{x} \\
		&	\ge\left(\frac{\Perimeter'(r)}{\Perimeter(r)}-\hat\integrand(u(r))\right)\left(s-r\right)\Perimeter(r)
				{}+\int_{\Reals^n}\nabla\hat\integrand(u(x))\cdot(\boldsymbol{t}_u^w(x)-x)\ud{x} \\
		&	\ge -\left(\left|\frac{n-1}{r}-\hat\integrand(u(r))\right|^2\Perimeter(r)
				{}+\diffusion\left\|\frac{\nabla\hat\integrand(u)}{u}\right\|^2_{L^2(\Reals^n, u)}
				\right)^\frac{1}{2} \\
		&\hspace{1cm}		\left(|s-r|^2\Perimeter(r)+\frac{W_2^2(u, w)}{\diffusion}\right)^\frac{1}{2}
	\end{split}\end{equation}
	using convexity of $r\mapsto\Perimeter(r)$. The inequality in \eqref{eq:local_slope_expression} left to prove now follows by writing a Taylor expansion for the integral from \eqref{eq:def_set_dist}, and taking the limit superior for $(s, w)\to(r, u)$.
\end{proof}

Note again that boundary terms appear, most importantly in \eqref{eq:BV_derivative_flux}. As explained above, this is a direct consequence of restricting $u$ to a bounded domain. The fact that the boundary of the domain moves is not important here yet. In \eqref{eq:perturbation_boundary_fixed}, it becomes clear that the boundary term disappears when the perturbation is such that the mass at the boundary does not move. Studying the calculations a bit closer, the boundary term can be `estimated away' if $\xi(r)<0$. Hence, the boundary term still plays a role when the boundary is fixed, but disappears from the final outcome. From a modelling point of view this makes sense: if the cell membrane would be fixed for some reason, the osmotic force would still be there. The only thing that has changed is that te membrane does not react to forces anymore. In a more general setting, a fixed boundary can be regarded as a moving boundary with infinite resistance to force: it can be seen from the calculation that all forces are there, but disappear because the boundary is unable to react to them.

\bigskip

The section is concluded with a lemma that essentially characterizes the minimal Fr\'echet subdifferential of $\Energy$. 

\begin{lem}\label{th:frechet_derivative}
	Suppose that $(r(\tau), u(\tau))$ is an absolutely continuous curve in $\States$, and $v(\tau)$ is the solution of \eqref{eq:continuity_equation} with $\|v_\tau\|_{L^2((0, \infty), u(\tau)\Perimeter)}=|u'|(\tau)$ for almost every $\tau$. For any $\tau$ such that $|\partial\Energy|(r(\tau), u(\tau))<+\infty$, $\tau\mapsto\Energy(r(\tau), u(\tau))$ is differentiable, and \eqref{eq:optimal_map_velocity} holds,
	\begin{equation}
		\diff{\tau}\Energy(r(\tau), u(\tau))
			=\left(\frac{n-1}{r(\tau)}-\hat\integrand(u(r(\tau), \tau))\right)\Perimeter(r(\tau))r'(\tau)
				{}+\int_0^{r(\tau)}\pdiff[\hat\integrand(u(\rho, \tau))]{\rho}v_\tau(\rho)\Perimeter(\rho)\ud{\rho}.
	\end{equation}
\end{lem}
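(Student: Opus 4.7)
The plan is to split $\Energy(r(\tau), u(\tau)) = \Perimeter(r(\tau)) + \Entropy(u(\tau))$ and handle the two contributions separately, then combine a boundary integral generated by the entropy with the perimeter derivative. By Lemma \ref{th:states_abs_cont} the component curve $\tau\mapsto r(\tau)$ is absolutely continuous in $\Cells$, so $r'(\tau)$ exists almost everywhere; wherever it does, the chain rule yields
\begin{equation*}
	\diff{\tau}\Perimeter(r(\tau))=\Perimeter'(r(\tau))r'(\tau)=\frac{n-1}{r(\tau)}\Perimeter(r(\tau))r'(\tau),
\end{equation*}
since $\Perimeter(r)=n\omega_n r^{n-1}$. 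This accounts for the $\frac{n-1}{r(\tau)}\Perimeter(r(\tau))r'(\tau)$ piece in the target formula.

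For the entropy I would replay the mechanism from the proof of Theorem \ref{th:local_slope}. Writing $\mathbf{r}^{(h)}:=\mathbf{t}_{u(\tau)}^{u(\tau+h)}$ and applying Lemma \ref{th:push_forward_entropy} along the geodesic $\mathbf{r}^{(h)}_s:=(1-s)\id+s\mathbf{r}^{(h)}$, the monotonicity of its difference quotient gives
\begin{equation*}
	\Entropy(u(\tau+h))-\Entropy(u(\tau))\ge-\int_{\Reals^n}\hat\integrand(u(x,\tau))\Div(\mathbf{r}^{(h)}(x)-x)\ud{x},
\end{equation*}
and a symmetric argument, obtained by exchanging the roles of $u(\tau)$ and $u(\tau+h)$, supplies a matching upper bound. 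Since the hypothesis $|\partial\Energy|(r(\tau),u(\tau))<+\infty$ forces $\hat\integrand(u(\cdot,\tau))|_{B_{r(\tau)}}\in W^{1,1}(B_{r(\tau)})$ through Theorem \ref{th:local_slope}, the radial integration by parts of \eqref{eq:BV_derivative_flux}, equivalently Lemma \ref{th:slope_ibp}, rewrites both bounds in terms of the scalar displacement $\xi^{(h)}(\rho):=t_{u(\tau)}^{u(\tau+h)}(\rho)-\rho$ as
\begin{equation*}
	\int_0^{r(\tau)}\pdiff[\hat\integrand(u(\rho,\tau))]{\rho}\xi^{(h)}(\rho)\Perimeter(\rho)\ud{\rho}
		-\hat\integrand(u(r(\tau),\tau))\xi^{(h)}(r(\tau))\Perimeter(r(\tau)).
\end{equation*}

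Dividing by $h$ and letting $h\to 0$, the bulk integrand converges by \eqref{eq:optimal_map_velocity} to $\pdiff[\hat\integrand(u(\rho,\tau))]{\rho}v_\tau(\rho)\Perimeter(\rho)$, producing the interior term of the announced formula. The boundary term is the delicate step: since $F_{u(\tau)}(r(\tau))=1$, the explicit one-dimensional rearrangement \eqref{eq:rearrangement} forces $t_{u(\tau)}^{u(\tau+h)}(r(\tau))=F_{u(\tau+h)}^{-1}(1)=r(\tau+h)$, so $\xi^{(h)}(r(\tau))/h\to r'(\tau)$, yielding the $-\hat\integrand(u(r(\tau),\tau))r'(\tau)\Perimeter(r(\tau))$ contribution that combines with $\diff{\tau}\Perimeter(r(\tau))$ to produce the full $\left(\frac{n-1}{r(\tau)}-\hat\integrand(u(r(\tau),\tau))\right)\Perimeter(r(\tau))r'(\tau)$ prefactor. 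The main obstacle is to justify the convergence of $\xi^{(h)}/h$ to $v_\tau$ strongly enough to pass to the limit in both the bulk and the trace integrals, and to match the upper and lower bounds to upgrade Lemma \ref{th:push_forward_entropy}'s one-sided directional derivatives into a genuine two-sided derivative of $\tau\mapsto\Entropy(u(\tau))$.
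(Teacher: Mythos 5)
Your overall strategy—apply Lemma \ref{th:push_forward_entropy} along the optimal transport map from $u(\tau)$ to $u(\tau+h)$, integrate by parts via Lemma \ref{th:slope_ibp}, pass to the limit using \eqref{eq:optimal_map_velocity}, and fold in the perimeter derivative—is essentially the paper's, but two of your steps do not go through as written.

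First, the boundary term. You cite \eqref{eq:BV_derivative_flux} as ``equivalently Lemma \ref{th:slope_ibp}'', but they are not equivalent: \eqref{eq:BV_derivative_flux} is an \emph{equality} derived for smooth compactly supported test fields $\boldsymbol{\xi}$, while the vector field here, $\mathbf{t}_{u(\tau)}^{u(\tau+h)}-\id$, is merely BV, and Lemma \ref{th:slope_ibp} gives only a one-sided estimate whose boundary contribution is bounded by $(r(\tau+h)-r(\tau))\hat\integrand(u(r(\tau),\tau))\Perimeter(r(\tau))$. Your subsequent claim that $t_{u(\tau)}^{u(\tau+h)}(r(\tau))=F_{u(\tau+h)}^{-1}(1)=r(\tau+h)$ is false: with the paper's pseudo-inverse convention $F_u^{-1}(\sigma)=\sup\{\rho\ge 0:F_u(\rho)\le\sigma\}$, one has $F_{u(\tau+h)}^{-1}(1)=+\infty$, and with any reasonable alternative convention one only obtains the right endpoint of the support of $u(\tau+h)$, which, as the paper explicitly points out, need not equal $r(\tau+h)$ since the support of $u$ can be strictly smaller than the ball. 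So you do not have an identity for the boundary term, only the one-sided estimate.

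Second, the ``matching upper bound''. Swapping the roles of $u(\tau)$ and $u(\tau+h)$ in Lemma \ref{th:push_forward_entropy} produces an estimate whose integrand involves $\hat\integrand(u(\cdot,\tau+h))$ and the reverse transport map $\mathbf{t}_{u(\tau+h)}^{u(\tau)}$. This is not a directly comparable bound: you would additionally have to show that $\hat\integrand(u(\cdot,\tau+h))\to\hat\integrand(u(\cdot,\tau))$ and that $(\mathbf{t}_{u(\tau+h)}^{u(\tau)}-\id)/h$ has the appropriate limit, neither of which is covered by \eqref{eq:optimal_map_velocity} as stated. The paper avoids all this: it proves the lower bound for the forward difference quotient, then obtains the reverse inequality simply by taking $h<0$ in the \emph{same} one-sided estimate—the inequality flips upon dividing by a negative $h$, and since Lemma \ref{th:push_forward_entropy} and Lemma \ref{th:slope_ibp} keep $u(\tau)$ as the reference density regardless of the sign of $h$, and \eqref{eq:optimal_map_velocity} is already a two-sided limit, the same expression appears on both sides. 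That left-derivative trick removes the need for both an exact boundary identity and a matching upper bound, and is the missing idea in your argument.
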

\begin{proof}
	For $h>0$, let $t_h$ be the optimal transport map from $u_t$ to $u_{t+h}$. Then, as in the proof of the Theorem \ref{th:local_slope},
	\begin{multline}
		\Energy(r(\tau+h), u(\tau+h))-\Energy(r(\tau), u(\tau)) \\
			\ge\left(\frac{n-1}{r(\tau)}-\hat\integrand(u(r(\tau), \tau))\right)
							\left(r(\tau+h)-r(\tau)\right)\Perimeter(r(\tau)) \\
					{}+\int_0^{r(\tau)}\pdiff[\hat\integrand(u(\rho, \tau))]{\rho}
							{}\cdot(t_{u(\tau)}^{u(\tau+h)}(\rho)-\rho)\Perimeter(\rho)\ud{\rho}
	\end{multline}
	Using \eqref{eq:optimal_map_velocity}, dividing by $h$, and taking limits for $h\downto 0$ yields
	\begin{equation}
		\diff{\tau}\Energy(r(\tau), u(\tau))
			\ge\left(\frac{n-1}{r(\tau)}-\hat\integrand(u(r(\tau), \tau))\right)\Perimeter(r(\tau))r'(\tau)
				{}+\int_0^{r(\tau)}\pdiff[\hat\integrand(u(\rho, \tau))]{\rho}v_\tau(\rho)\Perimeter(\rho)\ud{\rho}
	\end{equation}
	where $v$ is characterized by Remark \ref{rem:minimal_norm_solution}. The converse inequality follows by studying the left derivative.
\end{proof}

\section{Weak Solutions}
\label{s:WeakSol}

A natural question is whether a maximal slope curve obtained in Theorem \ref{th:convexity_results} is a solution of \eqref{eq:osmosis_scaled}, and in what sense. Bearing in mind Lemma \ref{th:frechet_derivative}, it is expected that the argument from the beginning of the previous section justifying the definition of a curve of maximal slope can in this case be reversed to obtain a weak form of \eqref{eq:osmosis_scaled}. Of course, the choice for $\integrand$ should be kept in mind:
\begin{equation}
	\left\{\begin{aligned}
		u_t &																																	=\diffusion\Laplace\hat\integrand(u), &\qquad&	\text{for $x\in B_{r(t)}$, $t>0$}, \\
		-\diffusion\left.\diff{\rho}\hat\integrand(u(\rho))\right|_{\rho=r} &	=u(r)r'(t), &&																	\text{for $t>0$} \\
		r' &																																	=-\frac{n-1}{r}+\hat\integrand(u(r)) &&					\text{for $t>0$.}
	\end{aligned}\right.
\end{equation}

It is possible to formulate this problem in a neater way. Note that
\begin{equation}
	\nabla\hat\integrand(u)
		=\nabla\left(u\integrand'(u)\right)-\nabla\integrand(u)
		=\integrand'(u)\nabla u+u\nabla\integrand'(u)-\integrand'(u)\nabla u
		=u\nabla\integrand'(u)
\end{equation}
if $\integrand$ is sufficiently smooth. Therefore, the first equation in \eqref{eq:osmosis_generalized} can be replaced by $u_t=\diffusion\Laplace\hat\integrand(u)$. Similarly, the boundary condition for $u$ can be rewritten:
\begin{equation}\label{eq:osmosis_generalized}
	\left\{\begin{aligned}
		u_t &																																=\diffusion\Div\left(u\nabla\integrand'(u)\right), &\qquad&	\text{for $x\in B_{r(t)}$, $t>0$}, \\
		-\diffusion\left.\diff{\rho}\integrand'(u(\rho))\right|_{\rho=r} &	=r'(t), &&																									\text{for $t>0$} \\
		r' &																																=-\frac{n-1}{r}+\hat\integrand(u(r)) &&											\text{for $t>0$.}
	\end{aligned}\right.
\end{equation}
This way of writing the problem makes the structure clearer: particles have velocity equal to $\nabla\integrand'(u)$, and the velocity of the particles at the boundary should match the normal velocity of the boundary.

In order to show that a curve of maximal slope can be regarded a weak solution of \eqref{eq:osmosis_generalized}, some integral identities are derived that characterize solutions when smoothness is assumed. By smooth, it is meant that $t\mapsto r(t)$ is continuously differentiable, and $(x, t)\mapsto u(x, t)$ is twice continuously differentiable on the domain
\[
	\Set{(\rho, t):0<\rho<r(t); t>0}
\]
Moreover, it is shown that $\Energy$ is a Lyapunov functional for \eqref{eq:osmosis_generalized}. As before, when applying the divergence rule, $u(t)$ is interpreted as a function on $\Reals^n$.

If $u$ is smooth in the sense explained above, and $\varphi\in C^\infty_c(\Reals^n\times(0, \infty))$ is radially symmetric,
\begin{equation}\begin{split}
	-\int_0^\infty\int_{B_{r(t)}}u_t(x, t)\varphi(x, t)\ud{x}\ud{t}
	&	=\int_0^\infty\int_{B_{r_t}}u_t(x, t)\varphi_t(x, t)\ud{x}\ud{t} \\
	&	\hspace{1cm}	{}+\int_0^\infty u(r(t), t)\tilde\varphi(t)r'(t)\Perimeter(r(t))\ud{t},
\end{split}\end{equation}
where $\tilde\varphi(t)$ is the value of $\varphi(., t)$ on $\bdry B_{r(t)}$. Moreover,
\begin{equation}\begin{split}
	-\int_0^\infty\int_{B_{r(t)}}u_t(x, t)\varphi(x, t)\ud{x}\ud{t}
	&	=-\diffusion\int_0^\infty\int_{B_{r(t)}}\Laplace\hat\integrand(u(x, t))\varphi(x, t)\ud{x}\ud{t} \\
	&	=\diffusion\int_0^\infty\int_{B_{r(t)}}u(x, t)\nabla\hat\integrand(u(x, t))\cdot\nabla\varphi(x, t)\ud{x}\ud{t} \\
	&	\hspace{1cm}	{}-\int_0^\infty\diffusion\left.\diff{\rho}\hat\integrand(u(\rho, t))\right|_{\rho=r}\tilde\varphi(t)\Perimeter(r(t))\ud{t}
\end{split}\end{equation}
Combining these two identities with the Neumann boundary condition yields
\begin{equation}\label{eq:weak_diffusion}\begin{split}
	\frac{1}{\diffusion}\int_0^\infty\int_{B_{r(t)}}u(x, t)\varphi_t(x, t)\ud{x}\ud{t}
	&	=\int_0^\infty\int_{B_{r(t)}}\nabla\hat\integrand(u(x, t))\cdot\nabla\varphi(x, t)\ud{x}\ud{t}.
\end{split}\end{equation}
By a similar computation, for any radially symmetric $\psi\in C^\infty_c(\Reals^n\times(0, \infty))$,
\begin{equation}\label{eq:weak_boundary}\begin{split}
	\int_0^\infty\int_{B_{r(t)}}\psi_t(x, t)\ud{x}\ud{t}
	&	=-\int_0^\infty\tilde\psi(t)r'(t)\Perimeter(r(t))\ud{t} \\
	&	=\int_0^\infty\left(\frac{n-1}{r(t)}-\hat\integrand(u(r(t), t))\right)\tilde\psi(t)\Perimeter(r(t))\ud{t},
\end{split}\end{equation}
where $\tilde\psi$ is defined analogous to $\tilde\varphi$. From these identities, it follows that $\Energy$ is indeed a Lyapunov functional for \eqref{eq:osmosis_generalized}:
\begin{gather*}
	\diff{t}\left(\Perimeter(r(t))+\int_{B_{r(t)}}\integrand(u(x, t))\ud{x}\right) \hspace{7cm}\\\begin{split}\hspace{2cm}
	&	=\Perimeter'(r(t))r'(t)+\int_{B_{r_t}}\integrand'(u(x, t))u_t(x, t)\ud{x}
				{}+\integrand(u(r(t), t))r'(t)\Perimeter(r(t)) \\
	& =\left(\frac{\Perimeter'(r(t))}{\Perimeter(r(t))}-\hat\integrand(u(r(t), t))\right)r'(t)\Perimeter(r(t)) \\
	&\hspace{1cm}	{}+\diffusion\int_{B_{r(t)}}\integrand'(u(x, t))\Div\left(u(x, t)\nabla\integrand'(u(x, t))\right)\ud{x} \\
	&\hspace{1cm}	{}+u(r(t), t)\integrand'(u(r(t), t))r'(t)\Perimeter(r(t)) \\
	& =-\left(\frac{n-1}{r(t)}-\hat\integrand(u(r(t), t))\right)^2\Perimeter(r(t))
				{}+\diffusion\int_{B_{r(t)}}|\nabla\integrand'(u(x, t)|^2u(x, t)\ud{x} \\
	&\hspace{1cm}	{}+\integrand'(u(r(t), t))\Perimeter(r(t))
									\left(u(r(t), t)r'(t)+\diffusion\left.\diff{\rho}\hat\integrand(u(\rho, t))\right|_{\rho=r(t)}\right) \\
	& =-\left(\frac{n-1}{r(t)}-\hat\integrand(u(r(t), t))\right)^2\Perimeter(r(t))
				{}-\diffusion\int_{B_r(t)}\frac{|\nabla\hat\integrand(u(x, t))|^2}{u(x, t)}\ud{x},
\end{split}\end{gather*}
since 
\begin{equation}
	u\left[\integrand'\circ u\right]'=\left[\hat\integrand\right]'.
\end{equation}

If the integral identities \eqref{eq:weak_diffusion}, \eqref{eq:weak_boundary}, together with monotonicity of $\Energy$ along a trajectory are accepted as a definition of weak solution of \eqref{eq:osmosis_generalized}, the following theorem simply states that curves of maximal slope of $\Energy$ are weak solutions of \eqref{eq:osmosis_generalized}. Loosely speaking, this means that \eqref{eq:osmosis_generalized} can be formulated as a gradient flow.

\begin{thm}\label{th:weak_solution}
	Suppose that $t\mapsto (r(t), u(t))\in\States$ is continuous. Then $(r(t), u(t))$ is a curve of maximal slope for $\Energy$ if and only if $\Energy(r(t), u(t))$ is decreasing, and \eqref{eq:weak_diffusion} and \eqref{eq:weak_boundary} hold for all radially symmetric test functions $\phi, \psi\in C^\infty_c(\Reals^n\times(0, \infty))$.
\end{thm}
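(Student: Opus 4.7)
The plan is to read both sides of the equivalence as two expressions of the same chain rule for $\Energy$ along $(r, u)$: Lemma \ref{th:frechet_derivative} provides an explicit formula for $\diff{t}\Energy$ in terms of $r'$ and the Wasserstein velocity $v_t$, while Theorem \ref{th:local_slope} gives $|\partial\Energy|^2$ in terms of $(n-1)/r - \hat\integrand(u(r))$ and $\partial_\rho\hat\integrand(u)/u$. The two match through a pair of Young inequalities, which saturate precisely when the explicit strong form of \eqref{eq:osmosis_generalized} holds.

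\emph{Necessity.} Suppose $(r, u)$ is a curve of maximal slope. Theorem \ref{th:convexity_results} gives \eqref{eq:strong_maximal_slope}, so $\Energy(r(t), u(t))$ is decreasing and $|\partial\Energy|(r(t), u(t)) < +\infty$ for $t > 0$. Taking $v_t$ minimal in \eqref{eq:rad_symm_cont} (Remark \ref{rem:minimal_norm_solution}), Lemma \ref{th:frechet_derivative}, Lemma \ref{th:states_abs_cont}, and Theorem \ref{th:local_slope} rewrite \eqref{eq:strong_maximal_slope} as
\[
	-\left(\frac{n-1}{r} - \hat\integrand(u(r))\right)\Perimeter(r)\,r' - \int_0^r \pdiff[\hat\integrand(u)]{\rho}\,v_t\,\Perimeter\ud{\rho}
		= \tfrac{1}{2}|\partial\Energy|^2 + \tfrac{1}{2}|(r, u)'|^2.
\]
Bounding the left-hand side by the right-hand side term-by-term via Young's inequality (with the appropriate $\diffusion$ weight applied to the integral term and Cauchy-Schwarz in $L^2(u\Perimeter)$) gives two inequalities, both of which must saturate. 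This forces
\[
	r'(t) = \hat\integrand(u(r(t), t)) - \frac{n-1}{r(t)}, \qquad
	v_t(\rho) = -\diffusion\,\frac{\partial_\rho\hat\integrand(u(\rho, t))}{u(\rho, t)}
\]
for almost every $t > 0$. Inserting $v_t$ into \eqref{eq:rad_symm_cont} yields \eqref{eq:weak_diffusion}, and inserting $r'$ into Lemma \ref{th:ball_continuity} yields \eqref{eq:weak_boundary}.

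\emph{Sufficiency.} Conversely, Lemma \ref{th:ball_continuity} applied to \eqref{eq:weak_boundary} shows $r$ is absolutely continuous with $r'(t) = \hat\integrand(u(r(t), t)) - (n-1)/r(t)$, while Theorem \ref{th:continuity_equation} applied to \eqref{eq:weak_diffusion} shows $u$ is absolutely continuous in $W_2$ with admissible velocity $v_t = -\diffusion\,\partial_\rho\hat\integrand(u)/u$. Lemma \ref{th:states_abs_cont} lifts these to absolute continuity of $(r, u)$ in $\States$. Substituting these explicit $r'$ and $v_t$ into Lemma \ref{th:frechet_derivative} produces $\diff{t}\Energy = -|\partial\Energy|^2(r(t), u(t))$ by Theorem \ref{th:local_slope} (so $|\partial\Energy|$ is finite almost everywhere), while the same substitution gives $|(r, u)'|^2 \le \Perimeter(r)|r'|^2 + \|v_t\|^2_{L^2(u\Perimeter)}/\diffusion = |\partial\Energy|^2$ from Lemma \ref{th:states_abs_cont}. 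Hence $\diff{t}\Energy = -|\partial\Energy|^2 \le -\tfrac{1}{2}|\partial\Energy|^2 - \tfrac{1}{2}|(r, u)'|^2$, which is exactly the curve-of-maximal-slope inequality \eqref{eq:max_slope}.

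\emph{Main obstacle.} The delicate point is the necessity direction: extracting two independent pointwise identities for $r'$ and $v_t$ from the single scalar equation \eqref{eq:strong_maximal_slope}. This hinges on the additive splitting of both $|(r, u)'|^2$ and $|\partial\Energy|^2$ between the ball and profile contributions, provided by Lemma \ref{th:states_abs_cont} and Theorem \ref{th:local_slope}, together with the minimality of $v_t$ from Remark \ref{rem:minimal_norm_solution}; only this decoupling guarantees that equality in the sum of two Young inequalities forces equality in each one separately. The residual technical nuisance is the null set of times where $|\partial\Energy|$ might fail to be finite, which is harmless by Theorem \ref{th:convexity_results}.
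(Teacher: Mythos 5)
Your argument is correct and follows the same route as the paper: feed the explicit chain rule from Lemma \ref{th:frechet_derivative} and the slope formula of Theorem \ref{th:local_slope} into the maximal-slope inequality, let the two Young inequalities saturate to recover the strong-form equations for $r'$ and $v_t$, and translate these back into \eqref{eq:weak_diffusion} and \eqref{eq:weak_boundary} via Lemma \ref{th:ball_continuity} and Theorem \ref{th:continuity_equation}. The sufficiency direction is likewise the paper's argument read in reverse.

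One caveat on the necessity side. You write that Theorem \ref{th:convexity_results} ``gives \eqref{eq:strong_maximal_slope}.'' That theorem only establishes the strong identity \eqref{eq:strong_maximal_slope} for the (unique) minimizing movement, not for an a priori arbitrary curve of maximal slope, so as stated the citation does not apply to the object you are given. The paper sidesteps this by starting from the defining one-sided inequality of the type \eqref{eq:max_slope}: combining the upper bound $\phi'(t)\le -\tfrac12|\partial\Energy|^2-\tfrac12|(r,u)'|^2$ with the exact formula for $\phi'(t)$ from Lemma \ref{th:frechet_derivative} and the lower bound furnished by Young yields \eqref{eq:strong_maximal_slope} as a consequence rather than a hypothesis. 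The facts you actually use (monotonicity of $t\mapsto\Energy(r(t),u(t))$ and finiteness of $|\partial\Energy|$ almost everywhere) already follow from the definition of a curve of maximal slope, so the conclusion is unaffected, but the appeal to Theorem \ref{th:convexity_results} should be replaced by this direct argument, or else supplemented by the observation that $\lambda$-convexity forces uniqueness of curves of maximal slope so that the given curve is in fact the minimizing movement.
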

\begin{proof}
	Assume first that $(r(t), u(t))$ is a curve of maximal slope for $\Energy$. By definition, this means that $\phi(t):=\Energy(r(t), u(t))$ is decreasing, and $t\mapsto u(t)$ and $t\mapsto r(t)$ are both absolutely continuous. Then there exists a function $v$ satisfying \eqref{eq:continuity_equation} and $\|v_\tau\|_{L^2((0, \infty), u\Perimeter)}=|u'|(t)$ for almost every $t>0$. Hence, as in the proof of Theorem \ref{th:continuity_equation}, $\mathbf{v}_\tau(x):=v_\tau(|x|)\frac{x}{|x|}$ solves
	\begin{equation}\label{eq:u_derivative}
		\int_0^\infty\int_{B_{r(t)}}\varphi_t(x, t)u(x, t)\ud{x}\ud{t}
			=-\int_0^\infty\int_{B_{r(t)}}\nabla\varphi_t\cdot\mathbf{v}(x, t)u(x, t)\ud{x}\ud{t}
	\end{equation}
	for any radially symmetric $\varphi\in C^\infty_c(\Reals^n\times(0, \infty))$. Using Lemma \ref{th:frechet_derivative},
	\begin{equation}
		\phi'(t)=\left(\frac{n-1}{r_t}-\hat\integrand(u_t(r_t))\right)\Perimeter(r_t)\diff[r_t]{t}
				{}+\int_0^{r(\tau)}\pdiff[\hat\integrand(u(\rho, \tau))]{\rho}v_\tau(\rho)\Perimeter(\rho)\ud{\rho}
	\end{equation}
	for almost all $t>0$, where $\phi(t):=\Energy(r(t), u(t))$. On the other hand,
	\begin{equation}\begin{split}
		\phi'(t)
		&	\le-\frac{|(r, u)'|^2(t)}{2}-\frac{|\partial\Energy|^2(r(t), u(t))}{2} \\
		&	=-\frac{1}{2}\left(\left|r'(t)\right|^2\Perimeter(r(t))
				{}+\left|\frac{n-1}{r(t)}-\hat\integrand(u(r(t), t))\right|^2\Perimeter(r(t))\right. \\
		&	\hspace{2.5cm}	\left.{}+\frac{\|\mathbf{v}_t\|^2_{L^2(\Reals^n, u(t))}}{\diffusion}
				{}+\diffusion\left\|\frac{\nabla\hat\integrand(u(x, t))}{u(x, t)}\right\|^2_{L^2(\Reals^n, u(t))}\right)
	\end{split}\end{equation}
	for almost all $t>0$. By Young's equality, it follows that for all $t>0$ such that both hold,
	\begin{gather*}
		r'(t)=-\frac{n-1}{r(t)}+\hat\integrand(u(r(t), t)), \\
		\mathbf{v}_t=-\diffusion\frac{\nabla\hat\integrand(u(t))}{u(t)}.
	\end{gather*}
	Substituting the latter in \eqref{eq:u_derivative} yields \eqref{eq:weak_diffusion}. Together with \eqref{eq:ball_continuity}, the first implies \eqref{eq:weak_boundary}.
	
	Conversely, note that \eqref{eq:weak_diffusion} is equivalent to \eqref{eq:continuity_equation} with a given expression for $v(t)$ substituted. Therefore, $t\mapsto u(t)$ is absolutely continuous with respect to the Wasserstein metric. Similarly, by Lemma \ref{th:ball_continuity}, $t\mapsto r(t)$ is absolutely continous, and gives and expression for $r'(t)$ almost everywhere. Then Lemma \ref{th:frechet_derivative} yields
	\begin{equation}
		\phi'(t)
			=\left(\frac{n-1}{r(t)}-\hat\integrand(u(r(t), t))\right)\Perimeter(r(t))r'(t)
				{}+\int_0^{r(\tau)}\pdiff[\hat\integrand(u(\rho, \tau))]{\rho}v_\tau(\rho)\Perimeter(\rho)\ud{\rho}
	\end{equation}
	for almost every $t>0$. 
	
	Having explicit expressions for $\diff[r_t]{t}$ and $v_t$ at hand, it follows from Young's inequality that $t\mapsto(r(t), u(t))$ is a curve of maximal slope.
\end{proof}

Note that the splitting of the problem from the introduction plays an important role: the equations related to absolute continuity give the equations relating the evolution of $u$ and $r$ to different velocities, whereas the gradients give the velocity in terms of the current state. As argued above, the divergence formulation \eqref{eq:osmosis_generalized}, also has this structure.

\section{Varying permeability}
\label{s:permeability}

As stated in the introduction, two variants of the cell swelling model could be considered. Apart from the above, which corresponds to a fixed permeability of the membrane, an alternative model based on aquaporins could be studied. In this section, the model will be adjusted to reflect this, and it will be shown that the gradient flow approach above also works for the resulting problem, with only slightly different results.

\subsection{Adjusting the model}

If it is assumed that the number of aquaporins on the cell membrane is fixed, and the permeance of an aquaporin is also fixed, the total permeance of the membrane is fixed. By radial symmetry, the permeability of the membrane is equal to $\frac{\Perimeter(r(0))}{\Perimeter(r(t))}$, up to a multiplicative constant. This means that the new equation for the normal velocity of the membrane will be
\begin{equation}
	r'(t)=\frac{\Perimeter(r(0))}{\Perimeter(r(t))}\left(-\surfacetension\frac{n-1}{r(t)}+\osmosis u(r(t), t)\right).
\end{equation}
As before, all but one of the parameters can be made equal to $1$. The one remaining parameter will again be $\diffusion$. In this case, however, the scaling factor for $t$ also depends on $r(0)$.

From a modeling point of view, there is only one reasonable way to adapt the gradient flow approach to this new situation: since permeability basically determines how hard it is to move the membrane, the metric for the $r$-coordinate should be modified. Remembering that $\setdist$ arose from minimizing 
\[
	\int_0^1\|v_t\|_{L^2(\partial B_{r(t)})}\ud{t},
\]
it seems reasonable to integrate some other norm of $v_t$. It turns out that choosing the reciprocal of the permeability, that is, $\Perimeter$ as a weight in the $L^2$-norm of $v_t$ is the right choice. Therefore, define
\begin{equation}
	\hat\setdist(r_0, r_1)
		:=\int_{(r_0, r_1)}\Perimeter(\rho)\ud{\rho}
		=\Lebesgue^n(B_{r_1}\triangle B_{r_0})
		=\omega_n\left|r_1^n-r_0^n\right|.
\end{equation}
Obviously, the metric for $\Cells\times\Profiles$ will be the same combination of $\hat\setdist$ and $W_2$ as before:
\begin{equation}
	\hat\varrho((r, u), (s, w)):=\left(\hat\setdist^2(r, s)+\frac{W_2^2(u, w)}{\diffusion}\right)^\frac{1}{2}.
\end{equation}

\subsection{Implications of changing the metric}

Note that the new definition does not change the topology on $\Cells$, which means that most of the results still hold. Some of the finer properties, however, have changed. In this section, the results from the previous sections will be adapted to reflect the new model.

First of all, since the metric has changed, the isometry from $\Cells$ to $[0, +\infty)$ has changed as well. The isometry now reads
\begin{equation}
	\hat\iota_n:\Cells\to[0, +\infty):r\mapsto\omega_n r^n.
\end{equation}
All other results from Section \ref{s:GMM} are the same.

The metric derivative $|.'|_{\hat\setdist}$ is of course also different. Absolute continuity in $[0, +\infty)$ and $\Cells$ are of course still equivalent, but
\begin{equation}
	|r'|_{\hat\setdist}(\tau)=\Perimeter(r(\tau))\left|\diff[r(\tau)]{\tau}\right|,
\end{equation}
for an absolutely continuous curve $\tau\mapsto r(\tau)$. The obvious adaptation of Lemma \ref{th:ball_continuity} also holds.

\begin{lem}\label{th:ball_continuity_permeability}
	Let $\tau\mapsto r(\tau)$ be a curve in $\Cells$. Then $r(\tau)$ is absolutely continuous if and only if it is continuous and there exists a function $g$ such that $g(\tau)\Perimeter(r(\tau))\in L^1_\loc((0, \infty))$,
	\begin{equation}
		\int_0^\infty\int_0^{r(\tau)}\pdiff[\psi(\rho, \tau)]{\tau}\Perimeter(\rho)\ud{\rho}\ud{t}
			=-\int_0^\infty g(\tau)\psi(r(\tau), \tau)\Perimeter(r(\tau))\ud{\tau}
	\end{equation}
	for all $\psi\in C^\infty_c([0, \infty)\times(0, \infty))$. In this case, $|r'|_{\hat\setdist}(\tau)=|g(\tau)|\Perimeter(r(\tau))$ for almost every $\tau>0$.
\end{lem}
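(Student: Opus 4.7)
The plan is to follow the proof of Lemma \ref{th:ball_continuity} essentially verbatim, with the isometry $\hat\iota_n$ replacing $\iota_n$ and a slightly different choice of test function at the end of the converse direction.

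For the forward implication, I would first transfer absolute continuity of $\tau\mapsto r(\tau)$ with respect to $\hat\setdist$ to absolute continuity of $\tau\mapsto\hat\iota_n(r(\tau))=\omega_n r(\tau)^n$ as a real-valued curve. Differentiability almost everywhere of this composition, combined with the fact that $\hat\iota_n$ is smooth on $(0,\infty)$ with derivative $n\omega_n r^{n-1}=\Perimeter(r)$, gives the identity $|r'|_{\hat\setdist}(\tau)=\Perimeter(r(\tau))|r'(\tau)|$ for a.e.\ $\tau$. The weak formulation then follows exactly as in the forward direction of Lemma \ref{th:ball_continuity}: compute $\diff{\tau}\int_0^{r(\tau)}\psi(\rho,\tau)\Perimeter(\rho)\ud{\rho}$ by writing it as a limit of difference quotients, pull the $\tau$-integral through the quotient, split
\begin{equation*}
	\int_0^{r(\tau)}\psi(\rho,\tau)\Perimeter(\rho)\ud{\rho}
		-\int_0^{r(\tau+h)}\psi(\rho,\tau)\Perimeter(\rho)\ud{\rho}
	=-\int_{r(\tau)}^{r(\tau+h)}\psi(\rho,\tau)\Perimeter(\rho)\ud{\rho},
\end{equation*}
and use continuity of $\psi$ and local integrability of $r'\Perimeter(r)$ to pass to the limit, identifying $g(\tau)=r'(\tau)$.

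For the converse, assume the weak equation holds. Substituting $\psi(\rho,\tau)=\phi(\rho)\zeta_k(\tau)$ with $\phi\in C^\infty_c([0,\infty))$ and $\zeta_k$ a smooth approximation of $\chi_{[t_0,t_1]}$, then passing to the limit as in the proof of Lemma \ref{th:ball_continuity}, I obtain
\begin{equation*}
	\int_{t_0}^{t_1}g(\tau)\phi(r(\tau))\Perimeter(r(\tau))\ud{\tau}
		=\int_{r(t_0)}^{r(t_1)}\phi(\rho)\Perimeter(\rho)\ud{\rho}.
\end{equation*}
The only point where the new metric alters the argument is the final choice of $\phi$: since $\hat\setdist(r(t_0),r(t_1))=|\int_{r(t_0)}^{r(t_1)}\Perimeter(\rho)\ud{\rho}|$, no weight factor is needed, and I would simply take $\phi$ to be a smooth cutoff equal to $1$ on an interval containing the (bounded) range of $r$ on $[t_0,t_1]$, which is legitimate because $r$ is continuous. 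This yields $\hat\setdist(r(t_0),r(t_1))\le|\int_{t_0}^{t_1}g(\tau)\Perimeter(r(\tau))\ud{\tau}|$ for all $t_0<t_1$, hence absolute continuity with metric derivative $|g(\tau)|\Perimeter(r(\tau))$ by minimality of the metric derivative.

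The main obstacle is purely bookkeeping: justifying the limit passages with $\zeta_k$ and the cutoff $\phi$, which proceeds exactly as in Lemma \ref{th:ball_continuity} and therefore does not introduce any genuinely new difficulty. Compared to the earlier proof, this version is in fact slightly simpler, since one avoids the awkward choice $\phi(\rho)=\Perimeter(\rho)^{-1/2}$ that was required there to match the integrand of $\setdist$.
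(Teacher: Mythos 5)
Your proposal is correct and matches what the paper intends: the paper simply asserts that "the obvious adaptation of Lemma \ref{th:ball_continuity} also holds" without supplying a separate proof, and you carry out precisely that adaptation, replacing $\iota_n$ by $\hat\iota_n$ so that the Jacobian factor $\sqrt{\Perimeter}$ becomes $\Perimeter$ throughout. Your observation that the final test function can be taken to be a smooth cutoff equal to $1$ (rather than the slightly awkward $\Perimeter^{-1/2}$ needed in the original proof) is accurate, and indeed makes the approximation argument in the converse direction marginally cleaner.
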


As before, the isometry $\hat\iota_n$ can be used to characterize the geodesics of $\hat\setdist$:
\begin{equation}
	r(\tau)=\left((1-\tau)r(0)^n+\tau r(1)^n\right)^\frac{1}{n}\ge(1-\tau)r(0)+\tau r(1),
\end{equation}
which means that combining geodesics for $r$ and $u$ will give a geodesic for $\hat\varrho$.

A more substantial difference is the change in $\lambda$-convexity of $\Perimeter$. By the same argument as before, $\lambda$-convexity of $\Perimeter$ is now equivalent to $\lambda$-convexity of the map
\begin{equation}
	s\mapsto\hat\theta_n r^{n-1}{n},
\end{equation}
where $\theta_n>0$ is a constant. Before, $\Perimeter$ was $0$-convex for $n\ge 3$, but this is not the case anymore: for every $n$, the situation is similar to the situation $n=2$ before: $\Perimeter$ is \emph{not} $\lambda$-convex globally, but only $\lambda$-convex with $\lambda<0$ on sublevels of $\Energy$.

Together with the observation that $s\mapsto\frac{1}{2}\hat\setdist^2(r, s)$ is still $1$-convex for all $r$, \cite[Theorems 2.4.15 and 4.0.4]{Ambrosio_Gigli_Savare} still apply. Since $\lambda$ is still negative, no stronger contraction results follow from these theorems.

Of course, the local slope $\partial\Energy$ with respect to $\hat\varrho$ should be different. Since the proof of Theorem \ref{th:local_slope} largely deals with the mass component, it only needs a small modification.

\begin{thm}\label{th:local_slope_permeability}
	Let $(r, u)\in\States$ be given. Then $\Energy$ has finite local slope at $(r, u)$ if and only if $\hat\integrand(u)\in W^{1, 1}(B_r)$ with 
	\begin{equation}
		\frac{\nabla\hat\integrand(u)}{u}\in L^2(\Reals^n, u).
	\end{equation}
	In this case,
	\begin{equation}
		|\partial\Energy|(r, u)
			=\left(\left|\frac{n-1}{r}-\hat\integrand(u(r))\right|^2
				{}+\diffusion\left\|\frac{\nabla\hat\integrand(u)}{u}\right\|^2_{L^2(\Reals^n, u)}\right)^\frac{1}{2}.
	\end{equation}
\end{thm}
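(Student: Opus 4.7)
The plan is to adapt the proof of Theorem \ref{th:local_slope} almost verbatim, carefully tracking how the replacement $\setdist \rightsquigarrow \hat\setdist$ rescales the boundary contribution. The reasoning about $\hat\integrand(u)$ being $BV$, its restriction lying in $W^{1,1}(B_r)$, and the singular part of the distributional derivative being concentrated on $\bdry B_r$ is structural and transfers without change; only the dual pairing at the end is reweighted.

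For the forward implication, suppose $|\partial\Energy|(r,u) < +\infty$, pick a radial $\boldsymbol{\xi}(x) = \xi(|x|)x/|x|$, and set $X_\tau(x) = x + \tau\boldsymbol{\xi}(x)$. The key modification is that by Lemma \ref{th:ball_continuity_permeability} the $\hat\setdist$-metric derivative of $\tau\mapsto r+\tau\xi(r)$ equals $|\xi(r)|\Perimeter(r)$ (rather than $|\xi(r)|\sqrt{\Perimeter(r)}$), so the metric derivative at $0$ of $\tau\mapsto(X_\tau(r), (X_\tau)_\#u)$ with respect to $\hat\varrho$ is
\begin{equation*}
\left(|\xi(r)|^2\Perimeter(r)^2 + \frac{\|\xi\|^2_{L^2((0,\infty), u\Perimeter)}}{\diffusion}\right)^{1/2}.
\end{equation*}
Together with Lemma \ref{th:push_forward_entropy}, the analogue of \eqref{eq:EL_inequality} reads
\begin{equation*}
\left|\xi(r)\Perimeter'(r) - \int_{\Reals^n}\hat\integrand(u(x))\tr\nabla\boldsymbol{\xi}(x)\ud{x}\right|
\le |\partial\Energy|(r,u)\left(|\xi(r)|^2\Perimeter(r)^2 + \frac{\|\xi\|^2_{L^2((0,\infty), u\Perimeter)}}{\diffusion}\right)^{1/2}.
\end{equation*}
Choosing first $\boldsymbol{\xi} = -x$, then compactly supported $\boldsymbol{\xi}$, and finally $\boldsymbol{\xi}$ with $\xi(r)=0$, the Riesz argument of Theorem \ref{th:local_slope} unchanged gives $\hat\integrand(u)|_{B_r}\in W^{1,1}(B_r)$ and the integration-by-parts identity \eqref{eq:BV_derivative_flux}. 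Substituting it back produces
\begin{equation*}
\left|\left(\frac{n-1}{r}-\hat\integrand(u(r))\right)\Perimeter(r)\xi(r) + \int_{B_r}\frac{\nabla\hat\integrand(u)}{u}\cdot\boldsymbol{\xi}\,u\ud{x}\right|
\le |\partial\Energy|(r,u)\left(|\xi(r)|^2\Perimeter(r)^2 + \frac{\|\boldsymbol{\xi}\|^2_{L^2(\Reals^n, u)}}{\diffusion}\right)^{1/2}.
\end{equation*}
Applying the weighted Cauchy--Schwarz duality (the coefficient of $\xi(r)$ in the linear functional is $(\frac{n-1}{r}-\hat\integrand(u(r)))\Perimeter(r)$ while the weight on $|\xi(r)|^2$ on the right is $\Perimeter(r)^2$, so the $\Perimeter(r)$-factors cancel), one gets the desired lower bound
\begin{equation*}
|\partial\Energy|(r,u) \ge \left(\left|\frac{n-1}{r}-\hat\integrand(u(r))\right|^2 + \diffusion\left\|\frac{\nabla\hat\integrand(u)}{u}\right\|^2_{L^2(\Reals^n, u)}\right)^{1/2}.
\end{equation*}

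For the reverse implication, take $(s,w)\in\States$ with $\Energy(s,w)\le\Energy(r,u)$, apply Lemmas \ref{th:push_forward_entropy} and \ref{th:slope_ibp} to the radial transport map $\boldsymbol{t}_u^w$, and use convexity of $r\mapsto\Perimeter(r)$ exactly as in Theorem \ref{th:local_slope}. The only adjustment is in the final Cauchy--Schwarz: since $\hat\setdist^2(r,s) = \Perimeter(r)^2(s-r)^2 + o((s-r)^2)$, the natural pairing is
\begin{equation*}
\left|\left(\frac{n-1}{r}-\hat\integrand(u(r))\right)(s-r)\Perimeter(r) + \int_{\Reals^n}\frac{\nabla\hat\integrand(u)}{u}\cdot(\boldsymbol{t}_u^w-\id)u\ud{x}\right|
\le \left(\left|\frac{n-1}{r}-\hat\integrand(u(r))\right|^2 + \diffusion\left\|\frac{\nabla\hat\integrand(u)}{u}\right\|^2\right)^{1/2}\!\cdot\hat\varrho((r,u),(s,w)),
\end{equation*}
where I have used $\|\boldsymbol{t}_u^w-\id\|_{L^2(\Reals^n,u)} = W_2(u,w)$. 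Dividing by $\hat\varrho$ and taking the limsup as $(s,w)\to(r,u)$ yields the matching upper bound on $|\partial\Energy|$ and, in particular, finiteness of the local slope.

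The main (and essentially only) delicate point is the bookkeeping in the weighted Cauchy--Schwarz duality: the new $\hat\setdist$ supplies a factor $\Perimeter(r)$ rather than $\sqrt{\Perimeter(r)}$ on the boundary side, so the factor $\Perimeter(r)$ that multiplied $((n-1)/r - \hat\integrand(u(r)))^2$ in Theorem \ref{th:local_slope} is squared in the right-hand side weight and cancels the $\Perimeter(r)$ factor on the left, producing the bare $|(n-1)/r - \hat\integrand(u(r))|^2$ term. All $BV$/$W^{1,1}$ machinery, the approximation of transport maps, and the application of Lemmas \ref{th:push_forward_entropy}--\ref{th:slope_ibp} go through without modification.
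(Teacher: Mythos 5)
Your proof is correct and follows essentially the same route as the paper's: adapt the argument of Theorem \ref{th:local_slope} by replacing the $\setdist$-metric derivative $|\xi(r)|\sqrt{\Perimeter(r)}$ with the $\hat\setdist$-metric derivative $|\xi(r)|\Perimeter(r)$, then track the resulting weight $\Perimeter^2(r)$ through the duality (forward direction) and through Cauchy--Schwarz applied to the transport-map estimate (converse), with all the $BV$/$W^{1,1}$ structure and Lemmata \ref{th:push_forward_entropy}--\ref{th:slope_ibp} reused unchanged. Your bookkeeping of the $\Perimeter(r)$-cancellation in the weighted duality is exactly the point the paper is making when it remarks that ``the estimate \eqref{eq:EL_inequality} now should be'' the one with $\Perimeter^2(r)$.
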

\begin{proof}
	The proof is similar to the proof of Theorem \ref{th:local_slope}. The estimate \eqref{eq:EL_inequality} now should be
	\begin{equation}
		\left|\int_{\Reals^n}\hat\integrand(u(x))\Div\boldsymbol{\xi}(x)\ud{x}\right|
			\le|\partial\Energy|(r, u)
					\left(|\psi(r)|^2\Perimeter^2(r)
						{}+\frac{\|\psi\|^2_{L^2((0, \infty), u\Perimeter)}}{\diffusion}\right)^\frac{1}{2}
					{}+|\psi(r)|\Perimeter'(r)
	\end{equation}
	Proceeding as in the proof of Theorem \ref{th:local_slope} now yields
	\begin{multline*}
		\left|\left(\frac{\Perimeter'(r)}{\Perimeter(r)}-\hat\integrand(u(r))\right)\Perimeter(r)\psi(r)
			{}+\int_{B_r}\frac{\nabla\hat\integrand(u(x))}{u(x)}\cdot\boldsymbol{\xi}(x)u(x)\ud{x}\right| \\
		\le|\partial\Energy|(r, u)\left(|\psi(r)|^2\Perimeter^2(r)
			{}+\frac{\|\boldsymbol{\xi}\|^2_{L^2(\Reals^n, u)}}{\diffusion}\right)^\frac{1}{2},
	\end{multline*}
	which, with a duality argument, gives one of the implications.
	
	The converse implication is also shown as in the proof of Theorem \ref{th:local_slope} but with the Cauchy inequality applied differently:
	\begin{equation}\begin{split}
		\Energy(s, w)-\Energy(r, u)
		&	\ge(\Perimeter(s)-\Perimeter(r))
				{}-\int_{\Reals^n}\hat\integrand(u(x))\Div(\boldsymbol{t}_u^w(x)-x)\ud{x} \\
		&	\ge\left(\frac{\Perimeter'(r)}{\Perimeter(r)}-\hat\integrand(u(r))\right)\left(s-r\right)\Perimeter(r)
				{}+\int_{\Reals^n}\nabla\hat\integrand(u(x))\cdot(\boldsymbol{t}_u^w(x)-x)\ud{x} \\
		&	\ge -\left(\left|\frac{n-1}{r}-\hat\integrand(u(r))\right|^2
				{}+\diffusion\left\|\frac{\nabla\hat\integrand(u)}{u}\right\|^2_{L^2(\Reals^n, u)}
				\right)^\frac{1}{2} \\
		&\hspace{1.5cm}		\left(|s-r|^2\Perimeter^2(r)+\frac{W_2^2(u, w)}{\diffusion}\right)^\frac{1}{2},
	\end{split}\end{equation}
	which, after taking limits, again concludes the proof.
\end{proof}

With expressions for the metric derivative and local slope at hand, the proof of Theorem \ref{th:weak_solution} can be repeated. In this new setting it will give
\begin{equation}
	\Perimeter(r(t))r'(t)=-\frac{n-1}{r(t)}+\hat\integrand(u(r(t), t))
\end{equation}
for almost all $t>0$, which means that \eqref{eq:weak_boundary} becomes
\begin{equation}\label{eq:weak_boundary_permeability}
	\int_0^\infty\int_{B_{r(t)}}\pdiff[\psi(x, t)]{t}\ud{x}\ud{t}
		=\int_0^\infty\left(\frac{n-1}{r(t)}-\hat\integrand(u(r(t), t))\right)\psi(r(t), t)\ud{t}.
\end{equation}
The equation for $u$ is, as expected, not changed.

\section*{Acknowledgements}
The author gratefully acknowledges the advice, insight and criticism of Prof. dr. Mark Peletier. Without his support, this paper would not have been written. The author would also like to thank Joost Hulshof, Georg Prokert and Matthias R\"oger for their criticism and interest during the writing of this paper. Finally, the author would like to thank the anonymous referee for pointing out some implications of \cite[Theorems 2.4.15 and 4.0.4]{Ambrosio_Gigli_Savare} and offering many useful suggestions.

\end{document}